\newtheorem{thm}{Theorem}[section]
\newtheorem{prop}[thm]{Proposition}
\newtheorem{lemma}[thm]{Lemma}
\newtheorem{cor}[thm]{Corollary}
\newtheorem{claim}[thm]{Claim}
\newtheorem{fact}[thm]{Fact}
\numberwithin{equation}{subsection}
\numberwithin{thm}{subsection}
\theoremstyle{definition}
\newtheorem{defn}[thm]{Definition}
\newtheorem{notation}[thm]{Notation}
\theoremstyle{remark}
\newtheorem{rmk}[thm]{Remark}
\newtheorem{example}[thm]{Example}
\DeclareMathAlphabet{\mathpzc}{OT1}{pzc}{m}{it}
\renewcommand{\H}{\mathbb{H}}
\newcommand{\C}{\mathbb{C}}
\newcommand{\R}{\mathbb{R}}
\newcommand{\Z}{\mathbb{Z}}
\newcommand{\Q}{\mathbb{Q}}
\newcommand{\N}{\mathbb{N}}
\newcommand{\V}{\mathbb{V}}
\newcommand{\nV}{V}
\newcommand{\K}{\mathbb{K}}
\newcommand{\bdry}{\partial}
\newcommand{\s}{\vskip.1in}
\newcommand{\n}{\noindent}
\newcommand{\E}{\mathbb{E}}
\newcommand{\be}{\begin{enumerate}}
\newcommand{\ee}{\end{enumerate}}
\newcommand{\op}{\operatorname}
\newcommand{\Kur}{\mathscr{K}}
\newcommand{\cb}{\color{black}}
\newcommand{\cbu}{\color{black}}
\begin{document}

\title[Equivariant Lagrangian Floer cohomology]
{Equivariant Lagrangian Floer cohomology via semi-global Kuranishi structures}

\author{Erkao Bao}
\address{Simons Center for Geometry and Physics, State University of New York, Stony Brook, NY 11790}
\email{baoerkao@gmail.com} 

\author{Ko Honda}
\address{University of California, Los Angeles, Los Angeles, CA 90095}
\email{honda@math.ucla.edu} \urladdr{http://www.math.ucla.edu/\char126 honda}

\date{This version: April 22, 2020.}

\keywords{symplectic structure, Floer homology}

\subjclass[2000]{Primary 57M50; Secondary 53D10,53D40.}

\begin{abstract}
Using a simplified version of Kuranishi perturbation theory that we call {\em semi-global Kuranishi structures,} we give a definition of the equivariant Lagrangian Floer cohomology of a pair of Lagrangian submanifolds that are fixed under a finite symplectic group action and satisfy certain simplifying assumptions.
\end{abstract}

\maketitle

\setcounter{tocdepth}{1}
\tableofcontents

\section{Introduction} \label{section: introduction}

Let $G$ be a finite group. The equivariant Lagrangian Floer cohomology for a pair of Lagrangians fixed under a symplectic $G$-action was first defined and studied in \cite{KS} and later in \cite{SS, He1, He2, He3, HLS}. One of the main difficulties in defining such a theory is achieving transversality of the moduli spaces of $J$-holomorphic curves using an equivariant almost complex structure $J$. Indeed, there are obstructions to the existence of equivariant regular almost complex structures; see \cite{KS, SS}. The paper \cite{HLS} uses an infinite family of non-equivariant regular almost complex structures and an algebraic approach to define equivariant cohomology.

The goal of this paper is to give an alternate definition of equivariant Lagrangian Floer cohomology using an equivariant almost complex structure $J$ that is not necessarily regular.   This involves constructing an equivariant version of a semi-global Kuranishi structure, which is a simplified version of the Kuranishi structures of \cite{FOn, FO3} used in \cite{BH2}; compare to \cite{MW} for the Kuranishi atlas formulation. It is worth mentioning that there is a construction of equivariant Kuranishi charts in \cite{Fu} in a more general situation via a quite different approach. 

\s
Let $(M,\omega)$ be a compact symplectic manifold of dimension $2n$, and let $L_0$ and $L_1$ be oriented Lagrangian submanifolds of $M$ that intersect transversely. Suppose $G$ acts on $(M,\omega)$ symplectically and satisfies $g(L_i)=L_i$ for all $g\in G$ and $i=0,1$; and that $G$ fixes the orientations of $L_i$.

\cbu
We make the following simplifying assumption:
\begin{enumerate}[label = \textbf{(S)}]
\item \label{condition: S}the maps $\pi_2(M)\stackrel{\int \omega}\to \R$ and $\pi_2(M,L_i)\stackrel{\int\omega}\to \R$ for $i=0,1$ have image $0$.
\end{enumerate}
More informally, \ref{condition: S} says that for all almost complex structures we consider we want to avoid disk and sphere bubbles.  

We also assume that either $M$ is closed or $M$ has {\em contact type boundary}, i.e., on a neighborhood of $\bdry M$ there exists a $1$-form $\sigma$ such that $\omega=d\sigma$ and the vector field $X$ defined by $\iota_X\omega=\sigma$ is positively transverse to $\bdry M$. Note that because $\omega$ is $G$-invariant, 
by averaging $\sigma$ over $G$, we can take $\sigma$ to be $G$-invariant.

The time-$r$ flow $\phi^r$ of $X$ gives a diffeomorphism $\Phi$ from $(-\epsilon, 0]\times\partial M$ to a neighborhood of $\partial M$ defined by $(r,m) \mapsto \phi^r(m)$. 
Since $$\mathcal L_X \sigma = d \iota_X \sigma + \iota_X d \sigma = d \iota_X \iota_X \omega + \iota_X \omega = \sigma,$$
we have $(\phi^r)^* \sigma = e^r \sigma$. Setting $\alpha = \sigma|_{\partial M}$, we obtain $\Phi^* \sigma = e^r \alpha$. Since $\alpha \wedge (d\alpha)^{n-1} = (\iota_X \omega^{n})|_{\partial M}$ and $X$ is transverse to $\partial M$, 
$\alpha \wedge (d\alpha)^{n-1}$ is a volume form on $\partial M$, and hence $\alpha$ is a contact form. 
We denote by $\xi=\ker \alpha$ the contact structure and $R_\alpha$ the Reeb vector field of $\alpha$.

Let $J$ be a $G$-invariant, $\omega$-compatible almost complex structure on $M$, i.e., $\omega(\cdot, J\cdot)$ is a $G$-invariant Riemannian metric. Near $\partial M$ we assume that $J$ is convex at the boundary.  More specifically:
\begin{enumerate} [label = \textbf{(J)}]
\item \label{condition: J} on the collar neighborhood $( (-\epsilon,0]\times \bdry M,e^r\alpha)$, $J$ is compatible with $d \alpha$ and maps $\xi$ to $\xi$ and $\partial_r$ to the Reeb vector field $R_\alpha$ of $\alpha$.
\end{enumerate}

\s\n
{\em Exact case.} One special case for which \ref{condition: S} holds is:
\begin{itemize}
\item $(M,\omega=d\sigma)$ is a {\em Liouville domain}, i.e., $M$ is compact and the Liouville vector field $X$ defined by $\iota_X\omega=\sigma$ points out of $\partial M$;
\item $L_0$ and $L_1$ are compact {\em exact} Lagrangians in $M$ with Legendrian boundary, where exactness means that $\sigma|_{L_i}$ is an exact $1$-form on $L_i$ for $i=0,1$.
\end{itemize}
\cb

To orient the relevant moduli spaces of $J$-holomorphic strips, following \cite[Section 8.1]{FO3} we assume that:
\begin{enumerate}[label = \textbf{(O)}]
\item \label{condition: O} the pair $(L_0,L_1)$ is equipped with a relative spin structure which is preserved by $G$.
\end{enumerate}
\cbu
See Section~\ref{section: orientations} for more details on the auxiliary orientation data including relative spin structures,
and Section~\ref{relative spin under G} for the notion of a $G$-invariant relative spin structure. \cb
 In particular we assume that $L_0$ and $L_1$ are oriented.  (See Seidel~\cite{Se} for orientations using Pin structures and Solomon~\cite{So} for orientations using relative Pin structures.)

\cbu
We denote by $R$ the Novikov ring 
$$\Big\{\sum_{i=0}^\infty a_i T^{\lambda_i} ~|~  a_i \in \Z, \lambda_i \in \R^{\geq 0}, \lambda_0 = 0 ~~~\text{ and }  \lim_{i\to \infty} \lambda_i = \infty \Big\},$$
where $T$ is a formal parameter. \cb

The Lagrangian Floer cochain complex $CF^{\bullet}(L_0,L_1)$ of the pair $(L_0,L_1)$ is the free module over the coefficient ring $R$ generated by $L_0\cap L_1$ with differential $d$ whose definition we give below.

\cbu For $p,q\in L_0\cap L_1$, let $\pi_2(p,q)$ be the set of homotopy classes of continuous maps $u:[0,1]\times[0,1]\to M$ with boundary conditions $u(0,t)=q$, $u(1,t)=p$, $u(s,0)\in L_0$, $u(s,1)\in L_1$. \cb
Let $\widetilde{\mathcal M}_J(p,q;A)$, where $A\in \pi_2(p,q)$, be the space of smooth maps $u:\mathbb R \times [0,1] \to M$ that satisfy:
\begin{enumerate}[label = (A\arabic*)]
\item \label{A1} $\overline{\bdry}_J u:=u_s+J(u)u_t=0$;
\item \label{A2} $u|_{\R \times \{i\}} \subseteq L_i$ for $i\in \{0,1\}$;
\item \label{A3} $\displaystyle \lim_{s\to -\infty}u(s,t)=q$ and $\displaystyle\lim_{s\to +\infty} u(s,t)=p$; and
\item \label{A4} $[u]=A$.
\end{enumerate}
Note that $\R$ acts on $\widetilde{\mathcal M}_J(p,q;A)$ by translation in the domain and we denote
$$\mathcal M_J(p,q;A):=\widetilde{\mathcal M}_J(p,q;A)/\R.$$
We also denote the virtual (= expected) dimension of $\mathcal{M}_J(p,q;A)$ by $\op{vdim}\mathcal{M}_J(p,q;A)$.

\begin{notation}
We use the notation $\overline{\mathcal{M}}_J(p,q;A)$ to mean the space of possibly broken strips from $p$ to $q$ in the class $A$ and
$$\bdry \mathcal{M}_J(p,q;A)=\overline{\mathcal{M}}_J(p,q;A)-\mathcal{M}_J(p,q;A).$$
\end{notation}

Suppose for the moment that $J$ is regular. Then we have the differential
\begin{gather*}
d: CF^{\bullet}(L_0,L_1)\to CF^{\bullet}(L_0,L_1)\\
d[p]=\sum_{q\in L_0\cap L_1, A\in \pi_2(p,q)}\# \mathcal M_J(p,q;A) \cdot T^{\int_A\omega} [q],
\end{gather*}
where $\# \mathcal M_J(p,q;A)=0$ if $\op{vdim} \mathcal M_J(p,q;A) \neq 0$. \cbu
Note that for each $\lambda \geq 0$, the number of $(q, A)$ such that $\omega(A) \leq \lambda$ and  $\mathcal M_J(p,q;A) \neq \emptyset$ is finite. \cb
Then as usual one shows that $d^2 = 0$ and defines the usual Lagrangian Floer homology by $HF^{\bullet}(L_0,L_1) := \ker d / \op{Im} d$.

We recall the definition of equivariant cohomology of a space $Y$ with a $G$-action. Let $BG$ be the classifying space of $G$ and let $EG$ be the universal bundle over $BG$. The diagonal action of $G$ on $EG \times Y$ is free and the quotient is denoted by $EG \times_G Y$. The $G$-equivariant cohomology of $Y$ with coefficient ring $R$ is defined to be $H^{\bullet}(EG\times _G Y; R)$. Let $C_{\bullet}(A)$ be the singular chain complex of the space $A$ over $R$ and $C^{\bullet}(A) = \op{Hom}_{R}(C_{\bullet}(A), R)$ be the singular cochain complex of $A$. Since the singular chain complexes and cochain complexes of $EG$ and $Y$ are invariant under the $G$-action and their boundary maps are $G$-equivariant, they can be viewed as complexes over the group ring $R[G]$. Then we have
\begin{align*}
H^{k}(EG\times _G Y; R) & \cong H^k(\op{Hom}_{R}(C_\bullet(EG\times _G Y), R)) \\
 & \cong H^k(\op{Hom}_{R}(C_\bullet(EG)\otimes _{R[G]} C_\bullet(Y), R)) \\
 & \cong H^k(\op{Hom}_{R[G]}(C_\bullet(EG), C^\bullet(Y))).
\end{align*}
In the second and third lines we are taking the $k$-th cohomology of the {\em total complex} of a double complex.  We are also viewing $C_\bullet (EG)$ as a complex of right $R[G]$-modules and $C_\bullet (Y)$ as a complex of left $R[G]$-modules. \cbu
We can also take a smaller model for $C_\bullet (EG)$: The projective resolution $P_\bullet$ of $R$ over $R[G]$ is chain homotopic to $C_\bullet (EG)$ and 
\begin{equation} \label{eqn: def of equiv}
H^{k}(EG\times _G Y; R) \cong H^k(\op{Hom}_{R[G]}(P_\bullet, C^\bullet(Y))).
\end{equation}\cb

Returning to the \cbu ``usual" definition of equivariant Lagrangian Floer cohomology assuming $J$ is regular, \cb we  replace $C^{\bullet}(Y)$ by $CF^{\bullet}(L_0, L_1)$ in Equation~\eqref{eqn: def of equiv}.
More precisely, since $J$ is invariant under the $G$-action, we have
$$\# \mathcal M_J(p,q;A)=\# \mathcal M_J(g(p),g(q); g(A))$$
for all $g\in G$. Hence  $d$ is a $R[G]$-linear map on $CF^{\bullet}(L_0, L_1)$.
We can then define the $G$-equivariant Lagrangian Floer cohomology group $HF^{\bullet}_G(L_0,L_1)$ as the cohomology of the total complex of \cbu $\op{Hom}_{R[G]}(P_{\bullet}, CF^{\bullet}(L_0, L_1))$. \cb

\begin{example}\cite{FO}
Let $f: L_0 \to \R$ be a $G$-equivariant Morse function and let $L_1$ be $\op{graph} (\epsilon \cdot df) \subset T^*L_0$ for some small $\epsilon>0$. Then $HF^{\bullet}_G(L_0, L_1) \cong H^{\bullet}_G(L_0; R)$.
\end{example}

In general, a \cbu $G$-invariant $J$ \cb is not regular and the moduli space $\mathcal M_J(p,q;A)$ is not transversely cut out.  The main \cbu contribution \cb of this paper is to obtain a $G$-equivariant cochain complex $CF^{\bullet}(L_0, L_1)$ when $J$ is not regular by constructing an equivariant version of a semi-global Kuranishi structure, initially developed in \cite{BH2} for contact homology.  The equivariant semi-global Kuranishi structure comes with a section ${\mathfrak s}$, and while the Kuranishi structure itself is $G$-equivariant, the section is not. This creates some difficulties, but interestingly enough there is a perturbed count of $\#\mathcal M_J(p,q;A)$ that still remains $G$-invariant (cf.\ Theorem~\ref{thm: equivariant count}).

Our main theorem is therefore the following:

\begin{thm}[Equivariant Lagrangian Floer cohomology] \label{thm: main}
Suppose $G$ acts on $(M,\omega)$ symplectically and for each $i=0,1$, $L_i$ is oriented, $g(L_i)=L_i$, for each $g\in G$, and $G$ fixes the orientation of $L_i$. \cbu
If \ref{condition: S} and \ref{condition: O} hold, then there exists an $R$-module $HF_{G}^{\bullet}(L_0,L_1)$ which is an invariant of $(L_0,L_1)$ under $G$-equivariant Hamiltonian isotopy. 
Moreover, when there exists a regular $G$-invariant $\omega$-compatible almost complex structure on $M$ satisfying \ref{condition: J}, the usual definition of equivariant Lagrangian Floer cohomology can be made and agrees with $HF_{G}^{\bullet}(L_0,L_1)$.
\cb
\end{thm}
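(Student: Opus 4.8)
The plan is to build the invariant $HF_G^\bullet(L_0,L_1)$ in two stages: first construct, for a fixed $G$-invariant $\omega$-compatible $J$ satisfying \ref{condition: J}, an equivariant semi-global Kuranishi structure on the compactified moduli spaces $\overline{\mathcal M}_J(p,q;A)$, and then extract from it a $G$-equivariant cochain complex structure on $CF^\bullet(L_0,L_1)$; second, prove independence of all choices (including $J$) up to $G$-equivariant chain homotopy equivalence and promote this to invariance under $G$-equivariant Hamiltonian isotopy. For the first stage, I would follow the semi-global Kuranishi formalism of \cite{BH2}: cover $\overline{\mathcal M}_J(p,q;A)$ by finitely many Kuranishi charts obtained from obstruction bundles over thickened moduli spaces (using exponential-decay gluing profiles near broken strips), arrange that $G$ permutes the charts compatibly — which is automatic because $J$ is $G$-invariant, so $g$ carries a solution of \ref{A1}--\ref{A4} for $(p,q,A)$ to one for $(g(p),g(q),g(A))$ — and then choose a multisection (or single section) $\mathfrak s$ perturbing the Cauchy--Riemann operator to transversality. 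The key subtlety, flagged already in the introduction, is that one cannot in general choose $\mathfrak s$ to be $G$-equivariant; instead one chooses $\mathfrak s$ freely (transverse, compatible with the boundary stratification $\bdry\mathcal M_J(p,q;A)$ via the gluing maps), and then argues that the signed count $\#\mathcal M_J(p,q;A)$ of the zero set is nevertheless $G$-invariant. This is Theorem~\ref{thm: equivariant count} in the paper, whose proof I would take as a black box here; granting it, $d$ is $R[G]$-linear, $d^2=0$ follows from the usual codimension-one boundary analysis of $\bdry\mathcal M_J(p,q;A)$ (the square-zero relation is a statement about counts, hence unaffected by non-equivariance of $\mathfrak s$), and $HF_G^\bullet(L_0,L_1)$ is defined as the cohomology of the total complex of $\op{Hom}_{R[G]}(P_\bullet, CF^\bullet(L_0,L_1))$ as in Equation~\eqref{eqn: def of equiv}.

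For the invariance statement, the standard Floer-theoretic strategy applies, carried out at the level of $R[G]$-complexes. Given two admissible choices $(J^0,\mathfrak s^0)$ and $(J^1,\mathfrak s^1)$, and more generally a $G$-equivariant Hamiltonian isotopy relating $(L_0,L_1)$ to $(L_0',L_1')$, I would construct a continuation chain map by counting (perturbed) solutions of a parametrized Cauchy--Riemann equation on $\R\times[0,1]$ with an $s$-dependent, eventually-constant family of data interpolating between the two ends; the $G$-equivariance of the chosen interpolating family of almost complex structures again makes the relevant moduli spaces $G$-equivariant as spaces, and the same count-invariance phenomenon makes the induced map on $CF^\bullet$ an $R[G]$-linear chain map. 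A second continuation map in the reverse direction, together with a chain homotopy built from a two-parameter family (the ``homotopy of homotopies''), shows the two continuation maps are mutually inverse up to $R[G]$-linear chain homotopy. Applying the functor $\op{Hom}_{R[G]}(P_\bullet,-)$ and passing to the total complex then yields a canonical isomorphism on $HF_G^\bullet$, establishing the claimed invariance. Here I would reuse the equivariant semi-global Kuranishi package for these parametrized moduli spaces essentially verbatim, since the formalism of \cite{BH2} already accommodates continuation-type moduli spaces.

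For the final clause — compatibility with the ``usual'' definition when a regular $G$-invariant $J$ satisfying \ref{condition: J} exists — the argument is that when $J$ is regular, every thickened chart can be taken to have trivial obstruction bundle, so one may take $\mathfrak s$ to be the zero perturbation, which \emph{is} $G$-equivariant; the resulting $R[G]$-complex $CF^\bullet(L_0,L_1)$ with differential $d[p]=\sum \#\mathcal M_J(p,q;A)\, T^{\int_A\omega}[q]$ is exactly the one used in the classical definition recalled before Theorem~\ref{thm: main}. Combined with the independence-of-choices result from the second stage — which lets one compare this regular-$J$ model with the general construction through a chain of $G$-equivariant chain homotopy equivalences — one concludes that $H^k(\op{Hom}_{R[G]}(P_\bullet, CF^\bullet(L_0,L_1)))$ computed from the regular model agrees with $HF_G^\bullet(L_0,L_1)$. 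I expect the main obstacle to be the first stage: making the equivariant semi-global Kuranishi structure genuinely functorial under $G$ — so that $g$ identifies charts, gluing profiles, and boundary identifications on the nose rather than up to unspecified homotopy — while still having enough freedom in $\mathfrak s$ to achieve transversality and to make the $s$-independent and parametrized constructions mutually compatible; all the count-invariance miracles rest on this bookkeeping being set up correctly, and it is exactly the point where the non-equivariance of $\mathfrak s$ interacts most delicately with the equivariance of everything else.
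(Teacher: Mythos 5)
Your outline follows the paper's own architecture almost exactly: an equivariant semi-global Kuranishi structure for a $G$-invariant $J$ with a non-equivariant section $\mathfrak s$, Theorem~\ref{thm: equivariant count} as the engine making the perturbed counts $G$-invariant, the total complex of $\op{Hom}_{R[G]}(P_\bullet, CF^\bullet(L_0,L_1))$ as the definition, continuation maps and homotopies of homotopies carried out $R[G]$-linearly for invariance, and the observation that the regular-$J$ case reduces to the zero perturbation. All of this matches Sections~\ref{section: equivariant semi-global Kuranishi structure}, \ref{section: equivariance}, and \ref{Equivariant Lagrangian Floer cohomology}.

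There is, however, one genuine gap: you never use hypothesis \ref{condition: O}, and correspondingly you never address orientations. Your claim that count-invariance alone makes $d$ an $R[G]$-linear map presupposes that the naive action $[p]\mapsto [gp]$ on $CF^\bullet(L_0,L_1)$ commutes with $d$, i.e.\ that $g$ carries the coherent orientation of $\mathcal M(p,q;A)$ to that of $\mathcal M(gp,gq;gA)$ on the nose. This is false in general: even with a $G$-preserved relative spin structure, $g$ only preserves the coherent orientations up to a sign $\sigma(p,g)\sigma(q,g)$ determined by comparing the pushed-forward capping data at $p$ with the chosen capping data at $gp$ (Section~\ref{relative spin under G}). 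The paper therefore defines the $G$-action on $CF^\bullet(L_0,L_1)$ by the twisted formula $g[p]=\sigma(p,g)[gp]$ and verifies $dg=gd$ using the identity $g_{\#}\mathfrak o(D^u)=\sigma(p,g)\sigma(q,g)\mathfrak o(D^{gu})$ together with the count-invariance theorem; without this sign analysis the module structure over $R[G]$ — and hence the very definition of $\op{Hom}_{R[G]}(P_\bullet, CF^\bullet)$ — is not established. You should add this step (and note that it is exactly where \ref{condition: O}, the $G$-invariance of the relative spin structure, enters), as well as the companion fact that the fibers $e^{r,\ell}$ of the obstruction bundles carry canonical $G$-invariant orientations when $\ell$ is an even multiple of $n$, which is needed for the signed counts $\#\mathcal Z(\Kur(p,q;A),\mathfrak S)$ to be well-defined integers in the first place. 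The rest of your argument is sound and coincides with the paper's.
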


If we want to equip the Lagrangian Floer homology groups with a $\Z$-grading, we assume that $(L_0, L_1)$ is a $G$-equivariant graded Lagrangian pair; see Section ~\ref{subsection: grading} for details.

The definition of $HF_G^{\bullet}(L_0,L_1)$ is given in Section~\ref{subsection: chain complex} and its invariance under $G$-equivariant Hamiltonian isotopy is given in Section~\ref{subsection: chain homotopy}. Most of the work is devoted to the construction of the semi-global Kuranishi structure in Section~\ref{section: equivariant semi-global Kuranishi structure} and the equivariance of the curve count in Section~\ref{section: equivariance}. The agreement with the usual definition for regular $J$ is automatic.

\s\n
{\em Acknowledgements.} We thank Kristen Hendricks, Robert Lipshitz, and Sucharit Sarkar for explaining to us their approach to equivariant Lagrangian Floer cohomology in \cite{HLS}. The first author thanks Garrett Alston and Cecilia Karlsson for discussions on orientations and Vincent Colin for providing him a great visiting opportunity at the Lebesgue Center of Mathematics and the Universit\'e de Nantes, where part of this work was carried out. The first author also thanks the Simons Center for Geometry and Physics, where he worked on this paper.

\section{Equivariant semi-global Kuranishi structure} \label{section: equivariant semi-global Kuranishi structure}

The construction of the equivariant semi-global Kuranishi structure follows the same steps as that of \cite{BH2}. The only differences are that (i) we consider sections, not multisections, and (ii) we pay attention to $G$-equivariance.

\subsection{$G$-invariant almost complex structure}
The following lemma is well-known.
\begin{lemma}\label{lemma: J}
\cbu There exists an almost complex structure $J$ which is $\omega$-compatible, $G$-invariant, and satisfies \ref{condition: J} if $\bdry M\not=\varnothing$.\cb
\end{lemma}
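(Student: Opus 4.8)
The plan is to construct $J$ by first building a $G$-invariant $\omega$-compatible almost complex structure on a collar neighborhood of $\partial M$ that satisfies \ref{condition: J}, and then extending it to all of $M$ while preserving $\omega$-compatibility and $G$-invariance; the key input is the contractibility of the space of $\omega$-compatible almost complex structures together with an averaging argument over the finite group $G$. Concretely, on the collar $((-\epsilon,0]\times\partial M, e^r\alpha)$ we already have, from the discussion preceding the lemma, a $G$-invariant contact form $\alpha$ on $\partial M$ with Reeb vector field $R_\alpha$ and contact structure $\xi=\ker\alpha$, all $G$-invariant because $\sigma$ (hence $\alpha$) was taken $G$-invariant. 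I would choose a $d\alpha$-compatible complex structure $J_\xi$ on the symplectic vector bundle $(\xi, d\alpha)$ over $\partial M$ that is $G$-invariant: pick any $d\alpha$-compatible $J_\xi'$ and average the associated metric $g' = d\alpha(\cdot, J_\xi'\cdot)$ over $G$, then run the standard polar-decomposition construction (which produces a compatible complex structure from a symplectic form and a metric and is canonical, hence equivariant) to obtain a $G$-invariant $J_\xi$. Define $J$ on the collar by $J\partial_r = R_\alpha$, $J R_\alpha = -\partial_r$, and $J|_\xi = J_\xi$; one checks this is $\omega$-compatible for $\omega = d(e^r\alpha)$ on the collar and is manifestly $G$-invariant and satisfies \ref{condition: J}.

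The next step is the extension. Shrink the collar slightly so that $J$ is defined and $G$-invariant on $U = (-\epsilon,-\epsilon/2]\times\partial M$ say, and choose any $G$-invariant $\omega$-compatible almost complex structure $J_0$ on all of $M$ (this exists by the same averaging-plus-polar-decomposition trick applied to a $G$-invariant metric on $M$, which exists because $G$ is finite). The space $\mathcal{J}(M,\omega)$ of $\omega$-compatible almost complex structures is contractible, and more to the point it is the space of sections of a bundle over $M$ whose fibers are each contractible (indeed each fiber is $Sp(2n)/U(n)$, which is contractible). I would interpolate between $J$ near $\partial M$ and $J_0$ away from it: the obstruction to interpolating sections of a bundle with contractible fibers vanishes, so there is an $\omega$-compatible $J_1$ on $M$ agreeing with the collar $J$ near $\partial M$. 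Concretely this can be done fiberwise using the convex structure on each fiber coming from the identification with a space of compatible inner products, or via the explicit retraction in the polar-decomposition construction, using a cutoff function $\rho$ that is $1$ near $\partial M$ and supported in the collar. Then average the resulting $J_1$ over $G$ once more — but here is the subtlety: averaging almost complex structures does not in general yield an almost complex structure, so the averaging must be done at the level of the associated metrics, i.e. set $g = \tfrac{1}{|G|}\sum_{g\in G} g^* g_{J_1}$ and let $J$ be the polar-decomposition-compatible structure for $(\omega, g)$. Since $J_1$ already equals the $G$-invariant collar $J$ near $\partial M$, each $g^* g_{J_1}$ agrees with $g_{J_1}$ there, so $g = g_{J_1}$ near $\partial M$ and the polar decomposition returns the original collar $J$; thus the final $J$ is $G$-invariant, $\omega$-compatible, and still satisfies \ref{condition: J}.

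The main obstacle — really the only place requiring care — is the bookkeeping that the $G$-averaging of metrics does not disturb condition \ref{condition: J} near the boundary, combined with the fact that one cannot average almost complex structures directly. The clean way to organize this is to do \emph{all} the averaging at the level of Riemannian metrics and use the canonical (hence $G$-equivariant and boundary-respecting, when the metric is already of the right form there) construction $(\omega, g) \mapsto J_{\omega,g}$; then $G$-invariance of $g$ and the normalization of $g$ on the collar immediately give $G$-invariance of $J$ and condition \ref{condition: J}. Everything else — existence of a $G$-invariant metric on a manifold with a finite group action, contractibility of fibers of the compatible-almost-complex-structure bundle, partition-of-unity interpolation — is standard. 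If $\partial M = \varnothing$ the statement is even simpler: just average a metric and apply the construction, and no boundary condition is in play, which is why the lemma's hypothesis only invokes \ref{condition: J} in the case $\partial M \neq \varnothing$.
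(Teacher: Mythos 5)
Your proposal is correct and, after the detour through collar-construction plus interpolation, lands on exactly the paper's argument: do all averaging at the level of Riemannian metrics over the finite group $G$ and apply the canonical polar-decomposition construction $(\omega,g)\mapsto J$, with $G$-invariance and the boundary condition \ref{condition: J} both read off from the metric. The paper accomplishes this in a single step by prescribing the metric on the collar from the outset (condition \ref{condition: orthogonal}: $\partial_r$, $R_\alpha$, $\xi$ mutually orthogonal with $|\partial_r|=|R_\alpha|=e^{r/2}$), a normalization that survives averaging because $G$ preserves $\partial_r$, $R_\alpha$, and $\xi$, and that forces $J\partial_r=R_\alpha$ and $J\xi=\xi$ under polar decomposition — so no separate interpolation or second averaging is needed.
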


\begin{proof}
\cbu If $\bdry M\not=\varnothing$, then on the collar neighborhood $U=(-\epsilon,0]\times \bdry M$, $\partial_r$, $R_\alpha$, and $\xi$ are preserved by $G$.  Choose a Riemannian metric $\hat g$ on $M$ such that 
\begin{enumerate}[label=\textbf{($\star$)}]
\item \label{condition: orthogonal} $\partial_r$, $R_\alpha$, and $\xi$ are mutually orthogonal on $U$ and $\bdry_r$ and $R_\alpha$ have length $e^{r/2}$.
\end{enumerate}
\cb Let $g$ be the average of $\hat g$ under the group action $G$. Then $g$ is preserved by $G$ and \ref{condition: orthogonal} holds.

From $\omega$ and $g$, we obtain the canonical $\omega$-compatible almost complex structure $J$ on $M$ by the usual polar decomposition argument; see for example \cite[Proposition 12.3]{Si} and \cite[Proposition 2.50]{MS1}.
More precisely, we define $A:TM\to TM$ by $\omega(u,v)=g(Au,v)$ and the almost complex structure $J$ by $J=(\sqrt{A^*A})^{-1}A$, where $A^*$ is the $g$-adjoint of $A$. It is not hard to check that $J$ is $\omega$-compatible and $G$-invariant and that $J$ maps $\partial _r \mapsto R_\alpha$ and $J\xi =\xi$. Hence \ref{condition: J} is satisfied if $\bdry M\not=\varnothing$.
\end{proof}

\begin{lemma}\label{lemma: family J}
Given almost complex structures $J^0$ and $J^1$ that are $\omega$-compatible, $G$-invariant, and satisfy \ref{condition: J} if $\bdry M\not=\varnothing$, there exists a $1$-parameter family of almost complex structures $\{J^\tau\}_{\tau \in [0,1]}$ connecting $J^0$ and $J^1$ such that for each $\tau\in[0,1]$, $J^\tau$ is $\omega$-compatible, $G$-invariant, and satisfies \ref{condition: J} if $\bdry M\not=\varnothing$.
\end{lemma}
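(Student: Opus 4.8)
\emph{Proof proposal.} The plan is the standard ``straighten the path downstairs, then project back up'' argument used to prove contractibility of the space of $\omega$-compatible almost complex structures, adapted to keep track of $G$-invariance and of \ref{condition: J}: I interpolate linearly on the level of Riemannian metrics, where convexity is available, and then recover a family of almost complex structures via the polar decomposition used in the proof of Lemma~\ref{lemma: J}.

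First, for $i=0,1$ set $g^i:=\omega(\cdot,J^i\cdot)$. Since $J^i$ is $\omega$-compatible and $G$-invariant, each $g^i$ is a $G$-invariant Riemannian metric, and since $g^i$ arises from a compatible almost complex structure, the polar decomposition recovers it: $A_{g^i}=J^i$, hence $(\sqrt{A_{g^i}^*A_{g^i}})^{-1}A_{g^i}=J^i$. When $\bdry M\neq\varnothing$, a direct computation on the collar $U=(-\epsilon,0]\times\bdry M$ using $\omega=e^r(dr\wedge\alpha+d\alpha)$ together with \ref{condition: J} (which gives $J^i\partial_r=R_\alpha$ and $J^i\xi=\xi$) shows that $g^i(\partial_r,\partial_r)=g^i(R_\alpha,R_\alpha)=e^r$ and that $\partial_r$, $R_\alpha$, $\xi$ are mutually $g^i$-orthogonal; in other words $g^i$ satisfies \ref{condition: orthogonal}.

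Next, put $g^\tau:=(1-\tau)g^0+\tau g^1$ for $\tau\in[0,1]$. Each $g^\tau$ is a Riemannian metric, being a convex combination of positive-definite symmetric $2$-tensors; it is $G$-invariant, being a convex combination of $G$-invariant tensors; and, when $\bdry M\neq\varnothing$, it satisfies \ref{condition: orthogonal} on $U$, because the orthogonality relations hold for both $g^0$ and $g^1$ with respect to the fixed splitting $TM|_U=\langle\partial_r\rangle\oplus\langle R_\alpha\rangle\oplus\xi$ and the length normalizations $|\partial_r|^2=|R_\alpha|^2=e^r$ are affine in the metric. Now apply the polar decomposition construction of the proof of Lemma~\ref{lemma: J} to $g^\tau$ to obtain $J^\tau$. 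For every $\tau$, exactly as in that proof, $J^\tau$ is $\omega$-compatible and $G$-invariant, and it satisfies \ref{condition: J} on $U$ when $\bdry M\neq\varnothing$. The assignment $\tau\mapsto J^\tau$ is smooth, since $\tau\mapsto g^\tau$ is affine, $g\mapsto A_g$ (defined by $\omega(u,v)=g(A_gu,v)$) is linear, and $A\mapsto(\sqrt{A^*A})^{-1}A$ is smooth on invertible endomorphisms. Finally, by the first paragraph the construction applied to $g^i$ returns $J^i$, so $J^0$ and $J^1$ are indeed the endpoints of the family.

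The argument has no genuine obstacle; the only points requiring care are (i) verifying that \ref{condition: J} for $J^i$ translates into the \emph{affine} condition \ref{condition: orthogonal} for $g^i$ on the collar, so that it is preserved under the convex combination, and (ii) recalling that the polar decomposition returns $J$ unchanged when it is applied to the metric $\omega(\cdot,J\cdot)$ of a compatible $J$, which is what pins down the endpoints of $\{J^\tau\}$. Both are routine verifications and are essentially already contained in the proof of Lemma~\ref{lemma: J}.
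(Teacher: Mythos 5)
Your proposal is correct and follows essentially the same route as the paper: define $g^i=\omega(\cdot,J^i\cdot)$, connect them by a path of $G$-invariant metrics satisfying \ref{condition: orthogonal}, and apply the polar decomposition of Lemma~\ref{lemma: J}. You merely make explicit what the paper leaves as ``not hard to see'' — namely that linear interpolation works because \ref{condition: orthogonal} is an affine condition on the metric, and that the polar decomposition returns $J^i$ at the endpoints — both of which you verify correctly.
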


\begin{proof}
Define the metrics $g^i (\cdot, \cdot) := \omega(\cdot, J^i \cdot)$ for $i\in \{0,1\}$.
We can connect $g^0$ and $g^1$ by a $1$-parameter family of $G$-invariant metrics $\{g^\tau\}_{\tau\in [0,1]}$. It is not hard to see that we can take the $g^\tau$ so that \ref{condition: orthogonal} holds for each $\tau\in[0,1]$. Then we can define $\{J^\tau\}_{\tau \in [0,1]}$ as in the proof of Lemma~\ref{lemma: J}.
\end{proof}

From now on we assume $\omega$, $J$, and $g$ are compatible and $G$-invariant. \cbu
It is easy to check that we can further choose $J$ such that for any $p\in L_0 \cap L_1$, $J (T_p L_0) = T_p L_1$.
In later calculations, we implicitly use an identification of $(T_pM, J)$ with $(\R^n \oplus i\R^n,i)$ that maps $T_p L_0$ to the $\R^n$ factor and $T_p L_1$ to the $i\R^n$ factor.
\cb

\subsection{Fredholm setup}

Let $S=\R\times [0,1]$ with coordinates $(s,t)$ and the standard complex structure $j$ which maps $\bdry_s\mapsto \bdry_t$.  Let $p,q\in L_0\cap L_1$.\footnote{We will be using $p$ for both a point in $L_0\cap L_1$ and the $L^p$-exponent. Hopefully this will not create any confusion.}

For $k\geq 2$, let $\mathcal B^{k+1,p}=\mathcal B^{k+1,p}(p,q;A)$ be the space of maps $u:S\to M$ in $W^{k+1,p}(S, M)$ satisfying \hyperlink{A2}{(A2)}--\hyperlink{A4}{(A4)} and such that there exist $\rho_+,\rho_-\in \R$, $\xi_+\in W^{k+1,p}(S,T_pM)$, and $\xi_-\in W^{k+1,p}(S,T_qM)$ for which
\begin{itemize}
\item $u(s,t)=\exp_p\xi_+(s,t)$ for $s\geq\rho_+$,
\item $u(s,t)=\exp_q \xi_-(s,t)$ for $s\leq \rho_-$.
\end{itemize}
Here the exponential map $\exp$ is taken with respect to the $G$-invariant $g$.  Let
$$\pi:\mathcal E^{k,p}=\mathcal{E}^{k,p}(p,q;A)\to\mathcal B^{k+1,p}$$
be the smooth Banach bundle with fiber
$$\mathcal E^{k,p}_u=\pi^{-1}(u)=W^{k,p}(S, \wedge ^{0,1} S\otimes_J u^{*}TM).$$
Then
\begin{gather*}
\overline{\partial}_J: \mathcal B^{k+1,p} \to \mathcal E^{k,p}, \quad u\mapsto (u_s+J(u)u_t)\otimes_J (ds-idt)
\end{gather*}
is a Fredholm section and $\overline{\partial}_J^{-1}(0)=\widetilde{\mathcal M}_J(p,q)$.

Let $\nabla$ be the Levi-Civita connection on $M$ with respect to $g$. Let $D_u$ be the differential
$$(\overline \partial_J)_*: T_u \mathcal B^{k+1,p} \to T_{(u,\overline{\partial}_Ju)} \mathcal E^{k,p}$$
postcomposed with the projection to $\mathcal E^{k,p}_{(u,\overline \partial_J u)}$.  Let us write $\mathcal{W}^{k+1,p} (S, u^* TM)$ for $\xi\in W^{k+1,p} (S, u^* TM)$ satisfying $\xi(s,0)\in T_{u(s,0)}L_0$ and $\xi(s,1)\in T_{u(s,1)}L_1$. Then, by \cite[Proposition~3.1.1]{MS2},
$$D_u: \mathcal{W}^{k+1,p} (S, u^* TM)\to W^{k,p} (S, \wedge^{0,1}S\otimes _J u^* TM)$$
is given by
\begin{align} \label{formula: D}
D_u \xi & = \tfrac{1}{2}(\nabla \xi +J\nabla \xi \circ j) -\tfrac{1}{2}J(\nabla_\xi J)\partial_Ju \\
\nonumber & =  \tfrac{1}{2}\left[ (\nabla_s \xi+J\nabla_t\xi)-\tfrac{1}{2}J(\nabla_{\xi} J)(u_s-Ju_t) \right]\otimes_J(ds-idt).
\end{align}
By abuse of notation, we are not distinguishing between sections of $u^*TM$ and sections of $TM$ along $u$.

In what follows we will usually write $\pi:\mathcal{E}\to\mathcal{B}$. Note that, as $s\to \pm \infty$, $(\nabla_s \xi+J\nabla_t\xi)\to \bdry_s \xi + J(p) \bdry_t\xi$ and $u_s,u_t\to 0$.  This motivates the following definition.

\subsection{The asymptotic operator}

Consider $$\mathcal W_p=\{ \xi \in C^{\infty}([0,1],T_pM) ~|~ \xi(i)\in T_pL_i, i=0,1\},$$
with inner product
$$\langle \xi_1, \xi_2 \rangle=\int_0^1 g_p( \xi_1(t),\xi_2(t)) dt.$$
The {\em asymptotic operator} $A=A_p:\mathcal W_p\to \mathcal W_p$ is the self-adjoint operator
$$A\xi(t)= -J(p)\tfrac{\partial }{\partial t}\xi(t).$$

We list the eigenvalues of $A$
$$\cdots \leq \lambda_{-2} \leq \lambda_{-1}<0< \lambda_1\leq \lambda_2 \leq \cdots$$
with corresponding eigenfunctions
$$\cdots, f^p_{-2}, f^p_{-1}, f^p_1,f^p_2 \cdots,$$
chosen so that the $f_i^p$ form an $L^2$-orthonormal basis of $\mathcal W_p$.

\s\n
{\em Model calculation for the adjoint.}  Consider the map $u:\R\times[0,1]\to \C$ with boundary conditions $u(s,0)\in L_0=\R$ and $u(s,1)\in L_1=i\R=J\R$ and decay conditions $\lim_{s\to \pm \infty} u(s,t) =0$. Consider the Cauchy-Riemann operator $Du={\frac{\bdry u}{\bdry s}} + J{\frac{\bdry u}{\bdry t}}$.

We calculate the adjoint operator $D^*v$,
\cbu 
for any compactly supported $v:\R\times[0,1]\to \C$.
\cb
 It satisfies $\langle Du,v\rangle = \langle u, D^* v\rangle$, where $\langle,\rangle$ denotes the $L^2$-norm.  More precisely, we have
\begin{align}
\nonumber \int_{\R\times[0,1]} (\tfrac{\bdry u}{\bdry s} + J\tfrac{\bdry u}{\bdry t}, & v)  ds dt  = \int_{\R\times[0,1]}(\tfrac{\bdry u}{\bdry s},v)+ (J\tfrac{\bdry u}{\bdry t}, v) ds dt\\
\nonumber & = \int_{\R\times[0,1]}  \left(\tfrac{\bdry}{\bdry s}(u,v) - (u,\tfrac{\bdry v}{\bdry s}) +\tfrac{\bdry}{\bdry t}(Ju,v) - (Ju, \tfrac{\bdry v}{\bdry t}) \right)ds dt\\
\label{eqn: adjoint} &= - \int_{\R\times[0,1]}  (u, \tfrac{\bdry v}{\bdry s}-J \tfrac{\bdry v}{\bdry t})dsdt +\int_{\R\times[0,1]}  \tfrac{\bdry}{\bdry t}(Ju,v) ds dt
\end{align}
Here $(\cdot,\cdot)$ is the real part of the standard Hermitian inner product on $\C$. Observe that:
$$\int_{-\infty}^\infty \tfrac{\bdry }{\bdry s}(u,v) ds = \left.(u,v)\right|_{s=-\infty}^{s=+\infty}=0$$
by the decay conditions at $s=\pm \infty$.  We also have
\begin{equation} \label{octopus}
\int_0^1\tfrac{\bdry}{\bdry t} (Ju,v)dt=\left.(Ju,v)\right|_{t=0}^{t=1}= \left.\omega(Ju,Jv)\right|_{t=0}^{t=1}  = \left.\omega(u,v)\right|_{t=0}^{t=1}.
\end{equation}

The following claim implies the adjoint is $D^*v= -({\frac{\bdry v}{\bdry s}}-J{\frac{\bdry v}{\bdry t}})$, subject to the restriction of the domain to $v$ satisfying $v(s,0)\in L_0=\R$ and $v(s,1)\in L_1=J\R$.

\begin{claim}
If $\langle Du,v\rangle =0$ for all $u$, then $v$ satisfies $D^*v=0$ and boundary conditions $v(s,0)\in L_0=\R$ and $v(s,1)\in L_1=J\R$.
\end{claim}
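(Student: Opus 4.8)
The plan is to extract the two conclusions ($D^*v=0$ in the interior and the boundary conditions on $v$) from the vanishing of the pairing $\langle Du,v\rangle$ by a standard two-stage argument: first test against maps $u$ supported away from the boundary to kill the interior term, then test against maps $u$ allowed to touch the boundary to pin down the boundary behavior of $v$.

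First I would combine \eqref{eqn: adjoint} and \eqref{octopus} to write
$$\langle Du,v\rangle = -\int_{\R\times[0,1]}\Big(u,\tfrac{\bdry v}{\bdry s}-J\tfrac{\bdry v}{\bdry t}\Big)\,ds\,dt + \int_{\R}\omega\big(u,v\big)\big|_{t=0}^{t=1}\,ds.$$
Restricting attention to $u$ that are compactly supported in the open strip $\R\times(0,1)$, the boundary term vanishes identically, so $\langle Du,v\rangle=0$ for all such $u$ forces $\int (u, \tfrac{\bdry v}{\bdry s}-J\tfrac{\bdry v}{\bdry t})=0$ for all $u\in C_c^\infty(\R\times(0,1),\C)$, hence $\tfrac{\bdry v}{\bdry s}-J\tfrac{\bdry v}{\bdry t}=0$, i.e.\ $D^*v=0$ on the interior. (Here I use that $v$ is smooth, or at least that the fundamental lemma of the calculus of variations applies; since $v$ is assumed compactly supported and smooth this is immediate.)

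With the interior term now known to vanish for \emph{all} admissible $u$ (not just those supported in the interior), the identity reduces to $\int_{\R}\big(\omega(u(s,1),v(s,1))-\omega(u(s,0),v(s,0))\big)\,ds=0$ for every $u$ satisfying the boundary conditions $u(s,0)\in\R$, $u(s,1)\in i\R$ and the decay at $s\to\pm\infty$. Choosing $u$ supported near $t=0$ with $u(s,0)$ an arbitrary compactly supported $\R$-valued function (and $u(s,1)=0$), we get $\int_{\R}\omega(u(s,0),v(s,0))\,ds=0$ for all such $u(s,0)$, which forces $\omega(a,v(s,0))=0$ for every $a\in\R=L_0$ and every $s$; since $\omega(\cdot,\cdot)$ restricted to $\C=\R\oplus i\R$ is the standard symplectic form and $\R$ is Lagrangian, $\omega(a,w)=0$ for all $a\in\R$ iff $w\in\R$, so $v(s,0)\in\R=L_0$. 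Symmetrically, taking $u$ supported near $t=1$ with $u(s,1)$ an arbitrary compactly supported $i\R$-valued function gives $\omega(b,v(s,1))=0$ for all $b\in i\R=L_1$, whence $v(s,1)\in i\R=L_1$.

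I do not expect a serious obstacle here; the one point requiring a little care is the order of quantifiers — one must first use interior test functions to obtain $D^*v=0$ \emph{as a pointwise identity}, and only then is one entitled to drop the interior integral for all remaining $u$ and isolate the boundary contributions at $t=0$ and $t=1$ independently (using that $u$ near one boundary component can be chosen to vanish near the other). A secondary subtlety is producing, for a prescribed trace $u(s,i)$, an actual extension $u$ on the strip with the correct boundary conditions on both edges and the decay at $\pm\infty$; this is elementary (cut off a constant-in-$t$ extension), but worth noting so the argument is honest.
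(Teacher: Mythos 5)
Your proposal is correct and follows the same route as the paper: both derive the identity combining the interior term $\int (u, D^*v)$ with the boundary term, then decouple it by first testing against $u$ supported in the interior of $\R\times[0,1]$ (yielding $D^*v=0$ pointwise) and then against $u$ supported near each boundary component (yielding the Lagrangian boundary conditions on $v$). Your write-up simply spells out the details that the paper's one-line "decouple" step leaves implicit.
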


\begin{proof}
By Equations~\eqref{eqn: adjoint} and ~\eqref{octopus}, if $\langle Du,v\rangle=0$ for all $u$, then
$$\int_{\R\times[0,1]}  (u, D^*v)dsdt +\int_{\R}\left((Ju(s,1),v(s,1))-(Ju(s,0),v(s,0))\right)ds =0$$
for all $u$.  We can decouple this equation into two pieces by considering $u$ that are supported in the interior of $\R\times[0,1]$ and on small neighborhoods of boundary points. Hence we obtain the conditions $D^*v=0$ and $v(s,0)\in L_0=\R$ and $v(s,1)\in L_1=J\R$.
\end{proof}
\cb

\subsection{Interior semi-global Kuranishi charts} \label{charts}

Let us first consider a single moduli space
$$\mathcal M_J=\mathcal M_J(p,q;A)=\widetilde{\mathcal{M}}_J(p,q;A)/\R.$$
We will often suppress the almost complex structure $J$ from the notation when it is clear from the context.
Let us also abbreviate $\widetilde{\mathcal{M}}=\widetilde{\mathcal{M}}(p,q;A)$, $\mathcal{B}=\mathcal{B}(p,q;A)$, and $\mathcal{E}=\mathcal{E}(p,q;A)$.


\begin{defn}
An {\em interior semi-global Kuranishi chart} is a quadruple $(\K, \pi: \E\to \V, \overline\partial,\psi),$
where:
\begin{enumerate}
\item[(i)] $\K \subset \mathcal M$ is a large compact subset; if $\mathcal M$ is compact, we take $\K=\mathcal M$;
\item[(ii)] $\pi:\E\to \V$, called the {\em obstruction bundle}, is a finite rank vector bundle over a finite-dimensional manifold;
\item[(iii)] $\overline{\partial}: \V\to \E$ is a section;
\item[(iv)] $\psi: \overline{\partial}^{-1}(0)\to \mathcal M$ is a homeomorphism onto an open subset of $\mathcal{M}$ and $\K\subset \op{Im}(\psi)$;
\item[(v)] $\dim \V-\op{rk}\E=\op{vdim}\mathcal{M}$.
\end{enumerate}
If the group $G$ \cbu acts on \cb $(\K, \pi: \E\to \V, \overline\partial,\psi)$, then the Kuranishi chart is {\em $G$-invariant}.

A section ${\mathfrak s}$ of $\pi: \E\to \V$ that is transverse to $\overline \partial $ is an {\em obstruction section}.
\end{defn}

\begin{notation}
In (iii) we are abusing notation and writing $\overline \bdry$ for the section to indicate that it descends from $\overline\bdry:\mathcal{B}\to \mathcal{E}$; for the charts we construct, the sections $\overline\bdry$ are consistent with one another.  We will also often abuse notation and write $\K\subset \V$ without referring to the map $\psi$.
\end{notation}

The goal of this subsection is to construct a $G$-equivariant interior semi-global Kuranishi chart over a large $G$-invariant compact subset $\K\subset \mathcal{M}$.  


Let $B_q\subset M$ be a sufficiently small disk neighborhood of $q\in L_0\cap L_1$.  Given $m\in B_q$, let
$$\Gamma^q_m: T_qB_q\to T_mB_q$$
be the parallel transport with respect to the Levi-Civita connection of $g$ along the shortest geodesic from $q$ to $m$.  Next we define the $t\in [0,1]$-dependent section $F^q_j:[0,1]\times B_q\to TB_q$ of $TB_q\to B_q$ by
$$F^q_j(t,m)=\Gamma^q_m (f^q_j(t)),$$
where $f^q_j$ are the eigenfunctions of $A_q$.

\cbu
\begin{defn}[The map $a_q$] \label{location epsilon}
Let $\mathcal{P}(B_q)$ be the space of $C^1$-paths $\gamma:[0,1]\to B_q$ satisfying $\gamma(i)\in L_i$ for $i\in\{0,1\}$. We then define a map
$$a_q:\mathcal{P}(B_q)\to \R$$
as follows: Let $v_\gamma:(-\infty,0]\times [0,1]\to M$ be a $C^1$-map such that $v_\gamma(0,t)=\gamma(t)$, $v_\gamma(s,i)\in L_i$ for $i\in\{0,1\}$, and $\lim_{s\to -\infty} v_\gamma(s,t)=q$. Then $v_\gamma$ is a path in $\mathcal{P}(B_q)$ from the constant path at $q$ to $\gamma$.  Then let
\begin{equation}
a_q(\gamma)=\int_{(-\infty,0]\times[0,1]} v_\gamma^* \omega.
\end{equation}
Note that $a_q(\gamma)$ does not depend on the choice of path $v_\gamma$.
\end{defn}

By the monotonicity lemma, there exists $\varepsilon>0$ such that for each nonconstant $v\in \widetilde{\mathcal M}$, there exists a unique value $s^q_{v,\varepsilon}$ of $s\in \R$ which satisfies the following:
\begin{enumerate}[label=\textbf{(sq)}]
\item \label{def: sq} the path $\gamma_{v,s}(t)= v(s,t)$ is contained in $B_q$ and $a_q(\gamma_{v,s})=\varepsilon$.
\end{enumerate}
Note that if $v'(s,t) = v(s + s_0, t)$ then $s^q_{v',\varepsilon} = s^q_{v,\varepsilon} - s_0$. 

\cb
We can also define $s^q_{u,\varepsilon}$ for $u\in \mathcal{B}$ which is $C^1$-close to $v$.

Let $\mathcal U=\mathcal U_v\subset\mathcal{B}$ be a sufficiently small open neighborhood of \cbu $v\in \widetilde{\mathcal M}$\cb.  Fix $\delta>0$ small.  \cbu
We pick a smooth bump function $\beta: \R \to [0,1]$ such that 
\begin{enumerate}
\item[(a)] $\beta(s)=1$ for $s\in [-1,1]$, and
\item[(b)]  $\beta(s)=0$ for $s\not\in [-2,2]$.
\end{enumerate}
\cb We construct a section $\tilde f_j^q=\tilde f_j^{q,\delta}$ of $\mathcal{E}|_{\mathcal{U}} \to \mathcal U$ as follows.
For each $u\in \mathcal U$, we define
\begin{equation}
\tilde f^q_j(u)=\beta_u^q \cdot u^* F^q_j \otimes_{\C} (ds-idt)\in W^{k,p}(S, \Lambda ^{0,1}S\otimes u^*{TM}),
\end{equation}
where $\beta_u^q:\R\to [0,1]$ is a smooth bump function of $s$ \cbu defined by 
$$\beta_u^q(s) = \beta(\delta^{-1}(s - s^q_{u,\varepsilon})).$$

\begin{center}
\begin{figure}
\begin{tikzpicture}[scale = 0.8]
\filldraw[black!10!white]
(-1,-2.5) rectangle (0,2.5);
\draw [dotted] (-1,2.5) -- (0, 2.5);
\draw (-1,-2.5) -- (-1,2.5);
\draw (0,-2.5) -- (0,2.5);
\draw[dotted] (-1,-2.5) -- (0,-2.5);
\node [left] at (-1,-1) {$s^q_{u,\varepsilon}$};
\draw[dashed] (-1,-1) -- (0,-1);
\draw (1,2.5) -- (1, -0.5);
\draw (1, -1.5) -- (1, -2.5);
\draw (1, -0.5) .. controls (1, -0.7) and (1.5, -0.7) .. (1.5, -1);
\draw (1, -1.5) .. controls (1, -1.3) and (1.5, -1.3) .. (1.5, -1);
\node[right] at (1, -2) {$\beta^q_{u}$};
\node [below] at (-0.4,-2.5) {$-\infty$};
\node[above] at (-0.4, 2.5) {$+\infty$};
\draw[->] (2,1) -- (3,1);
\node[above] at (2.5, 1) {$u$};
\draw (5.1, 2.8) .. controls (4, 0) .. (5.1, -2.8);
\draw (4.9, 2.8) .. controls (6, 0) .. (4.9, -2.8);
\node[left] at (4.5, -1) {$L_1$};
\node[right] at (5.5, 1) {$L_0$};
\node[right] at (5,2.5) {$p$};
\node[right] at (5,-2.5) {$q$};
\end{tikzpicture}
\caption{}
\end{figure}
\end{center}

\cb We denote by $E^{\ell}=E^{q,\ell} \to \mathcal U$ the vector subbundle of $\mathcal E|_{\mathcal U}$ spanned by the sections \cbu $\tilde f_{-1}^q,\dots,\tilde f_{-\ell}^q$. 
The $\R$-translation of $S=\R\times [0,1]$ induces an $\R$-action on $\mathcal E \to \mathcal B$ with respect to which the sections $\overline \partial$ and $\tilde f_i^q$ are equivariant. We denote by $\E^\ell \to \mathbb U$ the quotient of the bundle $E^{\ell} \to \mathcal U$ by the $\R$-action. 
We also introduce the vector space
\begin{equation} \label{eqn: e ell}
e^\ell=e^{q,\ell}=\R\langle f_{-1}^q,\dots, f_{-\ell}^q \rangle.
\end{equation}

\begin{prop} \label{prop: large chart}
There exist a sufficiently large $\ell$ and a sufficiently small open neighborhood $\mathcal N(\K)\subseteq \mathcal B/\R$ of $\K$ such that the vector bundle $\mathbb E^{\ell}\to \mathcal N(\K)$, obtained by patching together charts of the form $\mathbb E^\ell\to \mathbb{U}$ with $\mathbb U = \mathbb U_{[v]} := \mathcal U_v /\R$ and  $[v]\in \K$, is transverse to the section $\overline\partial|_{\mathcal N(\K)}$ and is trivial with fibers that are canonically identified with $e^\ell$.
\end{prop}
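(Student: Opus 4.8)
The plan is to prove Proposition~\ref{prop: large chart} in two stages: first establish the transversality statement for a single chart $\mathbb{E}^\ell \to \mathbb{U}_{[v]}$ with $\ell$ large enough (depending on $[v]$), then use compactness of $\K$ to extract a uniform $\ell$ and a neighborhood $\mathcal N(\K)$, and finally check that the patched bundle is trivial with fiber $e^\ell$. The point of the construction is that transversality of $\overline\partial|_{\mathcal N(\K)}$ to $\mathbb E^\ell$ at a point $[u]$ in the zero set is equivalent to surjectivity of the map
$$D_u \oplus (\text{evaluation of the }\tilde f^q_j)\colon \mathcal W^{k+1,p}(S,u^*TM) \oplus e^\ell \longrightarrow W^{k,p}(S,\wedge^{0,1}S\otimes_J u^*TM),$$
i.e., that the image of $D_u$ together with the span of $\tilde f^q_{-1}(u),\dots,\tilde f^q_{-\ell}(u)$ is all of $\mathcal E_u$. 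So the real content is: the cokernel of $D_u$, which is finite-dimensional (since $D_u$ is Fredholm by the standard setup recalled in Section on the Fredholm setup, after the usual exponential-decay weighting), is spanned by (the projections of) finitely many of the sections $\tilde f^q_j$.

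The key steps are as follows. First I would identify $\op{coker} D_u$ with $\ker D_u^*$, where $D_u^*$ is the formal adjoint, using the model computation of the adjoint carried out in the excerpt (the Claim there): elements of the cokernel are solutions $\eta$ of $D_u^*\eta = 0$ satisfying the conjugate Lagrangian boundary conditions $\eta(s,0)\in T L_0$, $\eta(s,1)\in T L_1$, and with the appropriate exponential decay as $s\to\pm\infty$. Such $\eta$ decay to $0$ at $+\infty$ and, near $s = -\infty$, are asymptotically governed by the asymptotic operator $A_q$ at $q$; more precisely $\eta(s,\cdot)$ is, up to faster-decaying error, a combination of the negative-eigenvalue eigenfunctions $f^q_{-1}, f^q_{-2},\dots$ of $A_q$ (these are exactly the ones that decay as $s\to-\infty$ for the adjoint equation, by the sign conventions set up in the subsection on the asymptotic operator). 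Second, I would observe that pairing $\eta$ against $\tilde f^q_j(u) = \beta^q_u \cdot u^* F^q_j \otimes_\C(ds - idt)$ computes, for $j$ in the relevant range and $\varepsilon,\delta$ small, essentially the $L^2(\{s^q_{u,\varepsilon}\}\times[0,1])$-inner product of the leading asymptotic term of $\eta$ with $f^q_j$, because $u^* F^q_j$ is (via parallel transport $\Gamma^q_m$) a near-constant extension of $f^q_j$ over the small neighborhood $B_q$, $\beta^q_u$ is supported in the region where $u$ maps into $B_q$, and $s^q_{u,\varepsilon}$ is pinned down by the energy normalization \ref{def: sq}. Hence the pairing matrix between a basis of $\ker D_u^*$ and $\{\tilde f^q_{-1}(u),\dots,\tilde f^q_{-\ell}(u)\}$ is, for $\ell$ large and the parameters small, a small perturbation of a matrix of full rank equal to $\dim\ker D_u^*$ — this gives surjectivity, hence transversality at $[u]$. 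Since transversality is an open condition and the cokernel dimension is upper semicontinuous, it persists on a neighborhood; running this at each $[v]\in\K$ and using compactness yields a single $\ell$ and $\mathcal N(\K)$. Finally, triviality of $\mathbb E^\ell$ with fiber $e^\ell$ is essentially built in: over each $\mathbb U_{[v]}$ the sections $\tilde f^q_{-1},\dots,\tilde f^q_{-\ell}$ are, for small enough neighborhoods, pointwise linearly independent (again because they are near-constant copies of an $L^2$-orthonormal family cut off by $\beta^q_u$), so they frame $\mathbb E^\ell|_{\mathbb U_{[v]}}$ and furnish the identification with $e^\ell$; one checks the transition maps on overlaps are the identity because all charts use the \emph{same} asymptotic model at $q$ and the same $\varepsilon,\delta$, so the sections $\tilde f^q_j$ agree where both are defined (this is the content of the "$\overline\partial$ are consistent with one another" remark preceding the proposition).

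The main obstacle is the second step: making precise and quantitative the claim that pairing a cokernel element $\eta$ against $\tilde f^q_j(u)$ recovers its asymptotic coefficient. This requires (i) uniform exponential decay estimates for solutions of $D_u^*\eta = 0$, uniform over $u$ in $\mathcal N(\K)$ — which in turn rests on a uniform lower bound for the smallest nonzero eigenvalue $|\lambda_{-1}|$ of the asymptotic operators $A_q$ over the finite set $L_0\cap L_1$, and on elliptic bootstrapping to control $\eta$ in $C^1$ from its $L^2$ or $W^{k,p}$ bound; (ii) controlling the error introduced by $\Gamma^q_m$ differing from the identity and by $u$ not being exactly the constant map $q$ near $s^q_{u,\varepsilon}$, which is where smallness of $B_q$ and of $\varepsilon$ enters (the path $\gamma_{v,s}$ with $a_q(\gamma_{v,s})=\varepsilon$ stays in a controlled neighborhood of $q$ by the monotonicity lemma, as already invoked); and (iii) ensuring the cutoff $\beta^q_u$, supported in a window of width $O(\delta)$ around $s^q_{u,\varepsilon}$, neither truncates the asymptotic region prematurely nor extends into the region where $u$ has left $B_q$ — this is a compatibility condition between $\varepsilon$ and $\delta$. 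All three are routine but technical; everything else is formal linear algebra plus the Fredholm/adjoint package already set up in the preceding subsections, together with a standard compactness argument to globalize. I would isolate (i) as a lemma on uniform asymptotic control and cite \cite{MS2} for the Fredholm and regularity inputs.
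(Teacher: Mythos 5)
Your proposal matches the paper's (omitted) argument: the paper's one-sentence sketch says precisely that $D_u^*(\zeta\otimes(ds-idt))$ is approximated by $-(\partial_s\zeta+A\zeta)$ for $s\ll 0$ and that every nonzero element of $\operatorname{Ker}D_u^*$ has a negative end dominated by $e^{-\lambda_j s}f_j(t)$ with $j<0$, which is exactly your identification of $\operatorname{coker}D_u$ with $\ker D_u^*$ and the detection of its asymptotic coefficients by pairing against the cutoff sections $\tilde f^q_{-1},\dots,\tilde f^q_{-\ell}$. The remaining steps (uniformity over the compact set $\K$, and triviality of the bundle via the frame $\{\tilde f^q_j\}$) are also as in the cited reference, so your write-up is a correct expansion of the same proof.
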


The proof is similar to that of \cite[Theorem~5.1.2]{BH2} and will be omitted.  In a nutshell, this is because $D^*_u (\zeta \otimes (ds - idt))$ is approximated by $-(\partial_s \zeta + A \zeta)$ for $s\ll 0$ and each nonzero element of $\op{Ker}D^*_u$ has a negative end that is dominated by $e^{-\lambda_j s} f_j(t)$ for some $j<0$.

\cb
We then define
$$\V:=\overline{\partial}^{-1}(\E^\ell|_{\mathcal N( \K)})\subset \mathcal N(\K)$$
and restrict $\mathbb E^{\ell}\to \mathcal N(\K)$ to $\V$.   By shrinking $\V$ if necessary, we may assume that $\V$ is $G$-equivariant.
This completes the construction of a $G$-equivariant interior semi-global Kuranishi chart for $\K$.

In view of the identification of the fibers of $\E^\ell$ with $e^\ell$, we will usually take an obstruction section ${\mathfrak s}$ on $\E^{\ell}\to \V$ to be a generic point in $e^\ell=e^{q,\ell}$ which is sufficiently close to the origin.  A more specific choice of the generic point $s_q\in e^{q,\ell}$ will be made in Section~\ref{subsection: curve counting}.

\subsection{Boundary semi-global Kuranishi charts}\label{charts'}

In this subsection, we explain how to construct Kuranishi charts for curves that are close to breaking.

\subsubsection{Simplest case} \label{subsubsection: simplest case}

Let us consider the simplest situation where
$$\mathcal M_1=\mathcal M(p,r;A_1),\quad \mathcal M_2=\mathcal M(r,q;A_2),\quad \mathcal M_3= \mathcal M(p,q; A_1+A_2),$$
$\bdry \mathcal{M}_3=\mathcal{M}_1\times \mathcal{M}_2$, and $\mathcal{M}_3$ is $G$-invariant.   Let $\K_1$, $\K_2$, $\K_3$ be compact subsets of $\mathcal M_1$, $\mathcal M_2$, $\mathcal M_3$, respectively,
$$(\K_1,  \E_1\to \V_1, \overline\partial),\quad (\K_2, \E_2 \to \V_2,\overline\partial),\quad (\K_3, \E_3\to \V_3,\overline\partial)$$
be the corresponding interior $G$-equivariant semi-global Kuranishi charts, and ${\mathfrak s}_1\in e^{r,\ell}$, ${\mathfrak s}_2\in e^{q,\ell}$, ${\mathfrak s}_3\in e^{q,\ell}$ be the obstruction sections.

We will construct a boundary semi-global Kuranishi chart $\E_{(12)}\to \V_{(12)}$ over the curves of $\mathcal{M}_3$ that are close to breaking.  Let $\sigma>0$ be small. \cbu

\begin{defn}[Close to breaking] \label{defn: close to breaking}
An element $u\in \mathcal B(p,q;A_1+A_2)$ (resp.\ $[u]\in  \mathcal B(p,q;A_1+A_2)/\R$)  is {\em $\sigma$-close} to a broken strip $([u_1],[u_2])\in \V_1 \times \V_2 $ if there exist representatives $u_1$, $u_2$ of $[u_1]$, $[u_2]$ (resp. $u$, $u_1$, $u_2$ of $[u]$, $[u_1]$, $[u_2]$) such that
\begin{itemize}
\item  $u|_{[\sigma^{-1},\infty)\times [0,1]}$ is $\sigma$-close in the $C^1$-norm to $u_1|_{[\sigma^{-1},\infty)\times[0,1]}$,
\item  $u|_{(-\infty,-\sigma^{-1}]\times [0,1]}$ is $\sigma$-close in the $C^1$-norm to $u_2|_{(-\infty,-\sigma^{-1}]\times[0,1]}$, 
\item $u|_{[-\sigma^{-1},\sigma^{-1}]\times [0,1]}$, $u_1|_{(-\infty,\sigma^{-1}]\times[0,1]}$, and $u_2|_{[-\sigma^{-1},\infty)\times[0,1]}$ are $\sigma$-close in the $C^1$-norm to the constant map to the point $r$.
\cb
\end{itemize}
\end{defn}

\cb
Let $\widetilde {\mathcal G}_{\sigma}(\V_1,\V_2)\subset  \mathcal B(p,q;A_1+A_2)$ be the subset of maps $u$ that are $\sigma$-close to some broken strip $([u_1],[u_2])\in \V_1 \times \V_2$ and let $\mathcal G_\sigma (\V_1,\V_2):=\widetilde {\mathcal G}_{\sigma}(\V_1,\V_2)/\R$.

For each $u\in \widetilde {\mathcal G}_\sigma (\V_1,\V_2)$, there exists a unique value  $s^r_{u,\varepsilon}$ of $s\in \R$ which satisfies the following:
\begin{enumerate}[label = \textbf{(sr)}]
\item \label{condition: sr} the path $\gamma_{u,s}(t)= u(s,t)$ is contained in $B_r$ and $a_r(\gamma_{u,s})=\varepsilon$. 
\end{enumerate}
Then for each $u\in \widetilde {\mathcal G}_\sigma (\V_1,\V_2)$ we define
$$\tilde f^{r}_j(u)=\beta^r_u \cdot u^* F^r_j \otimes_{\C} (ds-idt)\in W^{k,p}(S, \Lambda ^{0,1}S\otimes u^*{TM}),$$
\cbu where $\beta^r_u:\R\to[0,1]$ is the smooth bump function
$\beta^r_u(s)=\beta(\delta^{-1}(s-s^r_{u,\varepsilon}))$,
and $\beta$ is as before.\cb 

Let $E^{\ell}(\V_1,\V_2) \to \widetilde {\mathcal G}_{\sigma}(\V_1,\V_2)$ be the vector subbundle of $\mathcal E|_{\widetilde {\mathcal G}_{\sigma}(\V_1,\V_2)}$ spanned by the sections \cbu $\tilde f_{-1}^{r},\dots,\tilde f_{-\ell}^{r}$ and $\tilde f_{-1}^q,\dots,\tilde f_{-\ell}^q$. \cb  By linear gluing (a simpler version of Theorem~\ref{thm: gluing} described below) for $\sigma>0$ sufficiently small, $E^{\ell}(\V_1,\V_2) \to \widetilde {\mathcal G}_{\sigma}(\V_1,\V_2)$ is transverse to $\overline\bdry$. We then define
$$V_{(1,2)}:=\overline{\partial}^{-1}(E^\ell(\V_1,\V_2) )\subset \widetilde {\mathcal G}_{\sigma}(\V_1,\V_2).$$
The quotient of $E^\ell(\V_1,\V_2)|_{V_{(1,2)}}\to V_{(1,2)}$ by the $\R$-translation is denoted by:
$$\pi_{(1,2)}:\E^{\ell}_{(1,2)} \to \V_{(1,2)}.$$
Observe that $\E^{\ell}_{(1,2)}$ is a trivial vector bundle whose fibers are canonically identified with $e^{r,\ell}\oplus e^{q,\ell}$.

Let us fix $\varepsilon'$ satisfying $0<\varepsilon'\ll \varepsilon$. Suppose $\sigma=\sigma(\varepsilon')>0$ is sufficiently small.

\begin{defn}[Neck length]
The {\em neck length function} is the function
\begin{gather}
\nonumber \mathfrak{nl}: \widetilde {\mathcal G}_{\sigma}(\V_1,\V_2)\to \R^+, \\
\label{eqn: defn of neck length} u\mapsto   s^r_{u,-\varepsilon'}-s^r_{u,\varepsilon'},
\end{gather}
where $s^r_{u, -\varepsilon'}$ and $s^r_{u,\varepsilon'}$ are the unique values defined as in (sr) above.
\end{defn}

Observe that $\mathfrak{nl}: \widetilde {\mathcal G}_{\sigma}(\V_1,\V_2)\to \R^+$ descends to $\mathfrak{nl}: \mathcal{G}_\sigma (\V_1,\V_2)\to \R^+$.
Pick \cbu $\mathcal{L}=\mathcal{L}(\varepsilon',\sigma)>0$ large \cb and $\varepsilon''>0$ small.  After some modifications we may assume that:
\begin{enumerate}[label= (C)]
\item $\V_3$ and $\V_{(1,2)}$ cover $\mathcal{M}_3$;
\end{enumerate}
\begin{enumerate}[label=(C$_3$)]
\item $\V_3\cap \mathcal{M}_3$ consists of $[u] \in \mathcal{M}_3 - {\mathcal G}_{\sigma}(\V_1,\V_2)$ and $[u]\in {\mathcal G}_{\sigma}(\V_1,\V_2)\cap\mathcal{M}_3$ satisfying $\mathfrak{nl}([u]) <\mathcal{L}$;
\end{enumerate}
\begin{enumerate}[label = (C$_{(1,2)}$)]
\item $\V_{(1,2)}\cap\mathcal{M}_3$ consists of $[u] \in {\mathcal G}_{\sigma}(\V_1,\V_2)\cap\mathcal{M}_3$ satisfying $\mathfrak{nl}([u])> \mathcal{L}-\varepsilon''$;
\end{enumerate}
\begin{enumerate}[label = (G)]
\item $G$ acts equivariantly on $\E_{(1,2)}\to \V_{(1,2)}$. 
\end{enumerate}

The bundles $\E_3\to \V_3$ and $\E_{(1,2)}\to \V_{(1,2)}$ are related by  the restriction-inclusion morphism:  we first restrict $\E_3\to \V_3$ to
$$\V_{3,(1,2)}:=\V_3\cap \{\mathcal{L}-\varepsilon''< \mathfrak{nl}([u])<\mathcal{L}\}$$
and take the natural inclusion into $\E_{(1,2)}\to \V_{(1,2)}$, recalling that the fibers of $\E_3$ are canonically identified with $e^{q,\ell}$ and the fibers of $\E_{(1,2)}$ are canonically identified with $e^{r,\ell}\oplus e^{q,\ell}$.

\begin{defn}[The function $\zeta$] \label{defn: zeta}
Choose $0<\varepsilon'''\ll \varepsilon''$. Let
$$\zeta:[0,\infty) \to [0,1]$$
be a smooth function such that
\begin{itemize}
\item $\zeta([0,\mathcal{L}+\varepsilon'''])= 0$,
\item $\zeta([\mathcal{L}+\varepsilon''-\varepsilon''',\infty))=1$, and
\item its restriction to $(\mathcal{L}+\varepsilon''',\mathcal{L}+\varepsilon''-\varepsilon''')$ is a diffeomorphism onto $(0,1)$.
\end{itemize}
\end{defn}

We then set
$${\mathfrak s}_{(1,2)}=(\zeta(\mathfrak{nl}) s_r,s_q)\in  e^{r,\ell}\oplus e^{q,\ell}.$$
In particular, $\mathfrak{s}_{(1,2)}=(s_r,s_q)$ on $\mathfrak{nl}\geq \mathcal{L}+\varepsilon''$ and $\mathfrak{s}_{(1,2)}=(0,s_q)$ on $\mathfrak{nl}\leq \mathcal{L}$. By the restriction-inclusion, ${\mathfrak s}_{(1,2)}$ is consistent with $\mathfrak{s}_3$.

\cbu
\subsubsection{Order of choice of constants}

We outline the 
order in which the auxiliary constants are chosen.

\begin{enumerate}
\item Choose $\varepsilon_1 > 0$ small such that $s_{u,\varepsilon'}^r$ is defined for any $0 < \varepsilon' < \varepsilon_1$ and any $u \in \widetilde{\mathcal{M}} (p,r; A_1)$.  Then choose $\ell_1$ and $\V_1$.

\item Choose $\varepsilon_2 > 0$ small such that $s_{v,\varepsilon'}^q$ and $s_{w,\varepsilon'}^q$ are defined for any $0 < \varepsilon' < \varepsilon_2$ and any $v \in \widetilde{\mathcal M}(r,q;A_2)$ and $w \in \widetilde{\mathcal M}(p,q;A_1 + A_2)$. Then choose $\ell_2$ and $\V_2$. 


\item \label{indep of ell} Choose $\varepsilon'>0$ such that $0<\varepsilon'\ll \varepsilon_1$ and then $\sigma > 0$ small such that
$s_{w,\varepsilon'}^r$ and $s_{w,-\varepsilon'}^r$ are defined for any $w \in \widetilde {\mathcal G}_{\sigma}(\V_1,\V_2)$. Here $\sigma$ can be chosen to be independent of $\ell_1$ and $\ell_2$, but we may need to shrink $\V_i$ satisfying $\K_i \subseteq  \V_i$ for $i = 0, 1$. This is because if $w$ is close to breaking into $\V_1 \times \V_2$ and $\V_1\times \V_2$ is a sufficiently small neighborhood of $\K_1\times \K_2$, then $w$ is close to breaking into $\K_1 \times \K_2$, which is $\ell_1, \ell_2$-independent. The neck length $\mathfrak{nl}(w)$ is then given by $s^r_{w,-\varepsilon'}-s^r_{w,\varepsilon'}$. 

\item Define $\V_{(1,2)}\subset {\mathcal G}_{\sigma}(\V_1,\V_2)$.

\item Choose a compact $\K_3 \subseteq \mathcal M(p,q;A_1 + A_2)$ such that if $[w] \in \mathcal M(p,q;A_1 + A_2) - \K_3$, then $[w]$ is $\sigma$-close to breaking into $\V_1 \times \V_2$, i.e., $[w] \in \mathcal{G}_\sigma (\V_1,\V_2)$.
\item Pick $\mathcal L>0$ large and $\varepsilon''>0$ small, and enlarge $\K_3$ if necessary so that $$\K_3 \cap \V_{(1,2)} \supseteq \{ [w] \in \mathcal M(p,q;A_1 + A_2) ~|~ \mathcal L - \varepsilon'' < \mathfrak{nl}([w]) < \mathcal L\}.$$
\item Choose $\ell \in \N$ such that $\E^\ell|_{\K_3}$ is transverse to $\overline \partial$.
\item With the choice of $\ell$, we may need to increase $\ell_2$ so that $\ell = \ell_2$. By (\ref{indep of ell}) this update does not affect $\sigma$, $\mathcal{L}$, $\varepsilon''$. To reduce the number of constants, we also choose to update $\ell_1$ so that $\ell_1 = \ell_2 = \ell$.
\item Trim $\V_3$ and $\V_{(1,2)}$ so that (C$_3$) and (C$_{(1,2)}$) are satisfied.
\item The constant $0 < \varepsilon''' \ll \varepsilon''$ is as defined in Definition~\ref{defn: zeta}. Then we interpolate between the sections $\mathfrak s_3$ and $\mathfrak s_{(1,2)}$.
\end{enumerate}
\cb

\subsubsection{General case}
\cbu
In general, we construct boundary semi-global Kuranishi charts by induction on the energy.

\s\n
{\em Ordering the moduli spaces.} We will explain how to order the moduli spaces $\mathcal M (p,q;A)$. We remark that even if  $\mathcal M(p,q; A) = \emptyset$, we still need to construct a Kuranishi chart for $\mathcal M(p,q; A)$ (i.e., include $\mathcal M(p,q;A)$ in our list), if $A = A_1 + A_2$ for some $A_1 \in \pi_2(p,r)$ and $A_2 \in \pi_2(r,q)$, and $\mathcal M(p,r; A_1) \neq \emptyset$ and $\mathcal M(r,q; A_2) \neq \emptyset$.

We define 
$$\mathfrak{M} = \{ (p, q, A) ~ | ~  p,q \in L_0 \cap L_1 \text{ and } A\in \pi_2(p,q) \},$$
$$\mathfrak{M}^{1+} = \{ (p,q,A)\in \mathfrak{M} ~ | ~  \mathcal M(p,q; A) \neq \emptyset \},$$
$$\lambda_1 = \inf \{\omega(A) ~|~ (p,q; A)\in \mathfrak{M}^{1+} \},$$
and
$$\mathfrak{M}^1 =\{ (p,q; A)\in \mathfrak{M}^{1+} ~|~   \omega(A) = \lambda_1  \}.$$

Suppose that $\mathfrak{M}^k$ has been inductively defined for all $k < m$.  We then define $\mathfrak{M}^{m+}$ to be the set of 
$$(p,q,A) \in \mathfrak M \backslash \left( \cup_{i=1}^{m-1}  \mathfrak M^i \right) $$ 
such that either
\begin{enumerate}
\item $ \mathcal M(p,q; A) \neq \emptyset$; or 
\item There exist $r \in L_0 \cap L_1$, $A_1\in \pi_2(p,r)$ and $A_2 \in \pi_2(r,q)$ satisfying 
	\begin{enumerate} 
	\item[(2a)] $A = A_1 + A_2$,
	\item[(2b)] $(p,r; A_1) \in \mathfrak M^{k_1}$ for some $k_1 < m$, and 
	\item[(2c)] $(r,q;A_2) \in \mathfrak M^{k_2}$ for some $k_2 < m$.
	\end{enumerate}
\end{enumerate}
Let 
$$\lambda_m = \inf \{\omega(A) ~|~ (p,q,A) \in \mathfrak M^{m+}\},$$ and 
$$\mathfrak M^m = \{ (p,q,A)\in \mathfrak M^{m+} ~|~ \omega(A) = \lambda_m \}.$$
By Gromov compactness, one can see that for each $k$, $\mathfrak M^k$ is finite, and $\{\lambda_k ~|~ k \in \N \}\subset \R_{\geq 0}$ is nowhere dense.

We then order the elements of $\cup_{k=1}^\infty \mathfrak{M}^k$ as \begin{equation}\label{ordering of triples}
(p_1,q_1,A_1),(p_2,q_2,A_2),\dots
\end{equation}
so that it is consistent with the ordering 
$$\mathfrak{M}^1, \mathfrak{M}^2,\dots.$$
We denote $\mathcal M_i := \mathcal M(p_i, q_i; A_i)$.

\cb

\s

\cbu We choose an increasing sequence $N_1, N_2, \dots \to \infty$ of integers and for each $j$ we construct a semi-global Kuranishi structure $\mathcal{K}^{(j)}$ using $\mathcal{M}_1,\dots, \mathcal{M}_{N_j}$ and a section $\mathfrak{S}^{(j)}$ of $\mathcal{K}^{(j)}$. Later we will explain how to relate $\mathcal{K}^{(j)}$ and $\mathcal{K}^{(j+1)}$ and their sections.  

{\em For the moment we choose $N\gg 0$ and such that the finite set $\{\mathcal{M}_1,\dots, \mathcal{M}_N\}$ of moduli spaces is $G$-invariant.} \cb

Define the {\em source}, {\em target}, and {\em homotopy class maps}
$${\bf s}(\mathcal M(p,q;A))=p, \quad {\bf t}(\mathcal M(p,q;A))=q, \quad {\bf h}(\mathcal{M}(p,q;A))=A.$$

\begin{defn} \label{defn: index tuple}
A tuple $I=(i_1,\dots,i_k)$ with $i_j\in \{1,2,\dots,\rho\}$ is called an {\em index tuple} if $\omega({\bf h}(\mathcal M_{i_j}))>0$ for all $j$ and
${\bf t}(\mathcal M_{i_j})={\bf s}(\mathcal M_{i_{j+1}})$ for all $j<k$.
\end{defn}

If $k=1$, sometimes we write $i_1$ instead of $(i_1)$.

\begin{defn}
Let $I=(i_1,i_2,\dots, i_k)$ be an index tuple.
\be
\item An index tuple $I'$ is a {\em simple contraction} of $I$  if $I'$ is obtained by replacing a consecutive pair $i_j, i_{j+1}$ by $i'_j$ such that ${\bf h}(\mathcal{M}_{i'_j})={\bf h}(\mathcal{M}_{i_j})+{\bf h}(\mathcal{M}_{i_{j+1}}).$
\item An index tuple $I'$ is a {\em contraction} of $I$ if $I'$ is obtained from $I$ by a \cbu non-empty \cb sequence of simple contractions.  We write $I'<I$.
\item We write $c(I)$ for the index tuple $(i'_1)$ such that \cbu $(i'_1)\leq I$ (i.e., $(i'_1)< I$ or $(i'_1)=I$). \cb
\item Given $I'=(i'_1,\dots,i'_{k'})<I$, the {\em blocks of $I$ relative to $I'$} are groupings
$$(i_1,\dots,i_{l_1}),(i_{l_1+1},\dots, i_{l_2}),\dots, (i_{l_{k'-1}+1},\dots,i_{l_{k'}})$$
such that $c$ applied to the $j$th block yields $i'_j$. \cbu Note that the blocks are well-defined due to the requirement $\omega({\bf h}(\mathcal{M}_{i'_{j'}}))> 0$ for all $j' \in \{1, 2,\dots, k'\}$ \cb
\item Given $I'=(i'_1,\dots,i'_{k'})<I$, let
$$\delta(I,I')=\{i_1,\dots,i_{l_1-1},i_{l_1+1},\dots,i_{l_2-1},\dots,i_{l_{k'-1}+1},\dots,i_{l_{k'}-1}\},$$
where we are using block notation from (4).
\ee
\end{defn}

We can organize the set of index tuples as a category $\mathcal{I}$, called the {\em index tuple category}, with objects which are index tuples and a unique morphism from $I'$ to $I$ if $I'\leq I$.

Let $\K_i\subseteq \mathcal M_i$ be the large compact subsets over which we construct the equivariant interior semi-global Kuranishi chart $\mathcal C_i=(\K_i, \E_i \to \V_i,\overline\partial_i, \psi_i)$ and the obstruction section ${\mathfrak s}_i$.

Let $I=(i_1,\dots,i_k)$. The following construction of the boundary chart
$$(\pi_I:\E_I\to \V_I,\overline\bdry_I,\psi_I)$$
is a straightforward generalization of Section~\ref{subsubsection: simplest case}:
Let $\widetilde {\mathcal G}_{\sigma}(\V_{i_1},\dots,\V_{i_k})$ be the set of maps $u$ that are $\sigma$-close to a broken strip in $\V_{i_1}\times\dots\times \V_{i_k}$, defined in a manner analogous to Definition~\ref{defn: close to breaking}.  For convenience we will also write $\widetilde{\mathcal G}_{\sigma}(\V_i)$ for the set of maps $u$ that are $\sigma$-close to a map in $\V_i$.  Again we take \cbu $\mathcal{L}=\mathcal{L}(\varepsilon',\sigma)$ \cb and $\varepsilon''>0$.

\begin{defn}[Neck length] \label{defn: nl two}  Let $u\in \cup_{c(i_1,\dots,i_k)=(i)} \widetilde {\mathcal G}_{\sigma}(\V_{i_1},\dots,\V_{i_k})$.
\begin{enumerate}
\item The {\em neck length function} satisfies
$$\mathfrak{nl}'_{(i_1',i_2')}(u)=s^{r}_{u,-\varepsilon'}-s^{r}_{u,\varepsilon'}$$
if $u\in \widetilde {\mathcal G}_{\sigma}(\V_{i_1},\dots,\V_{i_k})$, $(i)<(i_1',i_2')< (i_1,\dots,i_k)$, and $r= {\bf t}(\mathcal{M}_{i_1'})={\bf s}(\mathcal{M}_{i_2'})$.
\item The {\em modified neck length function} satisfies
$$\mathfrak{nl}_{(i_1',i_2')}(u) =\left \{ \begin{array}{ll}
\lambda(\mathfrak{nl}'_{(i_1',i_2')}(u)) & \mbox{ if $\mathfrak{nl}'_{(i_1',i_2')}(u)$ is defined;}\\
0 & \mbox{ otherwise,} \end{array} \right.$$
where $\lambda: \R^+\to \R^{\geq 0}$ is \cbu a smooth function such that \cb $\lambda(x)=x$ for $x\geq \mathcal{L}-\varepsilon''$, $\lambda(x)=0$ for $x\leq \mathcal{L}-2\varepsilon''$, and $\lambda'(x)>0$ on $(\mathcal{L}-2\varepsilon'', \mathcal{L}-\varepsilon'')$.  We also write $\mathfrak{nl}_{i_j}(u)=\mathfrak{nl}_{(i_1',i_2')}(u)$ if $c(i_1,\dots,i_j)=i_1'$.
\end{enumerate}
\end{defn}

We then define the boundary charts $\pi_{ I}:\E_I\to \V_I$, $I=(i_1,\dots, i_k)$, whose fibers are canonically identified with $e^{r_1,\ell}\oplus\dots\oplus e^{r_{k},\ell}$ and such that:
\be
\item[(C$_I$)] $\V_I\cap\mathcal{M}_{c(I)}$ consists of $[u] \in {\mathcal G}_{\sigma}(\V_{i_1},\dots, \V_{i_k})\cap\mathcal{M}_{c(I)}$ satisfying
\be
\item $\mathfrak{nl}_{i_j}([u])> \mathcal{L}-\varepsilon''$ for all $j<k$ and
\item $\mathfrak{nl}_{i''_j}([u])< \mathcal{L}$  for all $i''_j\in \delta(I'',I)$ where $I< I''=(i''_1,\dots,i''_{k''})$.
\ee
\ee
The section $\overline\bdry_I$ is the $\overline\bdry$-operator restricted to $\V_I$ and $\psi_I: \overline\bdry^{-1}_I(0)\to \V_I$ is the obvious inclusion.

Observe that $G$ acts on the set of index tuples.  Let $G_I\subset G$ be the stabilizer of $I$.  By trimming $\V_I$ if necessary, we may assume that $G_I$ acts on $\E_I\to\V_I$.

Next we discuss the restriction-inclusion morphism
$$\phi_{I',I}:(\pi_{I'}:\E_{I'}\to \V_{I'})\to (\pi_I:\E_I\to \V_I),$$
where $I'=(i'_1,\dots,i'_{k'})<I=(i_1,\dots,i_k)$. We first restrict $\E_{I'}\to \V_{I'}$ to
$$\E_{I',I}:=\E_{I'}|_{\V_{I',I}}\to \V_{I',I}:=\V_{I'}\cap \{\mathcal{L}-\varepsilon''< \mathfrak{nl}_{i_j}([u])<\mathcal{L}, \forall i_j\in \delta(I,I')\}.$$
We then consider the inclusion of vector bundles given by the commutative diagram
$$
\begin{tikzcd}
\E_{I',I} \arrow[r,"\phi_{I',I}^\sharp"] \arrow[d] & \E_I \arrow[d]\\
\V_{I',I}   \arrow[r,"\phi_{I',I}^\flat"] &  \V_I
\end{tikzcd}
$$
where $\phi^\flat_{I',I}: \V_{I',I}\to \V_I$ is the inclusion and the bundle map $\phi^\sharp_{I',I}$ is defined by canonically identifying the fibers of $\E_{I'}$ and $\E_I$ with
$$e^{r'_1,\ell}\oplus\dots\oplus e^{r'_{k'},\ell} \quad \mbox{ and } \quad e^{r_1,\ell}\oplus\dots\oplus e^{r_{k},\ell},$$
and including
$$e^{r'_1,\ell}\oplus\dots\oplus e^{r'_{k'},\ell}\subset e^{r_1,\ell}\oplus\dots\oplus e^{r_{k},\ell}.$$
Here $r_j= {\bf t}(\mathcal{M}_{i_j})$ and $r_j'={\bf t}(\mathcal{M}_{i'_j})$. We have
\be
\item[(a)] $\phi_{I',I}^\sharp\circ\overline\bdry_{I'}=\overline\bdry_I\circ \phi_{I',I}^\flat$ on $\V_{I',I}$; and
\item[(b)] $\psi_{I}\circ \phi_{I',I}^\flat=\psi_{I'}$ on $\overline\bdry^{-1}_{I'}(0)\cap \V_{I',I}$.
\ee

For $I=(i_1,\dots,i_k)$ we set
\begin{equation} \label{eqn: defn of s I}
{\mathfrak s}_{I}=(\zeta(\mathfrak{nl}_{i_1}) s_{r_1},\dots, \zeta(\mathfrak{nl}_{i_{k-1}})s_{r_{k-1}}, s_{r_k})\in e^{r_1,\ell}\oplus\dots\oplus e^{r_{k},\ell},
\end{equation}
where the function $\zeta$ is as given in Definition~\ref{defn: zeta}.
Denote $\mathfrak{s}_{I',I} := \mathfrak{s}_{I'}|_{\V_{I',I}}$.
It is immediate that
$$\phi_{I',I}^\sharp \circ \mathfrak{s}_{I',I}= \mathfrak{s}_I \circ \phi_{I',I}^\flat$$  and
$$\overline \partial ^{-1}_I (\mathfrak s_I) \cap \phi_{I',I}^\flat(\V_{I',I})= \phi_{I',I}^\flat (\overline \partial ^{-1}_{I'} (\mathfrak s_{I',I})).$$

\subsection{Gluing}
\cbu
The following gluing results can be proven in a manner similar to Theorems 6.4.1, 6.4.2 in \cite{BH2} (in the contact case) and Theorem A.21 in \cite{ES}. \cb

\begin{thm}[Gluing] \label{thm: gluing}
For sufficiently large $R>0$, there exists a gluing map
\begin{equation}
\mathfrak G_{(i_1,\dots,i_m)}:\V_{i_1} \times\dots \times \V_{i_m} \times (R,\infty)^{m-1}\to \V_{(i_1,\dots,i_m)}
\end{equation}
which satisfies the following: Writing $T_1,\dots, T_{m-1}$ for the coordinates on $(R,\infty)^{m-1}$,
\be
\item $\mathfrak G_{(i_1,\dots,i_m)}$ is a $C^1$-diffeomorphism onto its image;
\item $\op{Im} \mathfrak G_{(i_1,\dots,i_m)}\supset \V_{(i_1,\dots,i_m)}\cap \{\mathfrak{nl}_{i_j}\geq R+\varepsilon'', \forall j<m\}$;
\item $\mathfrak G_{(i_1,\dots,i_m)}([u_{i_1}],\dots,[u_{i_m}], T_1,\dots, T_{m-1})$ is \cbu $\sigma$-close to the broken strip $([u_{i_1}],\dots,[u_{i_m}])$ for some $\sigma > 0$ (in the $C^1$-topology; see Definition~\ref{defn: close to breaking}); \cb
\item for $j=1,\dots,m-1$, the functions $(\mathfrak G_{(i_1,\dots,i_m)})_*T_j$ and $\mathfrak{nl}_{i_j}$ are $C^1$-close;
\item $\overline\bdry (\mathfrak G_{(i_1,\dots,i_m)}([u_{i_1}],\dots,[u_{i_m}], T_1,\dots, T_{m-1}))$ and $(\overline\bdry u_{i_1},\dots, \overline\bdry u_{i_m})$, viewed as elements of $e^{r_1,\ell}\oplus\dots\oplus e^{r_{k},\ell}$, $r_j={\bf t}(\mathcal{M}_{i_j})$, are \cbu $C^0$-close; \cb
\item the errors in (3), (4), and (5) go to zero as all $T_j\to \infty$.
\ee
\end{thm}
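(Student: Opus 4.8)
The plan is to run the standard gluing scheme — pregluing, a uniformly bounded approximate right inverse, and Newton iteration via the quantitative implicit function theorem — in the exponentially weighted Sobolev setting of this section and modulo the obstruction bundles constructed in Section~\ref{charts}, in the spirit of \cite[Theorems~6.4.1--6.4.2]{BH2} and \cite[Theorem~A.21]{ES}. \emph{Pregluing.} Fix representatives $u_{i_1},\dots,u_{i_m}$, with their $\R$-translations chosen arbitrarily, and gluing lengths $T_1,\dots,T_{m-1}>R$. For the approximate solution $u^{\mathrm{pre}}=u^{\mathrm{pre}}(\{[u_{i_j}]\},\{T_j\})$ I would translate the $j$-th piece to the right by $\approx\sum_{l<j}T_l$ and splice consecutive pieces across the neck near the common endpoint $r_j\in L_0\cap L_1$ using the bump function $\beta$: on the splicing region both $u_{i_j}$ and $u_{i_{j+1}}$ are $C^1$-close to the constant map at $r_j$ — this is where \ref{condition: S} (no bubbling) and the spectral gap $\lambda_{-1}<0<\lambda_1$ of the asymptotic operator $A_{r_j}$ enter, forcing exponential decay — so in the identification $(T_{r_j}M,J)\cong(\R^n\oplus i\R^n,i)$ one may linearly interpolate while keeping the Lagrangian boundary conditions. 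Standard elliptic estimates then give $\|\overline\partial_J u^{\mathrm{pre}}\|_{W^{k,p}}\le Ce^{-c\min_j T_j}$ with $C,c$ independent of the $T_j$ and uniform in the pieces.

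\emph{Uniform right inverse and Newton iteration.} Each $D_{u_{i_j}}$ becomes surjective after adding the obstruction space $e^{r_j,\ell}\oplus\cdots$ — this is exactly the transversality of Proposition~\ref{prop: large chart} and the construction of the bundles $\E^\ell$ — and, in the weighted norms that damp the neck, admits a right inverse $Q_{i_j}$ with uniformly bounded norm. Patching the $Q_{i_j}$ with the same cutoffs gives an approximate right inverse $Q^{\mathrm{pre}}$ for $D_{u^{\mathrm{pre}}}$ modulo the obstruction bundle, with $\|D_{u^{\mathrm{pre}}}Q^{\mathrm{pre}}-\op{id}\|\le Ce^{-c\min_j T_j}$; correcting it to an honest right inverse keeps the bound uniform in the $T_j$. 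Combined with the quadratic estimate for $\overline\partial_J$ coming from the $\nabla_\xi J$ term in \eqref{formula: D}, the implicit function theorem produces a unique small $\xi$ with $u:=\exp_{u^{\mathrm{pre}}}\xi$ satisfying $[u]\in\V_{(i_1,\dots,i_m)}$, and we set $\mathfrak G_{(i_1,\dots,i_m)}([u_{i_1}],\dots,[u_{i_m}],T_1,\dots,T_{m-1}):=[u]$.

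\emph{Items (1) and (2).} Define a decomposition map on a neighborhood of $\op{Im}\mathfrak G_{(i_1,\dots,i_m)}$ by cutting $[u]$ at the slices $a_{r_j}=\pm\varepsilon'$, projecting the resulting pieces to the charts $\V_{i_j}$, and reading off the $T_j$ from $\mathfrak{nl}_{i_j}$; this is a $C^1$ map that is simultaneously a left inverse and — by uniqueness in the implicit function theorem — a right inverse of $\mathfrak G_{(i_1,\dots,i_m)}$, so $\mathfrak G_{(i_1,\dots,i_m)}$ is a $C^1$-diffeomorphism onto its image, giving (1). Applying the decomposition map to an arbitrary $[u]\in\V_{(i_1,\dots,i_m)}$ with all $\mathfrak{nl}_{i_j}\ge R+\varepsilon''$ and then re-gluing the output recovers $[u]$, giving (2).

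\emph{Items (3)--(6) and the main obstacle.} Item (3) is immediate, since $u^{\mathrm{pre}}$ is $\sigma$-close to the broken strip and $\xi$ is small. For (4), $\mathfrak{nl}_{i_j}([u])=s^{r_j}_{u,-\varepsilon'}-s^{r_j}_{u,\varepsilon'}$ equals $T_j$ up to a universal constant coming from the placement of the splicing region and the choice of $\varepsilon'$, and the dependence is $C^1$ because $a_{r_j}$, the slice functions $s^{r_j}_{u,\pm\varepsilon'}$, and $u$ itself all vary $C^1$ in the data. For (5), the obstruction component of $\overline\partial_J u$ in $e^{r_1,\ell}\oplus\dots\oplus e^{r_m,\ell}$ is read off near the slices $a_{r_j}=\varepsilon$, where the sections $\tilde f^{r_j}_l$ are supported and where $u$ is $C^1$-close to $u_{i_j}$ (with the bookkeeping convention of Section~\ref{subsubsection: simplest case}), so the relevant pairing is $C^0$-close to the corresponding component of $\overline\partial u_{i_j}$. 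Finally (6) holds because every error estimate above carries a factor $e^{-c\min_j T_j}$. I expect the main obstacle to be the uniform-in-$T_j$ invertibility of $D_{u^{\mathrm{pre}}}$ in the correct exponentially weighted norms, together with the $C^1$-regularity of $\mathfrak G_{(i_1,\dots,i_m)}$ in the gluing parameters (differentiating the implicit-function-theorem output, using that $u^{\mathrm{pre}}$ depends smoothly on the $T_j$ and the right inverse is uniformly bounded); the additional bookkeeping for $m>2$, namely the compatibility of iterated gluing with the coordinates $\mathfrak{nl}_{i_j}$ and the cutoffs $\zeta$, is routine but lengthy.
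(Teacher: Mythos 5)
Your proposal follows exactly the route the paper intends: the paper gives no proof of Theorem~\ref{thm: gluing}, deferring to \cite[Theorems~6.4.1, 6.4.2]{BH2} and \cite[Theorem~A.21]{ES}, whose arguments are precisely the pregluing, uniformly bounded (approximate) right inverse, and Newton/implicit-function-theorem scheme you describe, carried out transverse to the obstruction bundles. The only imprecision is that since the $u_{i_j}$ lie in the thickened spaces $\V_{i_j}$ one has $\overline\partial u_{i_j}\in E^\ell$ rather than $\overline\partial u_{i_j}=0$, so the correct pregluing estimate is that $\overline\partial_J u^{\mathrm{pre}}$ is exponentially close to the transported obstruction components $(\overline\partial u_{i_1},\dots,\overline\partial u_{i_m})$ rather than to $0$; your treatment of item (5) and your framing of the iteration ``modulo the obstruction bundle'' show this is what you mean.
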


\begin{thm}[Iterated gluing] \label{thm: iterated gluing}
For sufficiently large $R>0$, there is a gluing map
\begin{align*}
\mathfrak G_{(i_1,\dots, (i_a,\dots,i_b),\dots i_m)}:&  \V_{i_1}\times\dots\times \V_{i_a-1}\times \V_{(i_a,\dots,i_b)}\\
& \times \V_{i_b+1}\times  \dots\times \V_{i_m} \times (R,\infty)^{m-(b-a)-1}\to \V_{(i_1,\dots,i_m)},
\end{align*}
satisfying properties analogous to those of Theorem~\ref{thm: gluing} and
such that
$$\mathfrak G_{(i_1,\dots,i_m)} ~~~\mbox{ and } ~~~\mathfrak G_{(i_1,\dots, (i_a,\dots,i_b),\dots i_m)}\circ (\op{id},\dots, \mathfrak G_{(i_a,\dots,i_b)},\dots, \op{id})$$
are $C^1$-close with error $\to 0$ as all the coordinates of $(R,\infty)^{m-(b-a)-1}$ go to $\infty$.
\end{thm}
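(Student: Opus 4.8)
The plan is to build $\mathfrak{G}_{(i_1,\dots,(i_a,\dots,i_b),\dots,i_m)}$ by the same two-step recipe — \emph{pregluing} followed by a \emph{contraction-mapping correction} — that underlies Theorem~\ref{thm: gluing}, the only change being that one of the input factors is now the boundary chart $\V_{(i_a,\dots,i_b)}$ rather than an interior chart $\V_{i_j}$. Given $([u_{i_1}],\dots,[u_{i_{a-1}}],[w],[u_{i_{b+1}}],\dots,[u_{i_m}])$ with $[w]\in\V_{(i_a,\dots,i_b)}$ (whose source and target are $\mathbf{s}(\mathcal M_{i_a})$ and $\mathbf{t}(\mathcal M_{i_b})$, so its asymptotics match the neighboring pieces) and neck parameters $(T_1,\dots,T_{a-1},T_b,\dots,T_{m-1})$ all $>R$, one cuts off each map near the relevant end with the bump function $\beta$, translates by the $T_j$, and splices along the necks located by the conditions $a_{r_j}(\cdot)=\varepsilon$; the linear-gluing estimate used for Theorem~\ref{thm: gluing} then shows the resulting approximate solution can be corrected to an honest element of $\overline{\partial}^{-1}(\E^\ell)\subset \V_{(i_1,\dots,i_m)}$, the correction having size $O(\|\overline{\partial}\, u^{\mathrm{pre}}\|)\to 0$ as the $T_j\to\infty$. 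Since $[w]$ already lies in $\overline{\partial}^{-1}(\E^\ell)$ for the sub-chart — i.e.\ it already solves the obstruction directions $e^{r_a,\ell}\oplus\dots\oplus e^{r_b,\ell}$, which include into the fibers of $\E_{(i_1,\dots,i_m)}$ — and the pregluing is supported away from the internal neck of $w$ once $R$ is large, the correction in those directions is also small. Properties (1)--(6) for the new map are read off exactly as in Theorem~\ref{thm: gluing}, together with the corresponding properties of $\mathfrak{G}_{(i_a,\dots,i_b)}$; $G$-equivariance plays no role and is not an issue.

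The heart of the theorem is the asserted $C^1$-closeness of $\mathfrak{G}_{(i_1,\dots,i_m)}$ and $\mathfrak{G}_{(i_1,\dots,(i_a,\dots,i_b),\dots i_m)}\circ(\op{id},\dots,\mathfrak{G}_{(i_a,\dots,i_b)},\dots,\op{id})$. For this I would first establish an \emph{associativity of pregluing} statement: the configuration obtained by first pregluing $u_{i_a},\dots,u_{i_b}$ (with parameters $T_a,\dots,T_{b-1}$) into $w^{\mathrm{pre}}$ and then pregluing $u_{i_1},\dots,u_{i_{a-1}},w^{\mathrm{pre}},u_{i_{b+1}},\dots,u_{i_m}$ agrees in $C^1$, up to an error exponentially small in $\min_j T_j$, with the configuration obtained by pregluing all $m$ pieces at once with parameters $(T_1,\dots,T_{m-1})$. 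This holds because the two procedures differ only in the overlap regions of the cutoff functions, which sit in long necks, provided $R$ is taken large enough that the outer gluing necks are disjoint from the inner necks created by the first gluing. Since $\mathfrak{G}_{(i_a,\dots,i_b)}(\cdots)$ is $C^1$-close to $w^{\mathrm{pre}}$ (by Theorem~\ref{thm: gluing}(3) applied to the sub-block) and pregluing is continuous in its inputs, the iterated-pregluing configuration built from the honest glued $w$ is $C^1$-close to the one-step preglued configuration, with error $\to 0$ as all $T_j\to\infty$.

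It then remains to compare the two \emph{corrected} maps. The correction step solves, by a uniform contraction, for the unique $\xi$ in a fixed-radius ball transverse to the approximate kernel with $\overline{\partial}(\exp_{u^{\mathrm{pre}}}\xi)\in\E^\ell$, and this solution depends $C^1$-continuously on the preglued datum, uniformly in the large parameters. Applying this to the two $C^1$-close preglued configurations above yields two elements of $\V_{(i_1,\dots,i_m)}$ that are $C^1$-close, with error $\to 0$ as all the $T_j\to\infty$, which is the claim. The main obstacle is precisely this associativity-of-pregluing estimate together with the uniqueness/continuity of the correction: one must track exactly how the cutoff overlaps and the requirement that the new necks not interact with the internal neck of $w$ constrain $R$, and one must ensure the contraction neighborhoods of the two preglued data overlap so that their corrections may legitimately be compared. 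Everything else is a routine transcription of the gluing analysis of \cite{BH2} and \cite{ES}.
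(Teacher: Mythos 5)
Your proposal is correct and follows essentially the same route as the paper, which itself gives no argument but defers to Theorems 6.4.1--6.4.2 of \cite{BH2} and Theorem A.21 of \cite{ES}; those proofs are exactly the pregluing-plus-contraction-mapping scheme you describe, with the associativity-of-pregluing estimate and the uniform continuous dependence of the Newton correction as the two key inputs. You also correctly flag the one genuinely new point in the iterated setting, namely that the middle input $[w]\in\V_{(i_a,\dots,i_b)}$ only solves $\overline\partial w\in\E^\ell$ rather than $\overline\partial w=0$, so the outer gluing necks must be kept away from the internal neck of $w$ and the obstruction directions $e^{r_a,\ell}\oplus\dots\oplus e^{r_b,\ell}$ must be carried along into the fibers of $\E_{(i_1,\dots,i_m)}$.
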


\subsection{Equivariant semi-global Kuranishi structures} \label{subsection: equivariant semi global}

The Kuranishi charts constructed in Section~\ref{charts} and \ref{charts'} can be organized into a $G$-invariant semi-global Kuranishi structure.

\cbu
{\em Again, for the moment we work with the $G$-invariant finite set $\{\mathcal{M}_1,\dots, \mathcal{M}_N\}$ of moduli spaces.} \cb

Our definition is similar to McDuff-Wehrheim's treatment of Kuranishi structures (called {\em atlases}) in \cite{MW}. (1)--(3) are general properties of Kuranishi structures/atlases and (4) and (5) are specific ``semi-global" properties.

\begin{defn}[Semi-global Kuranishi structure]
A {\em semi-global Kuranishi structure $\Kur$} is a category consisting of the following data:
\begin{enumerate}
\item The objects are semi-global Kuranishi charts $\mathcal C_I=(\pi_I:\E_I\to \V_I, \overline\partial_I,\psi_I)$:
\be
\item for each $i$, $C_i=(\pi_i: \E_i\to \V_i,\overline\bdry_i,\psi_i)$ is an interior Kuranishi chart for $\K_i\subset \mathcal{M}_i$;
\item for each $I = (i_1,\dots,i_m)$, $\pi_I:\E_I\to \V_I$ is a finite rank vector bundle over a finite-dimensional manifold, $\overline{\partial}_I: \V_I\to \E_I$ is a section, $\psi_I: \overline{\partial}^{-1}_I(0)\to \mathcal M_{c(I)}$ is a homeomorphism onto an open subset of $\mathcal{M}_{c(I)}$, and $\dim \V_I-\op{rk}\E_I=\op{vdim}\mathcal{M}_{c(I)}$; and
\item for each $i$, $\cup_{c(I)=(i)}\op{Im}(\psi_I)=\mathcal{M}_i$.
\ee

\item For each $I'\leq I$ there is a \cbu specified \cb morphism $\phi_{I',I}:\mathcal C_{I'} \to \mathcal C_I$ encoded by the data $(\V_{I',I}, \phi_{I',I}^\sharp, \phi_{I',I}^\flat)$ and given by restriction-inclusion: first restrict $\E_{I'}\to\V_{I'}$ to an open subset $\V_{I',I}\subset \V_{I'}$ and then take the inclusion of vector bundles given by a commutative diagram
$$
\begin{tikzcd}
\E_{I',I}:=\E_{I'}\vert_{\V_{I',I}}   \arrow[r,"\phi_{I',I}^\sharp"] \arrow[d] & \E_I \arrow[d]\\
\V_{I',I}   \arrow[r,"\phi_{I',I}^\flat"] &  \V_I,
\end{tikzcd}
$$
subject to:
\be
\item $\phi_{I',I}^\sharp\circ\overline\bdry_{I'}=\overline\bdry_I\circ \phi_{I',I}^\flat$ on $\V_{I',I}$;
\item $\psi_{I}\circ \phi_{I',I}^\flat=\psi_{I'}$ on $\overline\bdry^{-1}_{I'}(0)\cap \V_{I',I}$;
\item $(\overline\bdry_I)_*:T\V_I\to \E_I$ descends to an isomorphism
$$T\V_I/ (\phi_{I',I}^\flat)_* (T\V_{I',I})\stackrel\sim\longrightarrow \E_I/\E_{I'}.$$
\ee

\item The composition of morphisms is defined so that $ \phi_{I'',I}=\phi_{I'',I'}\circ \phi_{I',I}$.
\ee
The following are {\em strata compatibility conditions}:
\be

\item[(4)] (Neck length functions) For each $(i)<(i_1',i_2')$, there exists a smooth (modified) neck length function
$$\mathfrak{nl}_{(i_1',i_2')}: \cup_{c(I'')=(i)} \widetilde{\mathcal{G}}_\sigma(\V_{i_1''},\dots, \V_{i_k''}) \to\R^{\geq 0}$$
such that
$$\V_{I',I}:=\{[u] \in \V_{I'}~|~\mathcal{L}-\varepsilon''\leq \mathfrak{nl}_{(c(i_1,\dots,i_j),c(i_{j+1},\dots,i_k))}([u])\leq\mathcal{L}, \forall i_j\in \delta(I,I')\}.$$

\item[(5)] For each $I=(I_1,\dots,I_m)$\footnote{Here we abuse notation and refer both $(I_1,\dots, I_m)$ and $(i_{11},\dots, i_{1j_1},\dots, i_{m1},\dots, i_{mj_m})$ by $I$, where $I_k=(i_{k1},\dots,i_{kj_k})$.} there exists a $C^1$-bundle map $(\widetilde{\mathfrak G}_I, \mathfrak G_I)$:
$$
\begin{tikzcd}
\op{pr}_{I_1}^* \E_{I_1}\oplus \cdots \oplus\op{pr}_{I_m}^* \E_{I_m} \arrow[r,"\widetilde{\mathfrak{G}}_{I}"] \arrow[d] & \E_I \arrow[d]\\
\V_{I_1} \times \cdots \times \V_{I_m} \times (R, \infty)^{m-1}   \arrow[r,"\mathfrak{G}_{I}"] &  \V_I,
\end{tikzcd}
$$
where $R\gg 0$, $\op{pr}_{I_k}: \V_{I_1} \times \cdots \times \V_{I_m} \times (R, \infty)^{m-1} \to \V_{I_k}$ is the projection map, $T_j$ is the coordinate for the $j$th $(R, \infty)$ factor, and
\begin{enumerate}
\item $\mathfrak{G}_{I}$ is a $C^1$-diffeomorphism onto its image;
\item $\op{Im} \mathfrak G_{I}\supset \V_{I}\cap \{\mathfrak{nl}_{(c(I_1,\dots,I_j),c(I_{j+1},\dots,I_m))}\geq R+\varepsilon'', \forall j<k\}$;
\item $\mathfrak G_I([u_{I_1}],\dots,[u_{I_m}], T_1,\dots, T_{m-1})$ is close to the broken strip $([u_{I_1}],\dots,[u_{I_m}])$;
\item for $j=1,\dots,m-1$, the functions $(\mathfrak G_{I})_*T_j$ and $\mathfrak{nl}_{(c(I_1,\dots,I_j),c(I_{j+1},\dots,I_m))}$ are $C^1$-close;
\item $\widetilde{\mathfrak G}_I \circ (\overline\partial_{I_1}, \cdots ,\overline \partial_{I_m})$ and $\overline \partial_I \circ \mathfrak G_I$ are $C^1$-close;
\item the errors of (d) and (e) go to zero as $T_j \to \infty$ for all $j=1,\dots,m-1$;
\item  $\mathfrak G_{(I_1,\dots,I_m)}$ and $\mathfrak G_{(I_1,\dots, (I_a,\dots,I_b),\dots I_m)}\circ (\op{id},\dots, \mathfrak G_{(I_a,\dots,I_b)},\dots, \op{id})$ are $C^1$-close with error $\to 0$ as $T_j\to \infty$ for all $j=1,\dots, m-(b-a)-1$.
\end{enumerate}

\end{enumerate}

We say that $\Kur$ is {\em $G$-invariant} if, for each $g\in G$, $g$ induces an isomorphism
$$(\V_I\to \E_I)\stackrel\sim\longrightarrow(\V_{g(I)}\to \E_{g(I)})$$
such that $\overline\bdry_I$, $\psi_I$,  $\mathfrak{nl}_{(i_1',i_2')}$, $\mathfrak{G}_I$ are taken to
$\overline\bdry_{g(I)}$, $\psi_{g(I)}$, $\mathfrak{nl}_{g(i_1',i_2')}$, $\mathfrak{G}_{g(I)}$.

A {\em section of $\Kur$} is a collection $\{{\mathfrak s}_I: \V_I\to \E_I\}_I$ of obstruction sections such that:
\begin{enumerate}
\item $\phi_{I',I}^\sharp \circ \mathfrak{s}_{I',I}= \mathfrak{s}_I \circ \phi_{I',I}^\flat$, where $\mathfrak{s}_{I',I} := \mathfrak{s}_{I}|_{\V_{I',I}}$;
\item $\overline \partial ^{-1}_I (\mathfrak s_I) \cap \phi_{I',I}^\flat(\V_{I',I})= \phi_{I',I}^\flat (\overline \partial ^{-1}_{I'} (\mathfrak s_{I',I}))$;
\item for each $I = (i_1,\cdots, i_m)$, $\widetilde{\mathfrak G}_I\circ (\mathfrak s_{i_1}, \cdots, \mathfrak s_{i_m})$  and ${\mathfrak s_I}\circ \mathfrak G_I$ are $C^1$-close and the error goes to $0$ as $T_j \to \infty$ for all $i = 1, \cdots , m-1$.
\end{enumerate}
\end{defn}

\begin{rmk}
There is no reason to expect the sections $\{{\mathfrak s}_I\}_I$ to be $G$-invariant.  This will be treated in Section~\ref{subsection: curve counting}.
\end{rmk}

\begin{figure}
\begin{tikzpicture}[>=stealth]
\draw[<-](0,0) -- (5,0);
\draw[<-] (0, 0) -- (0, -5);
\draw (0,-2) -- (5,-2)
	(0,-2.5) -- (5, -2.5)
	(2,0) -- (2, -5)
	(2.5, 0) -- (2.5, -5);
\node [above] at (-0.5,0) {$(\infty, \infty)$};
\node [above] at (5,0) {$\mathfrak{nl}_1$};
\node [left] at (0,-5) {$\mathfrak{nl}_2$};
\node[above] at (1.8, 0) {$\mathcal L$};
\node[above] at (2.8, 0) {$\mathcal L- \varepsilon''$};
\node[left] at (0, -1.8) {$\mathcal L$};
\node[left] at (0, -2.5) {$\mathcal L- \varepsilon''$};

\node[above] at (1, -1.4) {$\V_{(1,2,3)}$};
\node[above] at (4, -1.4) {$\V_{(4,3)}$};
\node[above] at (1, -4) {$\V_{(1,5)}$};
\node[above] at (4, -4) {$\V_{6}$};
\end{tikzpicture}
\caption{Corner structure. Suppose that $c(1,2)=4, c(2,3) = 5, c(4,3) = 6 = c(1,5)$.}
\label{fig: corner structure}
\end{figure}
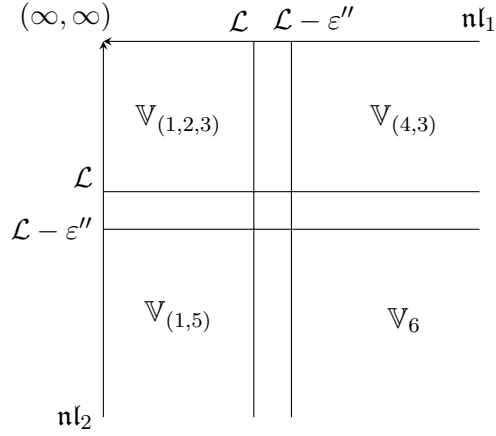

One can also view $\Kur$ as a functor from the index tuple category $\mathcal{I}$ to the ``category of Kuranishi charts".

Let $\Kur (\mathcal{M}_i)$ (also written as $\Kur (p,q;A)$ if $\mathcal{M}_i=\mathcal{M}(p,q;A)$) be the full subcategory of $\Kur$ with objects $I$ such that $c(I)=i$.

Given a section $\mathfrak{S}=\{\mathfrak{s}_I\}_{c(I)=(i)}$ of $\Kur (\mathcal{M}_i)$, we define
$$\mathcal{Z}(\Kur(\mathcal{M}_i),\mathfrak{S})=\left(\coprod_{c(I)=(i)} \overline\bdry^{-1}_I ({\mathfrak s}_I)\right)/\sim_{\Kur},$$
where $\sim_{\Kur}$ is the identification given by the morphisms.

We now come to an important point: {\em There is no reason to expect $\mathcal{Z}(\Kur(\mathcal{M}_i),\mathfrak{S})$ for an abstract semi-global Kuranishi structure to be a manifold, i.e., the Hausdorff property is not automatic.}  However, in our case the existence of the neck length functions implies the following analog of \cite[Lemma 8.8.1]{BH2}:

\begin{lemma}
$\mathcal{Z}(\Kur(\mathcal{M}_i),{\mathfrak S})$ is a manifold.
\end{lemma}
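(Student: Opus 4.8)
The plan is to show that $\mathcal{Z}(\Kur(\mathcal{M}_i),{\mathfrak S})$, which is a priori only a topological space obtained by gluing the manifolds $\overline\bdry^{-1}_I({\mathfrak s}_I)$ along the restriction-inclusion morphisms $\phi^\flat_{I',I}$, is in fact a manifold. Since each $\overline\bdry^{-1}_I({\mathfrak s}_I)$ is a manifold (by transversality of ${\mathfrak s}_I$ to $\overline\bdry_I$) and the morphisms $\phi^\flat_{I',I}$ are open embeddings compatible with the sections (conditions (1)--(2) in the definition of a section, together with (c) in the definition of the morphisms, which guarantees that $\phi^\flat_{I',I}$ identifies $\overline\bdry^{-1}_{I'}({\mathfrak s}_{I',I})$ with an \emph{open} submanifold of $\overline\bdry^{-1}_I({\mathfrak s}_I)$), the only thing that can fail is the Hausdorff property: two distinct points of the quotient might not be separable by disjoint opens. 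So the heart of the proof is a separation argument, exactly as in \cite[Lemma 8.8.1]{BH2}.

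First I would set up the combinatorial bookkeeping: a point of $\mathcal{Z}(\Kur(\mathcal{M}_i),{\mathfrak S})$ is represented by some $[u]\in \overline\bdry^{-1}_I({\mathfrak s}_I)$ for an index tuple $I$ with $c(I)=(i)$, and the equivalence $\sim_{\Kur}$ relates $[u]\in\V_I$ to $[u]\in \V_{I'}$ whenever $I'<I$ and $[u]$ lies in the overlap region $\V_{I',I}$ cut out by the neck-length conditions $\mathcal{L}-\varepsilon''\leq \mathfrak{nl}_{(\dots)}([u])\leq \mathcal{L}$ for all the relevant $i_j\in\delta(I,I')$. The key geometric input is that the neck-length functions $\mathfrak{nl}_{(i_1',i_2')}$ are \emph{globally defined} on $\cup_{c(I'')=(i)}\widetilde{\mathcal G}_\sigma(\V_{i_1''},\dots,\V_{i_k''})$ and are intrinsic to the map $u$ (they depend only on $u$ via the quantities $s^r_{u,\pm\varepsilon'}$, which are determined by the $a_r$-action), not on which chart one is viewing $u$ in. Therefore, given two representatives $[u]\in\V_I$ and $[u']\in\V_{I'}$ of allegedly distinct points that cannot be separated, one first argues that $u$ and $u'$, as points of $\mathcal B(p,q;A)/\R$ (or of the relevant $\widetilde{\mathcal G}_\sigma$), must coincide: any sequence in $\V_I$ converging to $[u]$ whose image in the quotient also converges to $[u']$ forces $u=u'$ by the Hausdorff property of $\mathcal B/\R$ and the fact that the $\psi$-maps are compatible. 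Then the neck-length values of this common map $u$ determine, via (C$_I$) and (C$_{I'}$) and the overlap description in (4), a canonical \emph{minimal} index tuple $I_0\leq$ both $I$ and $I'$ such that $[u]\in\V_{I_0,I}\cap\V_{I_0,I'}$; this exhibits the two representatives as being glued together in $\mathcal{Z}$, contradicting distinctness. In other words, the neck-length functions provide a well-defined ``stratification by breaking pattern'' that makes the gluing locally finite and the identifications consistent, which is precisely what rules out non-Hausdorff behavior.

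The main obstacle, and the step requiring genuine care, is this last point: verifying that the overlap regions $\V_{I',I}$ patch together \emph{coherently}, i.e., that the relation $\sim_{\Kur}$ generated by the $\phi^\flat_{I',I}$ is already an equivalence relation with closed graph, so that no chain $I\leftarrow I_0\rightarrow I'$ of identifications produces a limit point visible in one chart but not the other. This is where one invokes the strata compatibility conditions (4) and (5) — the neck-length functions and the iterated gluing maps — to show that the closure of $\overline\bdry^{-1}_I({\mathfrak s}_I)$ inside the ambient quotient, intersected with any other chart, lands exactly in the prescribed overlap $\V_{I',I}$, with matching neck-length values. Concretely one takes a sequence $[u_n]\to[u]$ in one chart and shows its image converges in the quotient to a unique point, using that as $[u_n]$ approaches a deeper stratum some $\mathfrak{nl}_{i_j}([u_n])\to\infty$, at which point the iterated gluing map $\mathfrak G$ (condition (5)(g), Theorem~\ref{thm: iterated gluing}) provides the chart transition with $C^1$-small error, and the section compatibility (3) in the definition of a section of $\Kur$ ensures the zero sets match up in the limit. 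I would present the argument by reduction to the simplest case of Section~\ref{subsubsection: simplest case} (two adjacent charts $\V_3$ and $\V_{(1,2)}$ overlapping on $\mathcal{L}-\varepsilon''<\mathfrak{nl}<\mathcal{L}$), where the claim is transparent, and then remark that the general case follows by induction on the number of breaking levels using Theorem~\ref{thm: iterated gluing}, exactly parallel to \cite[Lemma 8.8.1]{BH2}.
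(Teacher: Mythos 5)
Your proposal is correct and follows essentially the same route as the paper: the paper's proof is simply a citation of \cite[Lemma 8.8.1]{BH2}, and the remark immediately preceding the lemma makes clear that the intended argument is exactly yours — local manifold structure from transversality of $\mathfrak{s}_I$ to $\overline\bdry_I$ plus openness of the restriction-inclusion morphisms, with the only real issue being Hausdorffness, which is ruled out because the neck-length functions are intrinsic to the map $u$ and hence pin down a canonical common chart for any two unseparable representatives.
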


\begin{proof}
Same as that of \cite[Lemma 8.8.1]{BH2}.
\end{proof}

\n \cbu
{\em Taking limits.}  So far we have constructed a semi-global Kuranishi structure $\mathcal{K}$ and a section $\mathfrak{S}$ for the $G$-invariant finite set $\{\mathcal{M}_1,\dots, \mathcal{M}_N\}$. If $\{\mathcal{M}_1,\mathcal{M}_2,\dots\}$ is infinite, we choose increasing sequences $N_1, N_2, \dots \to \infty$ and $\ell_1,\ell_2,\dots \to \infty$ of integers such that $\{\mathcal{M}_1,\dots, \mathcal{M}_{N_j}\}$ is $G$-invariant for each $N_j$ and construct $\mathcal{K}^{(j)}$ such that the fibers of the obstruction bundles $\E_i^{(j)}\to \V_i^{(j)}$ (i.e., $\E_i\to \V_i$ for $j$) are $e^{q, \ell_j}$, where $q={\bf t}(\mathcal{M}_i)$. Since $e^{q,\ell_j}$ naturally includes into $e^{q,\ell_{j+1}}$, there are natural inclusions
$$
\begin{tikzcd}
\E_{I}^{(j)} \arrow[r] \arrow[d] & \E_I^{(j+1)} \arrow[d]\\
\V_{I}^{(j)}   \arrow[r] &  \V_I^{(j+1)}
\end{tikzcd}
$$
that commute with the morphisms $\phi_{I',I}^{(j)}$ and $\phi_{I',I}^{(j+1)}$.  Assuming we have already constructed the section $\mathfrak{S}^{(j)}$, we construct $\mathfrak{S}^{(j+1)}$ such that $\mathfrak{s}_I^{(j+1)}$ is the image of $\mathfrak{s}_I^{(j)}$ under the appropriate inclusions $\oplus_q e^{q,\ell_j} \to \oplus_q e^{q,\ell_{j+1}}$ whenever the entries of $I$ are $\leq N_j$. This is sufficient to ensure that, for $i\leq N_j$, there is a natural identification
$$\mathcal{Z}(\Kur^{(j)}(\mathcal{M}_i),{\mathfrak S}^{(j)})\simeq  \mathcal{Z}(\Kur^{(j+1)}(\mathcal{M}_i),{\mathfrak S}^{(j+1)}).$$
We write $\mathcal{Z}(\Kur(\mathcal{M}_i),{\mathfrak S})$ for any of the $\mathcal{Z}(\Kur^{(j)}(\mathcal{M}_i),{\mathfrak S}^{(j)})$ such that $i\leq N_j$.

\s {\em From now on we will assume $\{\mathcal{M}_1,\mathcal{M}_2,\dots\}$ is finite, making the appropriate modifications as above, if it is not.}

\s\n
{\em Implicit charts.} 
Our semi-global Kuranishi structure $\Kur(\mathcal{M}_i)$, $\mathcal{M}_i=\mathcal{M}(p_i,q_i;A_i)$, can be converted into a {\em single} global implicit chart in the sense of Pardon~\cite{Pa}. Let $\mathcal{S}(p_i,q_i)$ be the set of all the $q_j$ that appear before $(p_i, q_i, A_i)$ in the list \eqref{ordering of triples} and take the global fiber to be 
$$e(p_i,q_i):= \oplus_{r\in \mathcal{S}(p_i,q_i)} e^{r,\ell}.$$ \cb

We consider solutions $(u,\xi)$,
$$u\in \cup_{c(i_1,\dots,i_k)=(i)} \widetilde {\mathcal G}_{\sigma}(\V_{i_1},\dots,\V_{i_k}), \quad \xi=(\xi_r)_{r\in \mathcal{S}(p_i,q_i)}\in e(p_i,q_i),$$
to the equation
\begin{equation}
\overline\bdry u=\sum_{r\in \mathcal{S}(p_i,q_i)} (\zeta\circ \mathfrak{nl}_r(u))\cdot \xi_r,
\end{equation}
where $\zeta$ and $\mathfrak{nl}_r$ are as given in Definitions~\ref{defn: zeta} and \ref{defn: nl two}.  Roughly speaking, we turn off the perturbations for $e^{r,\ell}$ when $\mathfrak{nl}_r(u)\leq \mathcal{L}$ but still remember the data for $e^{r,\ell}$.

\subsection{Equivariant semi-global Kuranishi structures for chain maps and chain homotopies} \label{subsection: chain map and chain homotopy}

\subsubsection{Chain maps} \label{subsubsection: chain maps}

Let $H_s:M\to \R$, $s\in[0,1]$, be a compactly supported, time-dependent, $G$-invariant Hamiltonian function and let $\phi_s$, $s\in[0,1]$, be the corresponding $1$-parameter family of Hamiltonian symplectomorphisms of $(M,\omega)$ with $\phi_0=\op{id}$; we call such a $\phi_s$ a {\em $G$-equivariant Hamiltonian isotopy}.  Writing $L_i'=\phi_1(L_i)$, $i=0,1$, we assume that $L_0'\pitchfork L_1'$.
Let $\{J^s\}_{s\in [0,1]}$ be a $1$-parameter family of almost complex structures that are $\omega$-compatible, $G$-invariant, and satisfy \ref{condition: J}.

Define a smooth function $\vartheta_0:\R\to [0,1]$ such that $\vartheta_0(s)=0$ for $s\leq 0$ and $\vartheta_0(s)=1$ for $s\geq 1$.

Given $p\in L_0\cap L_1$, $q\in L_0'\cap L_1'$, and $A\in \pi_2(p,q)$, let $\mathcal{M}^\circ(p,q;A)$ (we are
suppressing $\{J^s\}$) be the space of smooth maps $u:\R\times[0,1]\to M$ that
satisfy \ref{A3} and \ref{A4}, in addition to:
\be
\item[(A1$'$)] $u_s(s,t) + J^{\vartheta_0(s)}(u(s,t)) u_t(s,t) = 0$, and
\item[(A2$'$)] $u(s,0)\in \phi_{\vartheta_0(s)}(L_0)$ and $u(s,1)\in \phi_{\vartheta_0(s)}(L_1)$.
\ee
\cb
When we are defining chain maps and chain homotopies, the moduli spaces for $(L_0,L_1)$ will have superscripts $-$ as in $\mathcal{M}^-(p,p'; A)$ and the moduli spaces for $(L_0',L_1')$ will have superscripts $+$ as in $\mathcal{M}^+(q,q'; A)$.  

The construction of the Kuranishi charts and the Kuranishi structure from Sections~\ref{charts} to \ref{subsection: equivariant semi global} carry over with very few modifications: Under our assumptions there are finitely many moduli spaces of type $\mathcal{M}^\circ(p,q;A)$, $\mathcal{M}^-(p,q;A)$, and $\mathcal{M}^+(p,q;A)$, which we list as
$$\mathcal{M}_1,\dots, \mathcal{M}_\rho$$
as before so that $\omega(A)$ is in nondecreasing order. The {\em type} of $\mathcal{M}_i$ is given by the superscript $\circ$, $-$, or $+$.

\begin{defn} \label{defn: c index tuple}
A tuple $I=(i_1,\dots,i_k)$ is a {\em $c$-index tuple} (where $c$ stands for chain map), if it satisfies the conditions of Definition~\ref{defn: index tuple} and
\begin{itemize}
\item there exists $i_j$ such that $\mathcal{M}_{i_j}$ has type $\circ$ and all $i_l$ with $l<j$ have type $-$ and all $i_l$ with $l>j$ have type $+$.
\end{itemize}
\end{defn}

The charts $(\pi_I:\E_I\to \V_I,\overline\bdry_I, \psi_I)$ are constructed in exactly the same way as before, where $I$ is now a $c$-index tuple.  By construction the Kuranishi structure is $G$-invariant.

\subsubsection{Chain homotopies} \label{subsubsection: chain homotopies}

Fix $T\gg 0$.  We define a smooth function
$$\Theta:  \R\times[0,1]\to [0,1]$$
with coordinates $(s,\tau)$ for $\R\times [0,1]$ such that:
\begin{itemize}
\item $\Theta(s,0)=1$ for $s\in [-T+1,T-1]$;
\item $\Theta(s,1)=0$ for all $s$;
\item $\Theta(s,\tau)=0$ for all $s>T$ and $s<-T$ and $\tau\in[0,1]$.
\end{itemize}

For each $\tau\in[0,1]$, let $\mathcal{M}^\circ_\tau(p,q;A)$ be the space of smooth maps $u:\R\times[0,1]\to M$ that satisfy \hyperlink{A3}{(A3)}, \hyperlink{A4}{(A4)},
\be
\item[(A1$^\tau$)] $u_s(s,t) + J^{\Theta(s,\tau)}(u(s,t)) u_t(s,t) = 0$, and
\item[(A2$^\tau$)] $u(s,0)\in \phi_{\Theta(s,\tau)}(L_0)$ and $u(s,1)\in \phi_{\Theta(s,\tau)}(L_1)$.
\ee
We also write
$$\mathcal{M}^\circ_{\{\tau\}}(p,q;A)=\coprod_{\tau\in[0,1]}\mathcal{M}^\circ_\tau(p,q;A).$$

For each $c$-index tuple and each $\tau\in[0,1]$ we construct a chart
$$(\pi_{I,\tau}: \E_{I,\tau}\to \V_{I,\tau}, \overline\bdry_{I,\tau}, \psi_{I,\tau}),$$
which can be combined into a family
$$(\pi_{I,[0,1]}: \E_{I,[0,1]}\to \V_{I,[0,1]}, \overline\bdry_{I,[0,1]},\psi_{I,[0,1]}).$$
By construction the family of Kuranishi structures is $G$-invariant.

\section{Orientations}\label{section: orientations}

The goal of this section is to review the definition of a coherent (= compatible with gluing) system of orientations on the moduli space of (finite energy) $J$-holomorphic strips for a pair $(L_0,L_1)$ of Lagrangians, following \cite{FO3} and then adapt it to the case with a $G$-action. \cbu We will see that in general, $g\in G$ only preserves the orientation of $\mathcal M (p,q;A)$ up to a sign $\sigma(g,p,q) \in \{-1, 1\}$ that is independent of $A$.
But this is enough to define a $G$-action on the $CF^\bullet(L_0, L_1).$
\cb

\subsection{Cauchy-Riemann tuples}

A {\em Cauchy-Riemann tuple} \label{section: CR tuple} is a quadruple $(\Sigma, \xi,\eta, D)$ satisfying (CR1)--(CR4):
\be
\item[(CR1)] $\Sigma=B\backslash X$, where $B$ is the closed unit disk in $\C$ and $X$ is a finite subset of $\bdry B$.
\ee
For each $x\in X$, let $I_x\subset \bdry B$ be a small interval neighborhood of $x$ and let $I_{x-}$ and $I_{x+}$ be the two connected components of $I_x \backslash x$.
\be
\item[(CR2)] $\xi$ is a trivial $\mathbb C$-vector bundle over $\overline \Sigma = B$.
\item[(CR3)] $\eta$ is a real subbundle of $\xi|_{\partial \Sigma - X}$ such that $\eta|_{I_{x\pm}}$ extends smoothly to a real subspace $\eta_{x \pm}\subset \xi_x$ over $x$. Moreover, $\xi_x = \eta_{x+} \oplus \eta_{x-}$.
\ee
Let $\Gamma(\Sigma, \xi)$ be the space of compactly supported smooth sections of $\xi|_\Sigma$ that restrict to sections of $\eta$ along $\partial \Sigma \backslash X$.  For each $x\in X$, choose a neighborhood $N(x)\subset B$ and a holomorphic identification of $\Sigma\cap N(x)$ with a strip-like end $[0,\infty)\times[0,1]$ with coordinates $(s,t)$. Let $W^{k+1,p}(\Sigma,\xi)$ be the closure of $\Gamma(\Sigma, \xi)$ in the $W^{k+1,p}$-norm with respect to a metric on $\Sigma$ consistent with the strip-like ends and a metric on $\xi$. The space $W^{k,p}(\Sigma, \wedge^{0,1}\Sigma \otimes_{\mathbb C}\xi)$ is defined similarly.
\be
\item[(CR4)] The operator $D: W^{k+1,p}(\Sigma,\xi) \to W^{k,p}(\Sigma, \wedge^{0,1}\Sigma \otimes_{\mathbb C}\xi)$
is a real-linear Cauchy-Riemann operator such that on each strip-like end
$$Dw = \tfrac{1}{2}(\nabla_s w + J \nabla_t w)\otimes (ds + idt),$$
where $J$ is the complex structure on $\xi$ and $\nabla$ is a connection of $\xi$.
\ee
See \cite[Appendix C]{MS2} for the definition of a real-linear Cauchy-Riemann operator over a compact Riemann surface.

\subsection{Auxiliary orientation data}

Recall the {\em determinant line} of $(\Sigma, \xi,\eta, D)$ is a $1$-dimensional vector space defined by
$$\det D:=\wedge^{\op{top}} \op{ker} D \otimes_\R  \wedge^{\op{top}}(\op{coker}D )^*.$$

Let $\pi:\E(p,q;A) \to \V(p,q;A)$ be an interior semi-global Kuranishi chart for ${\mathcal M}(p,q;A)$. Given $u$ with $[u] \in \V$, we define the Cauchy-Riemann tuple
$$(\Sigma^u, \xi^u, \eta^u, D^u) := (S, u^*TM, \sqcup_{i\in \{0,1\}} u(\cdot,i)^* TL_i, D_u),$$ where $S=\mathbb R \times [0,1]$ and $D^u$ is the linearized $\overline \partial$-operator at $u$.

A coherent system of orientations $\mathfrak{o}(D^u)$ of $\det D^u$ will depend on the following {\em auxiliary orientation data}; see Theorem~\ref{thm: orientation fo3}.

\begin{defn}
A choice of {\em auxiliary orientation data} consists of:
\be
\item[(O1)] \hypertarget{O1} a relative spin structure for the pair $(L_0, L_1)$;
\ee
and for each $p\in L_0 \cap L_1$,
\be
\item[(O2)] \hypertarget{O2} a capping Lagrangian path;
\item[(O3)] \hypertarget{O3} a capping orientation; and
\item[(O4)] \hypertarget{O4} a stable capping trivialization.
\ee
\end{defn}

We will explain (O2) and (O3), leaving (O1) and (O4) for the next subsection.

A {\em capping Lagrangian path} (O2) is a path $\{\mathcal L_{p,t}\}_{0\leq t \leq 1} $ in the oriented Lagrangian Grassmannian $\op{Lag}(T_p M,\omega_p)$ such that $\mathcal L_{p,i} = T_pL_i$ with orientations, for $i = 0, 1$.

For each $p \in L_0 \cap L_1$, we define a Cauchy-Riemann tuple $(\Sigma^{p+}, \xi^{p+}, \eta^{p+}, D^{p+})$ as follows: Let $\Sigma^{p+}$ be the closed unit disk in $\C$ with one boundary puncture, identified with the upper half plane $\H=\{z ~|~ \op{Im} z \geq 0\}$, and let $\pi_p: \Sigma^{p+} \to M$ be the constant map to $p$. We then define:
\begin{itemize}
\item $\xi^{p+} =\pi_p^*(T_pM)$,
\item $\eta^{p+}_z= {\mathcal L}_{p,0}$ for $z\in (-\infty, 0)$, $\eta^{p+}_z={\mathcal L}_{p,z}$ for $z \in [0,1]$, and $\eta^{p+}_z = {\mathcal L}_{p,1}$ for $z \in (1,+\infty)$, and
\item $D^{p+}$ is a fixed real linear Cauchy-Riemann operator (the choice is unique up to homotopy).
\end{itemize}
We can similarly choose the Cauchy-Riemann tuple $(\Sigma^{p-}, \xi^{p-}, \eta^{p-}, D^{p-})$ by swapping the roles of $L_0$ and $L_1$.

Finally, a {\em capping orientation} (O3) is a choice of orientation $\mathfrak{o}(D^{p+})$ (but not $\mathfrak{o}(D^{p-})$).

\subsection{Relative spin structures} \label{subsection: relative spin structures}

A pair $(L_0, L_1)$ is {\em relatively spin} if there exists $st \in H^2(M; \Z/2)$ such that $w_2(TL_i) = \iota_i^* st$ for $i = 0,1$: Fix a triangulation $\tau$ of $M$ such that $L_0$, $L_1$, and $L_0 \cap L_1$ are subcomplexes. Choose an oriented real vector bundle $\nV$ of rank $\geq 2$ on the $3$-skeleton $M^{(3)}$ of $M$ such that $w_2(\nV)= st$. (Here we are using the notation $X^{(i)}$ for the $i$-skeleton of a triangulation of $X$.) Then the bundle $TL_i |_{L_i^{(2)}} \oplus \nV|_{L_i^{(2)}}$ is spin and hence is a trivial bundle. Choosing a spin structure is equivalent to choosing a homotopy class of trivializations ${\mathfrak t}_i$ of $TL_i |_{L_i^{(1)}} \oplus \nV|_{L_i^{(1)}}$ that extends to $L_i^{(2)}$. Since $\pi_2(SO(m))=0$ for $m\geq 3$, the extension to $L_i^{(2)}$ is unique and ${\mathfrak t_i}$ also extends to $L_i^{(3)}$.  We will refer to choices of $\tau$, $V$, and homotopy classes of ${\mathfrak t}_i$, $i=0,1$, as a {\em relative spin structure}; see \cite[Section 8.1]{FO3} for an explanation of when two relative spin structures are equivalent. 
\cbu
A more algebraic (and cleaner) definition of a relative spin structure can be found in \cite[Section 3.1]{WW} and \cite{Sc}.
\cb

Let $\widetilde{\mathcal{L}}_p\to[0,1]$ be a vector bundle whose fiber over $t\in[0,1]$ is $\mathcal{L}_{p,t} \oplus V_p$.
Then a {\em stable capping trivialization} (O4) is a trivialization $\widetilde {\mathfrak t}_p$ of $\widetilde{\mathcal{L}}_p$ that agrees with the trivializations ${\mathfrak t}_i$ of $(TL_i |_{L_i^{(1)}} \oplus \nV|_{L_i^{(1)}})|_p$ that we have already chosen for $i=0,1$.

\subsection{Coherent orientation system}

We review the following theorem from \cite[Section 8.1]{FO3}:

\begin{thm} \label{thm: orientation fo3}
The moduli space of $J$-holomorphic strips admits a coherent orientation system if the pair of Lagrangians $(L_0, L_1)$ is relative spin. Moreover, the choice of auxiliary orientation data \hyperlink{O1}{(O1)}--\hyperlink{O4}{(O4)} determines the orientation.
\end{thm}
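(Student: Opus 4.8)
The plan is to follow \cite[Section~8.1]{FO3} and reduce the statement to the behaviour of determinant lines of real-linear Cauchy--Riemann operators under gluing. First I would record the \emph{linear gluing theorem}: if two Cauchy--Riemann tuples have a matching pair of strip-like ends, then gluing along them produces a Cauchy--Riemann tuple $D_1\# D_2$ together with a canonical isomorphism $\det(D_1\# D_2)\cong \det D_1\otimes_\R \det D_2$ of determinant lines, which is continuous in the input data, independent of the (large) gluing length, and associative; see \cite[Appendix~C]{MS2}. I would also isolate two base cases: the determinant line of a Cauchy--Riemann operator on the disk with the constant totally real boundary condition $\R^n\subset\C^n$ is canonically oriented (its kernel is the space of constant sections, its cokernel vanishes); and each capping operator $D^{p\pm}$ associated to the capping Lagrangian path \hyperlink{O2}{(O2)} is well defined up to the contractible choice of operator within its homotopy class, so $\det D^{p\pm}$ is canonically a fixed line.

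Next I would set up the capping construction. For $p,q\in L_0\cap L_1$ and $[u]\in\mathcal M_J(p,q;A)$ the capping paths \hyperlink{O2}{(O2)} are chosen precisely so that $D^{p+}$ glues to $D_u$ at its $s\to+\infty$ end and $D^{q-}$ glues at its $s\to-\infty$ end, producing a Cauchy--Riemann tuple $\widehat D_u$ on the closed disk whose boundary Lagrangian loop runs along $TL_0$ from $T_qL_0$ to $T_pL_0$, then along the capping path at $p$ to $T_pL_1$, then along $TL_1$ from $T_pL_1$ to $T_qL_1$, then back along the reversed capping path at $q$. Linear gluing then gives a canonical isomorphism, continuous in $[u]$,
$$\det D_u\;\cong\;(\det D^{q-})^{*}\otimes_\R \det\widehat D_u\otimes_\R (\det D^{p+})^{*},$$
so an orientation of $\mathcal M_J(p,q;A)$ is produced from compatible orientations of the three factors. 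The capping orientation \hyperlink{O3}{(O3)} supplies $\mathfrak o(D^{p+})$. The factor $\det D^{q-}$ is oriented by gluing $D^{q+}$ to $D^{q-}$ along their punctures: the resulting disk operator has a \emph{contractible} boundary Lagrangian loop, so (as in the next paragraph) the relative spin data orients its determinant line, and via \hyperlink{O3}{(O3)} and linear gluing this determines $\mathfrak o(D^{q-})$ from $\mathfrak o(D^{q+})$.

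The technically central point, which I expect to be the main obstacle, is to orient $\det\widehat D_u$ using the relative spin structure. Adding the stabilizing bundle $\nV$, the boundary bundle of $\widehat D_u$ becomes $\ell^{*}TM\oplus\nV$ over the boundary circle, and it is trivialized by patching: ${\mathfrak t}_0$ along the $L_0$-arc, ${\mathfrak t}_1$ along the $L_1$-arc, and the stable capping trivializations \hyperlink{O4}{(O4)} at $p$ and $q$ along the capping-path arcs, which agree at the four junction points by construction. The relative spin condition $w_2(TL_i)=\iota_i^*st$ is exactly what is needed for this boundary trivialization to be compatible with a trivialization of the bundle of $\widehat D_u$ over the whole disk that puts the boundary condition into standard form; since $\nV$ only needs to exist over the $3$-skeleton, there is no difficulty over the $2$-dimensional disk. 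Via the disk base case this canonically orients $\det\widehat D_u$ continuously in $[u]$. It then remains to check: (i) independence of the choices not listed in \hyperlink{O1}{(O1)}--\hyperlink{O4}{(O4)}---representatives of $D^{p\pm}$ within their homotopy classes, metrics, gluing lengths---using connectedness of the parameter spaces and continuity of determinant line bundles; (ii) \emph{coherence}, i.e.\ that under the gluing $\mathcal M_J(p,r;A_1)\times\mathcal M_J(r,q;A_2)\to\bdry\mathcal M_J(p,q;A_1+A_2)$ the induced orientation agrees with the product orientation up to the standard Koszul sign fixed by the Fredholm indices, which follows from associativity of determinant-line gluing together with the fact just established that $D^{r+}\# D^{r-}$ is canonically oriented; and (iii) the passage from $\widetilde{\mathcal M}_J$ to $\mathcal M_J=\widetilde{\mathcal M}_J/\R$, by splitting off the $\R$-translation direction of the domain with a fixed sign convention. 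Everything beyond the relative-spin orientation of the disk determinant line, and the ensuing sign bookkeeping through iterated gluing, is a continuity-and-connectedness argument.
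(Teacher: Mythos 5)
Your proposal follows essentially the same route as the paper's sketch: cap off $D_u$ with the capping operators $D^{p+}$, $D^{q-}$, orient the resulting closed-disk operator by stabilizing with $\nV$ and trivializing the boundary condition via $\mathfrak t_0$, $\mathfrak t_1$ and the stable capping trivializations, invoke the basic fact that a trivialized totally real boundary condition on the disk canonically orients the determinant line, and fix $\mathfrak o(D^{q-})$ by gluing $D^{q+}$ to $D^{q-}$. The only (harmless) deviation is that for the doubled capping operator $D^{q+,q-}$ the paper obtains the trivialization tautologically from a single trivialization of the capping path $\mathcal L_q$ rather than from the relative spin data, and in orienting $\det\widehat D_u$ you should make explicit that one splits off the canonically oriented factor $\det D^{p+,u,q-}_{\nV}$ (using the orientation of $\nV$) from the stabilized determinant line.
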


We give a sketch of the proof, partly to establish notation.
The fundamental fact that we use is the following (cf.\ \cite[Proposition 34.3]{FO3}), stated without proof.

\begin{fact} \label{prop: orientation for disk}
Given a Cauchy-Riemann tuple $(\Sigma, \mathbb C^n,\eta, D)$,
if $\Sigma$ has no punctures and $\eta$ is trivial, then any trivialization of $\eta$ canonically determines an orientation of $\det D$.
\end{fact}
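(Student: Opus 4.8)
The plan is to reduce the assertion to a single model Cauchy--Riemann operator and transport the orientation. First I would recall that, with the topological data $(\Sigma,\xi,\eta)$ fixed, the space of real-linear Cauchy--Riemann operators on $(\Sigma,\xi,\eta)$ is a nonempty affine space, hence contractible, and that over it the determinant lines $\det D$ assemble into a real line bundle (by the standard family-index/local-stabilization construction). Contractibility of the base forces this line bundle to be trivial, so an orientation of $\det D$ for a single operator propagates canonically to every operator with the same boundary data. I would then deform, simultaneously, the operator $D$ (through such operators) and the bundle pair $(\xi,\eta)$ (through bundle pairs over $(\Sigma,\partial\Sigma)=(B,\partial B)$) to a convenient model. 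Since $B$ is contractible we may trivialize $\xi$, so that $\eta$ becomes a totally real subbundle of $\underline{\C^{n}}|_{\partial B}$ of some Maslov index $\mu$; the hypothesis that $\eta$ is a trivial real bundle over $S^{1}$ forces $\mu$ to be even, and then $\eta$ is homotopic, through totally real subbundles, to the diagonal model $\op{diag}(e^{ik_{1}\theta},\dots,e^{ik_{n}\theta})\,\underline{\R^{n}}$ with integers $k_{j}$ summing to $\mu/2$. For the model one takes $D=\overline{\partial}$; after permuting the summands so that every $k_{j}\geq 0$, $\overline{\partial}$ is surjective, so $\op{coker}D=0$ and $\det D=\wedge^{\op{top}}\op{ker}D$, with $\op{ker}D$ an explicit finite-dimensional space of holomorphic sections on the compact disk --- this is the step where the absence of punctures is used.

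Next I would manufacture the orientation out of the trivialization. Complexifying the given trivialization of $\eta$ via the canonical identification $\eta\otimes_{\R}\C\cong\xi|_{\partial\Sigma}$, and carrying it along the homotopy to the diagonal model, produces in the model a well-defined homotopy class of ordered, oriented real frames; from such a frame one reads off a preferred ordered basis of $\op{ker}\overline{\partial}$ (in the rank-one summand with parameter $k$, the real span of the holomorphic sections whose Taylor coefficients obey the reflection relation $a_{j}=\overline{a}_{2k-j}$), and hence an orientation of $\det D$. A cleaner route to write out uses instead the gluing isomorphism of determinant lines under gluing of Cauchy--Riemann operators along boundary arcs: it reduces the model to a direct sum of (i) the standard disk with the constant real boundary condition $\underline{\R^{n}}$, for which $\op{ker}\overline{\partial}=\R^{n}$ (the real constants) and $\op{coker}\overline{\partial}=0$, so that $\det D=\wedge^{n}\R^{n}$ is oriented directly by the trivialization, and (ii) finitely many standard Maslov-$(\pm 2)$ disks arranged to cancel in pairs, whose joint determinant lines are canonically oriented by the gluing axiom.

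The remaining --- and by far the hardest --- step is well-definedness: one must verify that the resulting orientation of $\det D$ depends only on the homotopy class of the chosen trivialization of $\eta$, and not on any of the auxiliary choices (the trivialization of $\xi$ over $B$, the homotopy of $\eta$ to the diagonal model, the ordering of the summands, the choice of model operator, and --- in the gluing formulation --- the universal sign in the gluing isomorphism, which must in addition be associative and compatible with the cancellation of Maslov-$(\pm2)$ pairs). The set of homotopy classes of trivializations of $\eta$ and the set of orientations of $\det D$ are both governed by $\Z/2$ (through $\pi_{1}(SO(n))$ on the one side and tautologically on the other), so it suffices to check that altering the trivialization by the nontrivial class reverses the induced orientation while none of the other choices affects it. I expect this naturality and sign bookkeeping --- essentially the content of \cite[Section~8.1]{FO3} (cf.\ \cite{Se, So}) --- to be the main obstacle; once it is in place, the rest of the argument is formal.
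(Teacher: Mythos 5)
First, a point of comparison: the paper does not prove this statement at all --- it is introduced as ``the fundamental fact that we use \dots stated without proof,'' with a pointer to \cite[Proposition 34.3]{FO3}. So your proposal cannot be measured against an in-paper argument, only against the standard one being cited. Your skeleton is that standard argument: contractibility of the space of Cauchy--Riemann operators on a fixed $(\Sigma,\xi,\eta)$ makes the determinant lines into a trivial real line bundle, so it suffices to orient one model operator, and the frame of $\eta$ is what orients the model. You also correctly isolate where the real work lies (well-definedness under all auxiliary choices, i.e.\ that only the $\Z/2$ of homotopy classes of trivializations matters), which is exactly what \cite{FO3} spends its effort on.

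The concrete gap is in how you orient the model, in both of your routes. In the ``cleaner'' gluing route, the claim that the leftover pieces are Maslov-$(\pm 2)$ disks ``arranged to cancel in pairs'' is false in general: the Maslov index of $\eta$ relative to the given trivialization of $\xi=\underline{\C^n}$ over $B$ is an arbitrary even integer $2m$ (for instance $\eta_{e^{i\theta}}=e^{i\theta}\R\subset\C$ is a trivial real line bundle with $\mu=2$), and canceling pairs only account for $m=0$. The standard repair is different: one pinches off a closed sphere carrying $c_1=m$, whose operator is complex-linear up to compact perturbation and hence canonically oriented, leaving exactly one disk with the constant boundary condition $\R^n$ whose kernel $\R^n$ is oriented by the frame. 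In your first route the same issue resurfaces as the unfinished step ``one reads off a preferred ordered basis'': for $k>0$ the kernel of $\overline{\partial}$ on $(\C,e^{ik\theta}\R)$ is $(2k+1)$-dimensional, and you must actually say how a frame of a real line bundle orients it (e.g.\ via $u\mapsto(a_0,\dots,a_{k-1};a_k)\in\C^{k}\oplus\R$, orienting $\C^{k}$ complex-linearly and the middle real coefficient by the frame at a boundary basepoint) and then check compatibility with the degenerations used elsewhere. Until one of these is carried out --- or the sphere-capping version is substituted --- the proposal is a scheme for an orientation rather than an orientation; deferring the final well-definedness check to \cite[Section 8.1]{FO3} is reasonable given that the paper defers the entire statement there.
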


\begin{proof}[Sketch of proof of Theorem~\ref{thm: orientation fo3}] $\mbox{}$

\s\n {\em Step 1.}
Let $(\Sigma^1,\xi^1,\eta^1,D^1)$ and $(\Sigma^2,\xi^2,\eta^2,D^2)$ be two Cauchy-Riemann tuples.
Given punctures $x_1 \in \partial \Sigma^1$ and $x_2 \in \partial \Sigma^2$, suppose there is a $\C$-linear isomorphism $\Phi: \xi^1_{x_1} \stackrel\sim\longrightarrow \xi^2_{x_2}$
that maps $\eta^1_{x_1\pm}$ to $\eta^2_{x_2 \mp}$.
Then there is an associated Cauchy-Riemann tuple $(\Sigma^{1,2},\xi^{1,2},\eta^{1,2},D^{1,2})$ defined by a straightforward pregluing which identifies $x_1$ and $x_2$ and the orientations of $\det D^1$ and $\det D^2$ induce an orientation of $\det D^{1,2}$.

In particular, if we preglue $(\Sigma^{q+}, \xi^{q+}, \eta^{q+}, D^{q+})$ and $(\Sigma^{q-}, \xi^{q-}, \eta^{q-}, D^{q-})$, we obtain the Cauchy-Riemann tuple $(\Sigma^{q+,q-}, \xi^{q+,q-}, \eta^{q+,q-}, D^{q+,q-})$ and it has a canonical orientation by Fact~\ref{prop: orientation for disk}. (Here we are taking the trivializations of $\eta^{q+}$ and $\eta^{q-}$ to come from the same trivialization of $\mathcal{L}_p$; then the trivialization of $\eta^{q+,q-}$ is independent of the choice of trivialization of $\mathcal{L}_p$.)  Hence the capping orientation $\mathfrak{o}(D^{q+})$ determines $\mathfrak{o}(D^{q-})$.

For any $u$ with $[u] \in \V(p,q, A)$, we preglue
$$(\Sigma^{p+}, \xi^{p+}, \eta^{p+}, D^{p+}),\quad (\Sigma^u, \xi^u, \eta^u, D^u),\quad (\Sigma^{q-}, \xi^{q-}, \eta^{q-}, D^{q-})$$
along $p$ and $q$ to obtain
$$(\Sigma^{p+,u,q-}, \xi^{p+,u,q-}, \eta^{p+,u,q-}, D^{p+,u,q-}).$$
If we can orient $\det (D^{p+,u,q-})$, then $\mathfrak{o}(D^u)$ is determined by ${\mathfrak o}(D^{p+,u,q-})$ and the capping orientations $\mathfrak{o}(D^{p+})$ and $\mathfrak{o}(D^{q-})$.

\s\n {\em Step 2.}
By the simplicial approximation theorem, after a homotopy we can assume that $u(\Sigma) \subseteq M^{(2)}$ and $u (\partial \Sigma) \subseteq L_0^{(1)}\cup L_1^{(1)}$.  Let $V\to M^{(3)}$ be the bundle appearing in the definitions of (O1) and (O4).

Define Cauchy-Riemann tuples
$$(\Sigma^{p+}, \xi_{\nV}^{p+}, \eta_{\nV}^{p+}, D_{\nV}^{p+}), \quad(\Sigma^u, \xi_{\nV}^u, \eta_{\nV}^u, D_{\nV}^u), \quad (\Sigma^{p-}, \xi_{\nV}^{p-}, \eta_{\nV}^{p-}, D_{\nV}^{p-})$$
in the same way as the versions without $V$, except that we replace $TM$ by $\nV \oplus i \nV$, $TL_i$ by $V$ for $i=0,1$, and $\mathcal L_{p,t}$ by $\nV_p$.
By pregluing as in Step 1, we obtain
$$(\Sigma^{p+,u,q-}, \xi_{\nV}^{p+,u,q-}, \eta_{\nV}^{p+,u,q-}, D_{\nV}^{p+,u,q-}).$$
A key point to observe now is that, since $\nV$ is oriented and defined over $u(\Sigma)$, there is a canonical equivalence class of trivializations of $\eta_{\nV}^{p+,u,q-}$ and hence a canonical orientation of $\det D_{\nV}^{p+,u,q-}$ by Fact~\ref{prop: orientation for disk}.

We take the direct sum of
$$(\xi^{p+,u,q-}, \eta^{p+,u,q-}, D^{p+,u,q-}) \quad \mbox{ and } \quad (\xi_{\nV}^{p+,u,q-}, \eta_{\nV}^{p+,u,q-}, D_{\nV}^{p+,u,q-}),$$
over $\Sigma^{p+,u,q-}$ to obtain
$$(\Sigma^{p+,u,q-}, \xi^{p+,u,q-}\oplus \xi_{\nV}^{p+,u,q-}, \eta^{p+,u,q-}\oplus \eta_{\nV}^{p+,u,q-}, D^{p+,u,q-}\oplus D_{\nV}^{p+,u,q-}).$$
Now $\mathfrak{t}_i$, $\widetilde{\mathfrak t}_p$, and $\widetilde{\mathfrak t}_q$ give a trivialization of $\eta^{p+,u,q-}\oplus \eta_{\nV}^{p,u,q-}$, so  $\det (D^{p+,u,q-}\oplus D_{\nV}^{p+,u,q-})$ is canonically oriented by Fact~\ref{prop: orientation for disk}. Since $\det (D^{p+,u,q-}\oplus D_{\nV}^{p+,u,q-})$ is canonically isomorphic to $\det D^{p+,u,q-}\otimes \det D_{\nV}^{p+,u,q-}$ and $\det D_{\nV}^{p+,u,q-}$ is canonically orientated, we obtain a canonical orientation of $\det D^{p+,u,q-}$.

\s\n {\em Step 3.}
It remains to show that $\mathfrak{o}(D^u)$ is independent of the choices.  We refer the reader to \cite[Section 8.1]{FO3} for a proof.
\end{proof}

Since $\det D^u$ is canonically isomorphic to $\det \mathbb D^u$, where $\mathbb D^u$ is the linearized operator of $\overline\partial_J: \V(p,q;A) \to \E(p,q;A)$, a choice of auxiliary orientation data induces a system of orientations on
$$(\Lambda^{\op{top}} \E_I)^*\otimes \Lambda^{\op{top}}T \V_I.$$

Next we study orientations under the group action. To do that, we first need to allow $G$ to act on the obstruction bundle.

\subsection{Orientations on $e^{r,\ell}$} \label{subsection: orientation of fiber}

\begin{lemma} \label{lemma: orientations on e}
If $\ell$ is an even multiple of $n$, then $e^{r,\ell}$ admits a canonical $G$-invariant orientation.
\end{lemma}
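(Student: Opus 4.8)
The plan is to build the orientation of $e^{r,\ell}$ out of the orientation data already fixed for the Cauchy--Riemann operators and then check that the $G$-action respects it once $\ell$ is an even multiple of $n$. Recall that $e^{r,\ell}=\R\langle f_{-1}^r,\dots,f_{-\ell}^r\rangle$, where the $f_{-j}^r$ are the negative eigenfunctions of the asymptotic operator $A_r=-J(r)\tfrac{\partial}{\partial t}$ on $\mathcal W_r=\{\xi\in C^\infty([0,1],T_rM)\mid \xi(i)\in T_rL_i\}$. Under the identification of $(T_rM,J(r))$ with $(\R^n\oplus i\R^n,i)$ sending $T_rL_0\to\R^n$ and $T_rL_1\to i\R^n$ (which we arranged earlier, and which can be done $G$-equivariantly up to the isotropy action), the operator $A_r$ diagonalizes: $\mathcal W_r\cong\bigoplus_{m=1}^n\mathcal W_r^{(m)}$ where each $\mathcal W_r^{(m)}$ is a copy of the standard scalar problem $\xi'\mapsto -i\xi'$ on $[0,1]$ with $\xi(0)\in\R$, $\xi(1)\in i\R$. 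The negative eigenvalues of the scalar problem are $\lambda_{-k}=-\tfrac{\pi}{2}(2k-1)$, $k\ge 1$, each simple, with one-dimensional eigenspaces; so $e^{r,\ell}$ with $\ell=2nN$ decomposes $G_r$-equivariantly (for the isotropy $G_r$ of $r$) as $\bigoplus_{m=1}^n E_m$, where $E_m$ is the span of the first $2N$ negative eigenfunctions in the $m$th scalar block. I would orient each $E_m$ by the ordered basis $(f_{-1},f_{-2},\dots,f_{-2N})$ of that block — a real, $2N$-dimensional space — and then orient $e^{r,\ell}$ by the direct-sum ordering over $m=1,\dots,n$.

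The key steps in order are: (1) make the diagonalization of $A_r$ precise and identify the eigenfunctions and eigenvalue multiplicities of the scalar model, so that ``the first $\ell$ negative eigenfunctions'' is a canonically ordered list once we fix the block decomposition; (2) observe that any element $g$ of the stabilizer $G_r$ of $r$ acts $\C$-linearly on $(T_rM,J(r))$ preserving $T_rL_0$ and $T_rL_1$ (hence, under the model identification, acts by some $U\in U(n)$ that preserves both $\R^n$ and $i\R^n$, i.e.\ $U\in O(n)\subset U(n)$), and commutes with $A_r$; therefore $g$ preserves each eigenspace $\ker(A_r-\lambda_{-k})$ and acts on the $k$th aggregate eigenspace $\bigoplus_m(\text{$k$th eigenvector of block }m)\cong\R^n$ exactly by that orthogonal matrix $U$; (3) conclude that $g$ acts on $e^{r,\ell}=\R^{n\cdot 2N}$ (reordered as $N$ consecutive copies of the $k$-aggregate blocks, or equivalently as $(\R^n)^{2N}$ up to reordering by even permutations) by $U^{\oplus 2N}$, whose determinant is $(\det U)^{2N}$; since $2N$ is even this is $+1$ regardless of whether $\det U=\pm 1$, so $g$ preserves the orientation; (4) check this orientation is independent of the remaining choice (the $G$-equivariant-up-to-isotropy identification $(T_rM,J(r))\cong(\R^n\oplus i\R^n,i)$), since any two such differ by an element of $O(n)$ acting blockwise, which by the same even-power argument preserves the orientation; and (5) note $G$ permutes the points $r$ of $L_0\cap L_1$, carrying $e^{r,\ell}$ to $e^{g(r),\ell}$ by the induced isometry, and the construction is manifestly natural under this, so the collection $\{e^{r,\ell}\}_r$ carries a $G$-invariant orientation in the equivariant sense.

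The main obstacle is step (2)–(3): the point is that $g\in G_r$ need \emph{not} act on $T_rM$ by a unitary \emph{complex} matrix with trivial real determinant — it lands in $O(n)\subset U(n)$, and an element of $O(n)$ can have determinant $-1$, so a naive attempt to orient a single $\R^n$-eigenspace fails. The trick that makes the lemma true is precisely that $e^{r,\ell}$ packages an \emph{even} number ($\ell/n=2N$) of copies of this $\R^n$, so the sign $(\det U)^{2N}=1$ is killed; this is where the hypothesis ``$\ell$ an even multiple of $n$'' is used, and it is the only place. A secondary, more bookkeeping-heavy point is to be careful that ``the first $\ell$ eigenfunctions'' really does split as $2N$ copies of $\R^n$ and not into some other grouping — this follows because the negative spectrum of the scalar model is $\{-\tfrac{\pi}{2}(2k-1)\}_{k\ge1}$ with each value simple, so the first $2N$ negative eigenvalues of $A_r$ (counted with the $n$-fold multiplicity coming from the $n$ identical blocks) are exactly $\lambda_{-1}<\dots<\lambda_{-2N}$ each with multiplicity $n$, and $g$ acts within each multiplicity-$n$ eigenspace by the same $U$. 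The rest is routine naturality checking, which I would not spell out in detail.
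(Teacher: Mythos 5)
Your proposal is correct and follows essentially the same route as the paper: identify $(T_rM,J(r))$ with $(\R^n\oplus i\R^n,i)$, observe that the negative eigenspaces of $A_r$ come in $n$-dimensional blocks on which the (isotropy) action of $G$ is the same orthogonal representation $\rho:G\to O(n)$, and use that the number $\ell/n$ of such blocks is even so the sign $(\det\rho(g))^{\ell/n}$ is $+1$. Your treatment is in fact slightly more careful than the paper's on two minor points — indexing the genuinely \emph{negative} eigenvalues $-\tfrac{\pi}{2}(2k-1)$ rather than the positively-indexed family written in the paper, and explicitly checking naturality when $g$ moves $r$ to another intersection point — but the substance is identical.
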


\begin{proof}
Without loss of generality, we assume:
\begin{itemize}
\item[(i)] $T_rM\simeq \R^n \oplus i\R^n=\C^n$, where $T_r L_0$ is the $\R^n$ factor and $T_r L_1$ is the $i\R^n$ factor;
\item[(ii)] $J(r)=J_0$ is the standard complex structure that takes $v\in \R^n$ to $iv\in i\R^n$ and $g_r$ is the standard Euclidean structure on $T_rM$; and
\item[(iii)] $G$ leaves $T_r L_0$ invariant.
Since $G$ is compatible with $J$ and $g$, it can be described by a representation $\rho: G\to O(\R^n)$.
\end{itemize}
The asymptotic operator $A$ is given by $-J_0 {\frac{\bdry}{\bdry t}}$ with boundary conditions $\R^n$ at $t=0$ and $i\R^n$ at $t=1$. For each $k=0,1,\dots$, there are $n$ eigenfunctions
$$\tilde e^k_j: [0,1]\to \C^n, \quad t\mapsto e^{i(\pi k + \pi/2)t }e_j,\quad j=1,\dots, n,$$
where $e_1,\dots, e_n$ is a basis for $\R^n$.  Writing $\ell= 2k_0n$, we choose the orientation
\begin{equation}\label{eqn: or}
\tilde e^0_1\wedge \dots \wedge\tilde e^0_n \wedge \dots \wedge \tilde e^{2k_0-1}_1\wedge\dots\wedge \tilde e^{2k_0-1}_n
\end{equation}
for $e^{r,\ell}$. Since $G$ acts on each $\R\langle \tilde e^k_1,\dots,\tilde e^k_n\rangle$ in the same way as on $\R^n$ using the identification $\tilde e^k_j\mapsto e_j$, for any $g\in G$,
$$g(\tilde e^0_1)\wedge \dots \wedge g(\tilde e^0_n) \wedge \dots \wedge g(\tilde e^{2k_0-1}_1)\wedge\dots\wedge g(\tilde e^{2k_0-1}_n)=\tilde e^0_1\wedge \dots \wedge\tilde e^0_n \wedge \dots \wedge \tilde e^{2k_0-1}_1\wedge\dots\wedge \tilde e^{2k_0-1}_n$$
and $G$ preserves the orientation.  Note that the definition in Equation~\eqref{eqn: or} does not depend on the orientation of $\R^n$.
\end{proof}

{\em From now on let us assume that $\ell$ is an even multiple of $n$ and hence all the $e^{r,\ell}$ are canonically oriented,} so $G$ acts on $\mathbb E_I \to \mathbb V_I$.

\subsection{Orientations under group action} \label{relative spin under G}

Now we study the action of $G$ on the orientation of $(\Lambda^{\op{top}} \E_I)^*\otimes \Lambda^{\op{top}}T \V_I$.

\cbu
{\em We assume Condition \ref{condition: O} from Section~\ref{section: introduction}, i.e., that the relative spin structure is preserved under $G$}, whose definition we give presently.

Let $(\tau, V, \mathfrak t_0, \mathfrak t_1)$ be a relative spin structure for $(L_0, L_1)$. Let $\tau$ be a $G$-equivariant triangulation of $M$; such a triangulation exists by the equivariant triangulation theorem. Then $(\tau, V, \mathfrak t_0, \mathfrak t_1)$ is {\em preserved by $G$}, if for any $g\in G$, there exists an orientation-preserving bundle isomorphism $\theta_g: V \stackrel\sim\longrightarrow V$ such that
\begin{itemize}
\item $\pi_V\circ \theta_g = g \circ \pi_V$, where $\pi_V: V\to M^{(3)}$ is the projection to the base, and
\item for each $i=0,1$, the trivialization $$\mathfrak t_i:TL_i |_{L_i^{(2)}} \oplus \nV|_{L_i^{(2)}} \to L_i^{(2)} \times \R^{d}$$ is homotopic to
$$g_\sharp \mathfrak t_i := \mathfrak t_i \circ (g_* \oplus \theta_g |_{L_i^{(2)}})^{-1},$$ 
where $d = n + \op{rank} V$  and 
$$g_* \oplus \theta_g|_{L_i^{(2)}}: TL_i |_{L_i^{(2)}} \oplus \nV|_{L_i^{(2)}} \to TL_i |_{L_i^{(2)}} \oplus \nV|_{L_i^{(2)}}.$$
\end{itemize}
\cb

%

For $p\in L_0 \cap L_1$ and $g\in G$, let $s = g p$. At $s$, we have the canonical isomorphism
\begin{equation} \label{eqn: direct sum at s}
\det D^{s+} \otimes \det D^{s-}\otimes \det D_{\nV}^{s+,s-} \simeq  \det (D^{s+, s-} \oplus D_{\nV}^{s+,s-})
\end{equation}
coming from gluing. Let $\mathfrak o(D^{p+})$ and $\mathfrak o(D^{s-})$ be the capping orientations of $\det D^{p+}$ and $\det D^{s-}$ and let $\mathfrak o( D^{s+,s-}_{\nV})$ be the canonical orientation of $D^{s+,s-}_{\nV}$. Then $g_{\#} \mathfrak o(D^{p+}) \otimes \mathfrak o(D^{s-}) \otimes \mathfrak o( D^{s+,s-}_{\nV})$ determines an orientation of the left-hand side of Equation ~\eqref{eqn: direct sum at s}.
On the right-hand side of Equation~\eqref{eqn: direct sum at s}, we have a canonical orientation of $\det (D^{s+, s-} \oplus D_{\nV}^{s+,s-})$ coming from the concatenation of the stable capping trivializations $g_{\#}\widetilde{\mathfrak t}_p$ and $\widetilde{\mathfrak t}_s$. (The trivializations $g_{\#}\widetilde{\mathfrak t}_p$ and $\widetilde{\mathfrak t}_s$ a priori do not agree at the endpoints.  We assume that $g_{\#}{\mathfrak t}_i$ has been homotoped to ${\mathfrak t}_i$ and by abuse of notation we refer to $g_{\#}\widetilde{\mathfrak t}_p$ as the result of applying the homotopy to  $g_{\#}\widetilde{\mathfrak t}_p$.)  We compare these two orientations via the isomorphism of Equation~\eqref{eqn: direct sum at s}, and define $\sigma(p,g) \in \{\pm 1\}$ to be the difference.  For $u$ with $[u]\in \V(p,q,A)$, let $\mathfrak o(D^u)$ be the orientation of $u$ determined by the auxiliary orientation data \hyperlink{O1}{(O1)}--\hyperlink{O4}{(O4)}.  Then one can check that  $g_{\#} \mathfrak o(D^u) = \sigma(p,g) \sigma(q,g) \mathfrak o (D^{gu})$.

In general, $g\in G$ may not preserve the orientation, but we can define the action of $g\in G$ on $CF^{\bullet}(L_0, L_1)$ by sending $[p]$ to $\sigma(p,g)[gp]$.
In the case when the moduli spaces that we count to define the differential $d$ of $CF^{\bullet}(L_0, L_1)$ are $G$-invariant, it is obvious that the $G$-action on $CF^{\bullet}(L_0, L_1)$ commutes with $d$.
In Section \ref{section: equivariance}, we see this is true even when the moduli space is not $G$-invariant.

{\em From now on, we fix a choice of auxiliary orientation data \hyperlink{O1}{(O1)}-\hyperlink{O4}{(O4)} such that the relative spin structure (O1) is preserved under the $G$-action.} This gives an orientation of $(\Lambda^{\op{top}} \E_I)^*\otimes \Lambda^{\op{top}}T \V_I$.
Since the fiber of $\E_I\to \V_I$ is canonically oriented by Section~\ref{subsection: orientation of fiber}, we also get an orientation of $\V_I$.

\section{Equivariance of curve counting} \label{section: equivariance}

\subsection{Equivariance of curve counting} \label{subsection: curve counting} $\mbox{}$


\s\n
{\em Choice of $\mathfrak{S}$.}
We first describe how to choose $\mathfrak{S}=\{\mathfrak{s}_I\}_I$ to be {\em as $G$-equivariant as possible}.  First decompose $L_0\cap L_1$ into a disjoint union of $G$-orbits $\mathcal{O}_p$, $p\in L_0\cap L_1$.  Given $\mathcal{O}_p$, pick a generic $s_p\in e^{p,\ell}$ which is sufficiently close to the origin and for each $q\in \mathcal{O}_p$ choose a single $g\in G$ such that $g(p)=q$ and set $s_q= g(s_p)\in e^{q,\ell}$.  We then choose $\mathfrak{s}_i=s_p\in e^{p,\ell}$, where $p=\mathbf{t} (\mathcal{M}_i)$, and construct $\mathfrak{s}_I$ as described in Sections~\ref{charts} and \ref{charts'}. We additionally assume that:
\be  \hypertarget{star}{}
\item[(*)] $|s_i| \ll |s_j|$ if $i > j$.
\ee

\begin{rmk}
Note that $\mathfrak{S}$ is not expected to equal $g(\mathfrak{S})$ for all $g\in G$.  If we replace $\mathfrak{S}$ by a $G$-equivariant collection of {\em multisections}, the Floer chain groups will be defined over $\Q$ as in Cho-Hong~\cite{CH}. Since this leads to some loss of information, we choose to work with collections of sections.
\end{rmk}

The following key theorem makes the equivariant count work.

\begin{thm} \label{thm: equivariant count}
If $\op{vdim}\mathcal{M}_i=0$ and $g(\mathcal{M}_i)=\mathcal{M}_i$, then $\mathcal{Z}(\Kur(\mathcal{M}_i),{\mathfrak S})$ and $\mathcal{Z}(\Kur(\mathcal{M}_i),g({\mathfrak S}))$ are cobordant.
\end{thm}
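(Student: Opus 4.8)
The plan is to build an explicit cobordism inside a one-parameter family of Kuranishi structures interpolating between $\mathfrak{S}$ and $g(\mathfrak{S})$, exploiting the fact that $\Kur$ itself is $G$-invariant even though the section is not. First I would observe that $g$ induces an isomorphism of the full semi-global Kuranishi structure $\Kur$ with itself covering the permutation $I\mapsto g(I)$ of index tuples; since $g(\mathcal{M}_i)=\mathcal{M}_i$ we have $c(g(I))=i$ whenever $c(I)=i$, so $g$ restricts to an automorphism of the sub-structure $\Kur(\mathcal{M}_i)$. Under this automorphism $\overline\partial_I$, $\psi_I$, the neck length functions $\mathfrak{nl}$, and the gluing maps $\mathfrak{G}_I$ are all carried to their counterparts, so the \emph{only} thing that changes is the section: $g(\mathfrak{S})=\{g\circ\mathfrak{s}_{g^{-1}(I)}\circ g^{-1}\}_I$. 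Thus $\mathcal{Z}(\Kur(\mathcal{M}_i),g(\mathfrak{S}))$ is literally the zero set of a second section of the \emph{same} Kuranishi structure.

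Next I would connect the two sections by a path. Recall from the "Choice of $\mathfrak{S}$" discussion that each $\mathfrak{s}_j$ is, in the canonical trivialization $\E_j\cong e^{q,\ell}$ with $q=\mathbf{t}(\mathcal{M}_j)$, a constant generic vector $s_q\in e^{q,\ell}$ close to the origin, and that the boundary sections $\mathfrak{s}_I$ are assembled from the $s_{r_j}$ via the cutoff functions $\zeta(\mathfrak{nl}_{i_j})$ as in Equation~\eqref{eqn: defn of s I}. The section $g(\mathfrak{S})$ has the same shape but with $s_q$ replaced by $g(s_{g^{-1}(q)})$; crucially both live in the \emph{same} canonically $G$-oriented fiber $e^{q,\ell}$ (Lemma~\ref{lemma: orientations on e}), since $\ell$ is an even multiple of $n$. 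For $\tau\in[0,1]$ set $s_q^\tau := (1-\tau)s_q + \tau\, g(s_{g^{-1}(q)})$ for each $q$ appearing in $\mathcal{S}(p_i,q_i)$, and let $\mathfrak{S}^\tau=\{\mathfrak{s}_I^\tau\}$ be the section built from the $s_q^\tau$ by the same formula~\eqref{eqn: defn of s I}. One checks directly from the formula that each $\mathfrak{S}^\tau$ is a genuine section of $\Kur(\mathcal{M}_i)$ in the sense of the definition (the compatibility conditions (1)--(3) are linear in the $s_q$, so they pass to convex combinations), with $\mathfrak{S}^0=\mathfrak{S}$ and $\mathfrak{S}^1=g(\mathfrak{S})$.

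The cobordism is then $W:=\bigcup_\tau \{\tau\}\times \mathcal{Z}(\Kur(\mathcal{M}_i),\mathfrak{S}^\tau)$, or more precisely the zero set in $[0,1]\times\V_I$ (glued over $I$ by the morphisms) of the section $(\tau,x)\mapsto \overline\partial_I(x)-\mathfrak{s}_I^\tau(x)$. Since $\op{vdim}\mathcal{M}_i=0$, for generic choices this parametrized section is transverse, so $W$ is a compact $1$-manifold with $\partial W=\mathcal{Z}(\Kur(\mathcal{M}_i),\mathfrak{S})\sqcup \mathcal{Z}(\Kur(\mathcal{M}_i),g(\mathfrak{S}))$; here the Hausdorff/manifold property of each slice and of the total space follows from the presence of the neck length functions exactly as in the cited Lemma and \cite[Lemma 8.8.1]{BH2}. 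It remains to keep the whole family $\mathfrak{S}^\tau$ "as generic as possible": one must perturb the straight-line homotopy slightly (within the same affine spaces $e^{q,\ell}$, respecting the hierarchy $|s_i|\ll|s_j|$ for $i>j$) to achieve transversality of the parametrized problem over all the $\V_I$ simultaneously, which is a standard Sard--Smale argument since everything is finite-dimensional. The main obstacle I anticipate is bookkeeping at the corners of $\V_I$: one has to check that the boundary strata contributions (curves close to breaking, where $\zeta(\mathfrak{nl})$ turns perturbations on and off) cancel in pairs along $W$ rather than producing spurious boundary, which is precisely where the consistency conditions (1)--(2) of a section and the restriction--inclusion morphisms $\phi_{I',I}$ are used; this is the genuinely "semi-global" part of the argument and parallels the proof that $d^2=0$, but with the extra $\tau$-direction carried along.
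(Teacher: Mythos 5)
Your overall framework---regard $g(\mathfrak{S})$ as a second section of the same $G$-invariant Kuranishi structure, join it to $\mathfrak{S}$ by a path of sections, and take the parametrized zero set as the cobordism---is the right starting point and matches the paper's strategy in spirit. But the proposal has a genuine gap exactly at the point you defer to ``bookkeeping at the corners.'' The parametrized zero set $W$ is a $1$-manifold whose boundary is \emph{not} a priori just the two slices at $\tau=0,1$: since $\op{vdim}\mathcal{M}_{i_1}+\op{vdim}\mathcal{M}_{i_2}=-1$ for a once-broken configuration, at isolated interior times $\tau$ the moving point $s_{r_2}^\tau$ crosses the codimension-one image of $\overline\bdry_{i_2}:\V_{i_2}\to e^{r_2,\ell}$ (when $\op{vdim}\mathcal{M}_{i_2}=-1$), and each such crossing is a birth or death of a zero at the broken boundary, i.e.\ a spurious end of $W$. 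These ends do \emph{not} cancel because of the consistency conditions on sections or the restriction--inclusion morphisms; those only guarantee that the strata fit together. What is actually needed is that the signed intersection number $\langle\gamma_{r_2},\overline\bdry_{i_2}\rangle$ of the chosen path from $s_{r_2}$ to $g(s_{r_2})$ with $\op{Im}\overline\bdry_{i_2}$ vanishes. This is false for a path between two arbitrary generic points near the origin (the locally constant ``weight function'' $w$ on the complement of $\op{Im}\overline\bdry_{i_2}$ genuinely jumps between components); it holds here only because the endpoint is $g(s_{r_2})$ and $\overline\bdry_{i_2}$ is $G$-equivariant, so that $w\circ g$ is again a weight function, and finiteness of $G$ forces $w\circ g=w$ (if $w\circ g=w+k$ then iterating gives $w=w+|G|k$). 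This is Lemma~\ref{lemma: zero signed intersection}, the actual heart of the theorem, and it is absent from your argument.

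Two further points. First, the straight-line homotopy $s_q^\tau=(1-\tau)s_q+\tau\,g(s_{g^{-1}(q)})$ applied uniformly is not what the paper does and is not obviously adequate: the paper instead modifies $\mathfrak{S}$ only in the region $\{\mathfrak{nl}\geq\mathcal{L}'\}$ near the boundary (via $\gamma^*_{r_j}=\gamma_{r_j}\circ\tau(\mathfrak{nl})$), proves the modification changes no counts using the intersection-number lemma and its corner variant (Lemma~\ref{lemma: zero signed intersection 2}), and only then compares the two sections on the interior, where compactness makes the comparison easy. Second, the hierarchy $|s_i|\ll|s_j|$ for $i>j$, which you mention only in passing, is used substantively in the codimension-two (corner) analysis to guarantee that the modified sections miss $\overline\bdry_{(i_1,i_2,i_3)}$ on the transition regions; a generic perturbation ``respecting the hierarchy'' does not by itself dispose of the corner strata. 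As written, your proof establishes the easy case $\bdry\mathcal{M}_i=\varnothing$ and the formal setup, but the equivariance input that makes the boundary escapes cancel---the degree/winding-number argument relying on $|G|<\infty$---is missing.
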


\begin{proof}
If $\overline{\mathcal{M}}_i=\mathcal{M}_i$, i.e., there is no boundary, then $\mathcal{Z}(\Kur(\mathcal{M}_i),{\mathfrak S})$ is given by the preimage of $s_{r}$, $r=\mathbf{t}(\mathcal{M}_i)$, under the map
$$\overline\bdry_i:\V_i\to e^{r,\ell}.$$
Similarly, $\mathcal{Z}(\Kur(\mathcal{M}_i),g({\mathfrak S}))$ is given by the preimage of $g(s_{r})$. Since $\overline\bdry_i (\bdry \V_i)$ does not contain $0$, and $\mathfrak S$ and $g(\mathfrak S)$ are sufficiently close to $0$, the two preimages are cobordant.

The main point of the proof is to homotop ${\mathfrak S}$ to $g({\mathfrak S})$ near $\bdry \mathcal{M}_i$ (i.e., for curves in $\V_i$ that are close to breaking) when it is nonempty. In order to simplify the cumbersome notation, let us assume without loss of generality that:
\be
\item[(**)] $\mathcal{M}_{i_j}=g(\mathcal{M}_{i_j})$ for all $i_j$ that appears in $I=(i_1,\dots,i_m)$, $m\geq 2$, such that $c(I)=(i)$ and for all $g\in G$; in particular $\mathcal{M}_i$ is $G$-invariant.
\ee

\s\n
{\em Step 1.}
Given $I=(i_1,\dots,i_m)$  such that $c(I)=(i)$, consider the composition
$$\V_{i_1}\times\dots \times \V_{i_m}\times (R,\infty)^{m-1}\xrightarrow{\mathfrak{S}_{(i_1,\dots,i_m)}} \V_{I}\stackrel{\overline\bdry_I}\longrightarrow e^{r_1,\ell}\oplus \dots\oplus e^{r_m,\ell},$$
where $r_j=\mathbf{t}(\mathcal{M}_{i_j})$ and in particular $r_m = r$.
As $R\to\infty$, its image approaches the image of the product map
$$(\overline\bdry_{i_1},\dots,\overline\bdry_{i_m}):\V_{i_1}\times \dots \times \V_{i_m}\to e^{r_1,\ell}\oplus \dots\oplus e^{r_m,\ell}.$$
This implies that $\op{Im}(\overline\bdry_I\circ \mathfrak{S}_{(i_1,\dots,i_m)})$ is effectively $\op{Im}(\overline\bdry_{i_1},\dots,\overline\bdry_{i_m})$.
We assume that the generic point $(s_{r_1},\dots,s_{r_m})\in e^{r_1,\ell}\oplus \dots\oplus e^{r_m,\ell}$ has been chosen to avoid $\op{Im}(\overline\bdry_{i_1},\dots,\overline\bdry_{i_m})$.  Note that under our assumption $\op{vdim}\mathcal{M}_i=0$, we have
\begin{equation}\label{eqn: sum of dimensions}
(m-1)+\sum_{j=1}^m \dim \V_{i_j}=\dim \V_I=\sum_{j=1}^m \dim e^{r_j,\ell}.
\end{equation}
\begin{rmk}
We will see that $\mathcal{Z}(\Kur(\mathcal{M}_i),{\mathfrak S})$ and $\mathcal{Z}(\Kur(\mathcal{M}_i),g({\mathfrak S}))$ are empty sets ``near the boundary" unless $m=2$ and  $(\op{vdim} \mathcal{M}_{i_1},\op{vdim}\mathcal{M}_{i_2})=(-1,0)$ or $(0,-1)$.
\end{rmk}
\cb
We now continue the proof in steps based on the value of $m$.

\s\n
{\em Step 2.}  Suppose that $m=2$.

\s\n
{\em Step 2A.} Suppose that $(\op{vdim} \mathcal{M}_{i_1},\op{vdim}\mathcal{M}_{i_2})=(0,-1)$ or $(-1,0)$.  We treat the former; the latter is analogous. Consider the $G$-equi\-variant, codimension one map
$$\overline\bdry_{i_2}: \V_{i_2}\to e^{r_2,\ell}.$$
Let $S_{\rho_2}^{r_2,\ell-1}\subset e^{r_2,\ell}$ (resp.\ $B_{\rho_2}^{r_2,\ell}\subset e^{r_2,\ell}$) be a sphere (resp.\ an open ball) of radius $0<\rho_2 \ll \op{dist}(\{0\},\overline\bdry_{i_2}(\op{\bdry}\V_{i_2}))$.  The action $G\to \op{GL}(e^{r_2,\ell})$ factors through the orthogonal group and hence $G$ acts on $S_{\rho_2}^{r_2,\ell-1}$.

\begin{lemma} \label{lemma: zero signed intersection}
If $s_{r_2}\in e^{r_2,\ell}$ is a point such that $0< |s_{r_2}|<\rho_2$ and $s_{r_2}\not\in \op{Im}(\overline\bdry_{i_2})$, then for any path $\gamma_{r_2}:[0,1]\to B_{\rho_2}^{r_2,\ell}$ from $s_{r_2}$ to $g(s_{r_2})$, the signed intersection number $\langle \gamma_{r_2},\overline\bdry_{i_2}\rangle$ between $\gamma_{r_2}$ and $\overline\bdry_{i_2}$ is zero.
\end{lemma}

\begin{proof}[Proof of Lemma~\ref{lemma: zero signed intersection}]
We may slightly perturb $\rho_2$ such that $S_{\rho_2}^{r_2,\ell-1}\pitchfork \overline\bdry_{i_2}$.  Then $N:= \overline\bdry_{i_2}^{-1}(S_{\rho_2}^{r_2,\ell-1})$ is a submanifold of $\V_{i_2}$ of dimension $(\ell - 2)$.  We homotop $\gamma_{r_2}$ to a concatenation $\gamma_1\gamma_2\gamma_3$, where
\be
\item $\gamma_1$ is a slightly perturbed radial ray from $s_{r_2}$ to a point $x_1\in C:=S_{\rho_2}^{r_2,\ell-1}-\overline\bdry_{i_2} N$;
\item $\gamma_2$ connects $x_1$ to $g(x_1)$ on $S_{\rho_2}^{r_2,\ell-1}$; and
\item $\gamma_3=(g(\gamma_1))^{-1}$ from $g(x_1)$ to $g(s_{r_2})$.
\ee
See Figure~\ref{fig: concat}. The contributions to $\gamma_{r_2}\cap \overline\bdry_{i_2}$ from $\gamma_1$ and $\gamma_3$ cancel, and it remains to calculate the contribution from $\gamma_2$.

There exists a locally constant weight function $w: C\to \Z,$
such that the values on adjacent connected components differ by $1$;
more precisely, given any two points $x, x' \in C$, if $\delta$ is a path from $x$ to $x'$ in $S_{\rho_2}^{r_2,\ell-1}$ and $\delta$ intersects $\overline\bdry_{i_2}|_N$ positively and only once, then $w(x)-w(x')=1$.  The existence of such a function follows from the existence of the winding number of the map
$$\overline\bdry_{i_2}|_N: N\to S_{\rho_2}^{r_2,\ell-1}-\{z\}\simeq \R^{\ell-1},$$
for any $z\in C$.
More precisely, for any $x \in \R^{\ell - 1} \backslash \overline\bdry_{i_2}(N)$,
$w(x)$ is given by the degree of the mapping from $N$ to $ \R^{\ell - 1}\backslash \{x\} \cong S^{\ell-2}.$
Any two weight functions differ by an integer-valued constant function (depending on the choice of $z$).

Next we claim that $w=w\circ g$ for any $g\in G$. First observe that $w\circ g$ is also a weight function. Arguing by contradiction, suppose there is a component $C_0$ of $C$ such that $w(g(C_0))=w(C_0)+k$, $k\not=0$.  Then $w\circ g =w+k$, and $w(g^2(C_0))=(w \circ g)(C_0)+k = w(C_0) + 2k$.  Applying this procedure to the order $m$ of the group $G$, $w(C_0)=w(g^m(C_0))=w(C_0)+mk$, which is a contradiction.

Since
$\langle \gamma_2, \overline\bdry_{i_2}\rangle= \langle \gamma_2,\overline\bdry_{i_2}|_N\rangle^\circ,$
where $\langle \cdot,\cdot\rangle^\circ$ is the intersection number on $S_{\rho_2}^{r_2,\ell-1}$, and
$$\langle \gamma_2,\overline\bdry_{i_2}|_N\rangle^\circ= w(x_1)-w(g(x_1))=0,$$
the lemma follows.
\end{proof}

\begin{figure}
\begin{tikzpicture}[>=stealth]
\filldraw (3,3) circle [radius = 0.02];
\node [right] at (3,3) {$O$};
\filldraw[red] (2.5, 3.3) circle [radius = 0.05];
\node[above, red] at (2.5, 3.3) {$s_{r_2}$};

\filldraw[red] (3, 2.4) circle [radius = 0.05];
\node[right, red] at (3, 2.4) {$gs_{r_2}$};

\draw[->, brown]  (3,5) arc [radius=2, start angle = 90, end angle = 450];

\draw[->, thick] (2, 5) .. controls (2.5, 4.5) and (2,3) .. (0.8, 2);
\begin{scope}[xshift = 200 pt, yshift = 55 pt]
\draw [->, thick, rotate = 120] (2, 5) .. controls (2.5, 4.5) and (2,3) .. (0.8, 2);
\end{scope}

\begin{scope}[xshift = 55 pt, yshift = 200 pt]
\draw [->, thick, rotate = 240] (2, 5) .. controls (2.5, 4.5) and (2,3) .. (0.8, 2);
\end{scope}

\draw [->, blue, thick] (2.5, 3.3) -- (1.2, 3.9);
\draw [->, blue, thick] (1.2, 3.9) arc [radius = 2, start angle = 153, end angle = 277];
\draw [->, blue, thick] (3.2,1) -- (3, 2.4);
\node[right, brown] at (4.5,4.5) {$S_{\rho_2}^{r_2, \ell -1}$};
\node[right] at (5, 3) {$\op{Im}\overline \partial _{i_2}$};
\node[above, blue] at (1.5, 3.2) {$\gamma_1$};
\node[above, blue] at (1.22, 1.3) {$\gamma_2$};
\node[right, blue] at (3.07, 1.5) {$\gamma_3$};
\end{tikzpicture}
\caption{}
\label{fig: concat}
\end{figure}

We now explain how to homotop the section $\mathfrak S$ ``near the boundary of" $\V_i$ to another section $\mathfrak S'$ such that:
\be
\item[(M1)] $\mathfrak{S}'$ and $g(\mathfrak{S})$ agree ``near the boundary''; and
\item[(M2)]  $\mathfrak{S}$ and $\mathfrak{S}'$ have the same signed count of intersections with $\overline \partial$.
\ee

In the $m=2$ case, this means that we homotop $\mathfrak{s}_{(i_1,i_2)}$ to another section $\mathfrak{s}'_{(i_1,i_2)}$ such that:
\be
\item $\mathfrak{s}'_{(i_1,i_2)}$ and $g(\mathfrak{s}_{(i_1,i_2)})$ agree ``near the boundary''; and
\item  $\mathfrak{s}_{(i_1,i_2)}$ and $\mathfrak{s}'_{(i_1,i_2)}$ have the same signed count of intersections with $\overline \partial _{(i_1,i_2)}$.
\ee
Pick $\mathcal{L}'\gg \mathcal{L}$ and let $\tau:[\mathcal{L}',\infty)\to [0,1]$ be a smooth function such that
\begin{itemize}
\item $\tau([\mathcal{L}',\mathcal{L}'+\varepsilon''])=0$,
\item $\tau([\mathcal{L}'+2\varepsilon'',\infty))=1$, and
\item its restriction to $(\mathcal{L}'+\varepsilon'', \mathcal{L}'+2\varepsilon'')$ is a diffeomorphism onto $(0,1)$.
\end{itemize}
Let $\gamma^*_{r_j}=\gamma_{r_j}\circ \tau$,
where we take $\gamma_{r_1}$ to be an arbitrary path in $e^{r_1, \ell}$ connecting $s_{r_1}$ to $g(s_{r_1})$ and $\gamma_{r_2}$ to be as in Lemma~\ref{lemma: zero signed intersection}.  We then define
\begin{gather*}
\mathfrak{s}'_{(i_1,i_2)}=\left\{
\begin{array}{cl}
(\gamma^*_{r_1}(\mathfrak{nl}_{i_1}),\gamma^*_{r_2}(\mathfrak{nl}_{i_1})), & \mbox{ for } \mathfrak{nl}_{i_1}\geq \mathcal{L}',\\
\mathfrak{s}_{(i_1,i_2)}, & \mbox{ for } \mathfrak{nl}_{i_1}\leq \mathcal{L}'.
\end{array}
\right.
\end{gather*}

\begin{figure}
\begin{tikzpicture}
\draw[->] (-2.5,0) -- (7,0);
\draw[thick, brown](-2.5, 0) -- (4,0);
\draw[thick, brown](6,1) -- (7,1);
\draw[thick, brown] (4,0) .. controls (4.5,0) and (5.5,1) .. (6,1);
\node[below] at (-2,0) {$\mathcal L$};
\node[below] at (2,0) {$\mathcal L'$};
\node[below] at (4,0) {$\mathcal L' + \varepsilon$};
\node[left] at (5,0.5) {$\tau$};
\node[below] at (6,0) {$\mathcal L' + 2\varepsilon$};
\filldraw (-2,0) circle [radius = 0.02];
\filldraw (2,0) circle [radius = 0.02];
\filldraw (4,0) circle [radius = 0.02];
\filldraw (6,0) circle [radius = 0.02];

\end{tikzpicture}
\caption{}
\end{figure}
By Lemma~\ref{lemma: zero signed intersection}, $\mathfrak{S}$ and $\mathfrak{S}'$ have the same signed count of intersections with $\overline \partial$ near $\V_{(i_1,i_2)}$.

\s\n
{\em Step 2B.} Suppose that $(\op{vdim} \mathcal{M}_{i_1},\op{vdim}\mathcal{M}_{i_2})=(a, -a-1)$ or $(-a-1,a)$ with $a>0$; we treat the former. By Equation~\eqref{eqn: sum of dimensions}, a generic path $\gamma_{r_2}$ from $s_{r_2}$ to $g(s_{r_2})$ does not intersect $\overline\bdry_{i_2}$ and the same construction of $\mathfrak{S}'$ applies.  This covers the homotopy of $\mathfrak{S}$ near $\V_I$ for $m=2$.

\s\n
{\em Step 3.} Suppose $m=3$.  Let $r_j={\bf t}(\mathcal{M}_{i_j})$, $j=1,2,3$.

\s\n
{\em Step 3A.}
Suppose that $\op{vdim} \mathcal{M}_{c(i_2,i_3)}\leq -1$.

\s
Now we have the following variant of Lemma~\ref{lemma: zero signed intersection}:

\begin{lemma} \label{lemma: zero signed intersection 2}
There exists $\rho_3>0$ small such that if $s_{r_3}\in e^{r_3,\ell}$ is a point such that $0< |s_{r_3}|<\rho_3$ and $s_{r_3}\not\in \op{Im}(\overline\bdry_{c(i_2,i_3)})$, then there exists a path $\gamma_{r_3}:[0,1]\to B_{\rho_3}^{r_3,\ell}$ from $s_{r_3}$ to $g(s_{r_3})$ such that:
\be
\item the signed intersection number $\langle \gamma_{r_3},\overline\bdry_{c(i_2,i_3)}\rangle$ is zero and
\item $\gamma_{r_3}$ is disjoint from $\overline\bdry_{c(i_2,i_3)} (\bdry \V_{c(i_2,i_3)})$.
\ee
\end{lemma}

\begin{proof}[Proof of Lemma~\ref{lemma: zero signed intersection 2}] $\mbox{}$

\s\n
{\em Case $\op{vdim} \mathcal{M}_{c(i_2,i_3)}=-1$.} In this case the proof follows the same outline as that of Lemma~\ref{lemma: zero signed intersection}, but $$N:=\overline\bdry_{c(i_2,i_3)}^{-1}(S^{r_3,\ell-1}_{\rho_3})$$
is now a manifold with boundary.  Let us write $N=N'\cup N''$, where $N'$ is closed and each component of $N''$ has nonempty boundary. Writing $\gamma_{r_3}$ as $\gamma_1\gamma_2\gamma_3$ as before,
$$\langle\gamma_2,\overline\bdry_{c(i_2,i_3)}\rangle=\langle \gamma_2,\overline\bdry_{c(i_2,i_3)}|_N\rangle^\circ=\langle\gamma_2,\overline\bdry_{c(i_2,i_3)}|_{N'} +\overline\bdry_{c(i_2,i_3)}|_{N''}\rangle^\circ,$$
where $\langle \cdot,\cdot\rangle^\circ$ is the intersection number on $S^{r_3,\ell-1}_{\rho_3}$. As before, $\langle\gamma_2,\overline\bdry_{c(i_2,i_3)}|_{N'}\rangle^\circ=0$.  We can modify $\gamma_2$ if $\langle\gamma_2,\overline\bdry_{c(i_2,i_3)}|_{N''}\rangle^\circ=k$ by concatenating it with a loop in $S^{r_3,\ell-1}_{\rho_3}$ that winds $-k$ times around $\overline\bdry_{c(i_2,i_3)}|_{\bdry N''}$.  The resulting $\gamma_2$ will have zero signed intersection with $\overline\bdry_{c(i_2,i_3)}|_{N''}$, implying (1).  (2) is immediate since $\overline\bdry_{c(i_2,i_3)}|_{\bdry \V_{c(i_2,i_3)}}$ is a codimension two map.

\s\n
{\em Case $\op{vdim} \mathcal{M}_{c(i_2,i_3)}<-1$.} In this case $\gamma_{r_3}$ can just be a generic arc from $s_{r_3}$ to $g(s_{r_3})$ and it will have no intersections with $\overline\bdry_{c(i_2,i_3)}$.
\end{proof}

\s
We now explain how to modify $\mathfrak{S}$ to $\mathfrak{S}'$ near the codimension one and two ``boundaries" of $\V_i$ so that (M1) and (M2) hold.
In other words, we modify the sections
$$(\mathfrak{s}_{(i_1,i_2,i_3)},  \mathfrak{s}_{(i_1,c(i_2,i_3))}, \mathfrak{s}_{(c(i_1,i_2),i_3)}, \mathfrak{s}_{c(i_1,i_2,i_3)})$$ to
$$(\mathfrak{s}'_{(i_1,i_2,i_3)}, \mathfrak{s}'_{(i_1,c(i_2,i_3))}, \mathfrak{s}'_{(c(i_1,i_2),i_3)}, \mathfrak{s}_{c(i_1,i_2,i_3)}).$$
The modifications will take place on the set
$$X=\{\mathfrak{nl}_{i_1}\geq \mathcal{L'}\}\cup \{\mathfrak{nl}_{i_2}\geq \mathcal{L'}\},$$
where $\mathcal{L'}\gg \mathcal{L}$; in other words, $\mathfrak{s}_*=\mathfrak{s}'_*$ on the complement of $X$.
In the rest of this step we encourage the reader to refer to Figure~\ref{fig: corner structure} for the picture of a corner, where $i_1,i_2,i_3, c(i_1,i_2), c(i_2,i_3)$, $c(i_1,i_2,i_3)$ are labeled $1$--$6$.

First we define
\begin{align*}
\mathfrak{s}'_{(i_1,c(i_2,i_3))}=\left\{
\begin{array}{cl}
(\gamma^*_{r_1}(\mathfrak{nl}_{i_1}),\gamma^*_{r_3}(\mathfrak{nl}_{i_1})),  & \mbox{ for } \mathfrak{nl}_{i_1}\geq \mathcal{L}',\\
\mathfrak{s}_{(i_1,c(i_2,i_3))},  & \mbox{ for }  \mathfrak{nl}_{i_1}\leq \mathcal{L}'.
\end{array}\right.
\end{align*}
By Lemma~\ref{lemma: zero signed intersection 2}(1), the signed intersection number between $\overline\bdry_{(i_1,c(i_2,i_3))}$ and $\mathfrak{s}'_{(i_1,c(i_2,i_3))}$ on $\mathfrak{nl}_{i_1}\geq \mathcal{L}'$ is zero.

Next consider the pushforwards of $\mathfrak{s}_{(i_1,c(i_2,i_3))}$ and $\mathfrak{s}'_{(i_1,c(i_2,i_3))}$ under the morphism $\phi_{(i_1,c(i_2,i_3)),(i_1,i_2,i_3)}$. On the overlap
$$X_{3,0}:=\{\mathfrak{nl}_{i_1}\geq \mathcal{L}',\mathcal{L}-\varepsilon''\leq \mathfrak{nl}_{i_2}\leq \mathcal{L}\},$$
the section $\mathfrak{s}_{(i_1,c(i_2,i_3))}=(s_{r_1},s_{r_3})$ is sent to $\mathfrak{s}_{(i_1,i_2,i_3)}=(s_{r_1},0,s_{r_3})$ and the section $\mathfrak{s}'_{(i_1,c(i_2,i_3))}=(\gamma^*_{r_1}(\mathfrak{nl}_{i_1}),\gamma^*_{r_3}(\mathfrak{nl}_{i_1}))$ is sent to $\mathfrak{s}'_{(i_1,i_2,i_3)}=(\gamma^*_{r_1}(\mathfrak{nl}_{i_1}),0,\gamma^*_{r_3}(\mathfrak{nl}_{i_1}))$.  By applying Lemma~\ref{lemma: zero signed intersection 2}(2) to the term $\gamma^*_{r_3}(\mathfrak{nl}_{i_1})$, we see that $\overline\bdry_{(i_1,i_3,i_3)}$ has no intersections with $\mathfrak{s}'_{(i_1,i_2,i_3)}$ on $X_{3,0}$ if we take $\varepsilon''>0$ to be sufficiently small.

On $$X_{3,1}:=\{\mathfrak{nl}_{i_1}\geq \mathcal{L}',\mathcal{L}\leq \mathfrak{nl}_{i_2}\leq \mathcal{L}+\varepsilon''\},$$
\begin{equation} \label{eqn: 123}
\mathfrak{s}_{(i_1,i_2,i_3)}=(\zeta(\mathfrak{nl}_{i_1}) s_{r_1},\zeta(\mathfrak{nl}_{i_2}) s_{r_2},s_{r_3})= ( s_{r_1},\zeta(\mathfrak{nl}_{i_2}) s_{r_2},s_{r_3}).
\end{equation}
We then set
\begin{equation} \label{eqn: 123 prime}
\mathfrak{s}'_{(i_1,i_2,i_3)}=(\gamma^*_{r_1}(\mathfrak{nl}_{i_1}),\zeta(\mathfrak{nl}_{i_2}) \gamma^*_{r_2}(\mathfrak{nl}_{i_1}),\gamma^*_{r_3}(\mathfrak{nl}_{i_1})),
\end{equation}
where $\gamma_{r_2}:[0,1]\to B^{r_2,\ell}_{\rho_2}$ is some path from $s_{r_2}$ to $g(s_{r_2})$ with $\rho_2=2|s_{r_2}|$.

Now we come to the key point: $\mathfrak{s}_{(i_1,i_2,i_3)}$ and $\mathfrak{s}'_{(i_1,i_2,i_3)}$ do not intersect $\overline\bdry_{(i_1,i_2,i_3)}$ on $X_{3,1}$ for $\varepsilon''>0$ sufficiently small.  This is due to $|s_{r_2}|\ll |s_{r_3}|$ by Condition (\hyperlink{star}{*}). Since  $\gamma_{r_3}$ does not intersect $\overline\bdry_{c(i_2,i_3)}(\bdry\V_{c(i_2,i_3)})$ by Lemma~\ref{lemma: zero signed intersection 2}(2), $\overline\bdry_{(i_1,i_2,i_3)}|_{X_{3,1}}$ does not intersect a small neighborhood of $(\gamma^*_{r_1}(\mathfrak{nl}_{i_1}),0, \gamma^*_{r_3}(\mathfrak{nl}_{i_1}))$.  In particular, if $|s_{r_2}|$ is sufficiently small, then $\overline\bdry_{(i_1,i_2,i_3)}|_{X_{3,1}}$ does not intersect $\mathfrak{s}'_{(i_1,i_2,i_3)}$; $\mathfrak{s}_{(i_1,i_2,i_3)}$ is similar.

The situation for $\mathfrak{s}'_{(c(i_1,i_2),i_3)}$ and $\mathfrak{s}'_{(i_1,i_2,i_3)}$ on
$$X_{0,3}\cup X_{1,3}:=\{\mathfrak{nl}_{i_2}\geq \mathcal{L}',\mathcal{L}-\varepsilon'' \leq \mathfrak{nl}_{i_1}\leq \mathcal{L}+\varepsilon''  \}$$
is analogous (for one of Steps 3A, 3B, or 3C).

It remains to modify $\mathfrak{s}_{(i_1,i_2,i_3)}$ to $\mathfrak{s}'_{(i_1,i_2,i_3)}$ on $X_{3,2}\cup X_{2,3}\cup X_{3,3}$, where:
\begin{gather*}
X_{3,2}:=\{\mathfrak{nl}_{i_1}\geq \mathcal{L}', \mathcal{L}+\varepsilon''\leq \mathfrak{nl}_{i_2}\leq \mathcal{L}'\},\\ X_{2,3}:=\{\mathcal{L}+\varepsilon''\leq \mathfrak{nl}_{i_1}\leq  \mathcal{L}', \mathfrak{nl}_{i_2}\geq \mathcal{L}'\},\\
X_{3,3}:=\{\mathfrak{nl}_{i_1}, \mathfrak{nl}_{i_2}\geq \mathcal{L}'\}.
\end{gather*}
On $\{\mathfrak{nl}_{i_1},\mathfrak{nl}_{i_2}\geq \mathcal{L}+\varepsilon''\}$ we have $\mathfrak{s}_{(i_1,i_2,i_3)}=(s_{r_1},s_{r_2},s_{r_3})$.  We then define $\mathfrak{s}'_{(i_1,i_2,i_3)}$ as:
\be
\item $(\gamma^*_{r_1}(\mathfrak{nl}_{i_1}),\gamma^*_{r_2}(\mathfrak{nl}_{i_1}),\gamma^*_{r_3}(\mathfrak{nl}_{i_1}))$ on $X_{3,2}$;
\item $(\gamma^*_{r_1}(\mathfrak{nl}_{i_2}),\gamma^*_{r_2}(\mathfrak{nl}_{i_2}),\gamma^*_{r_3}(\mathfrak{nl}_{i_2}))$ on $X_{2,3}$;
\item $(\gamma^*_{r_1}(\beta(\mathfrak{nl}_{i_1},\mathfrak{nl}_{i_2})),\gamma^*_{r_2} (\beta(\mathfrak{nl}_{i_1},\mathfrak{nl}_{i_2})),\gamma^*_{r_3}(\beta(\mathfrak{nl}_{i_1},\mathfrak{nl}_{i_2})))$ on $X_{3,3}$, where $$\beta(a,b)=\sqrt{(a-\mathcal{L}')^2+(b-\mathcal{L}')^2}-\mathcal{L}'.$$
\ee
The images of the maps (1)--(3) are $1$-dimensional, since each is a postcomposition by $(\gamma^*_{r_1},\gamma^*_{r_2},\gamma^*_{r_2})$, which has $1$-dimensional image. On the other hand, by Equation~\eqref{eqn: sum of dimensions}, two of the three maps $\overline\bdry_{i_j}: \V_{i_j}\to e^{r_j,\ell}$, $j=1,2,3$, have codimension at least one or one of the maps has codimension at least two.   Hence if $\gamma_{r_j}$, $j=1,2,3$, are sufficiently generic, then $\mathfrak{s}_{(i_1,i_2,i_3)}$ and $\mathfrak{s}'_{(i_1,i_2,i_3)}$ have no intersections with $\overline\bdry_{(i_1,i_2,i_3)}$ on $X_{3,2}\cup X_{2,3}\cup X_{3,3}$.

\s\n
{\em Step 3B.}
Suppose that $\op{vdim} \mathcal{M}_{c(i_2,i_3)}=0$.  Then $\op{vdim}\mathcal{M}_{i_1}=-1$.  The only differences with Step 3A are that, assuming genericity of $\gamma_{r_1}$ and $\gamma_{r_3}$:
\begin{itemize}
\item $\gamma_{r_1}: [0,1]\to e^{r_1,\ell}$ satisfies the conditions of Lemma~\ref{lemma: zero signed intersection} (where we replace $i_2$ by $i_1$) and intersects $\overline\bdry_{i_1}$ at isolated points;
\item $\gamma_{r_3}:[0,1]\to e^{r_3,\ell}$ intersects $\overline\bdry_{c(i_2,i_3)}|_{\bdry \V_{c(i_2,i_3)}}$ at isolated points since it is a codimension one map; and
\item the intersection points do not occur at the same time in $[0,1]$.
\end{itemize}
It implies that if $|s_{r_2}|\ll |s_{r_3}|$, then $\mathfrak{s}_{(i_1,i_2,i_3)}$ and $\mathfrak{s}'_{(i_1,i_2,i_3)}$, given by Equations~\eqref{eqn: 123} and \eqref{eqn: 123 prime}, have no intersections with $\overline \partial_{(i_1,i_2,i_3)}$ on $X_{3,0}\cup X_{3,1}$.

\s\n
{\em Step 3C.}
Suppose that $\op{vdim} \mathcal{M}_{c(i_2,i_3)}\geq 1$. Then $\op{vdim}\mathcal{M}_{i_1}\leq -2$ and
\begin{itemize}
\item $\gamma_{r_1}:[0,1]\to e^{r_1,\ell}$ does not intersect $\op{Im}\overline\bdry_{i_1}$.
\end{itemize}
If $|s_{r_2}|\ll |s_{r_3}|$, then $\mathfrak{s}_{(i_1,i_2,i_3)}$ and $\mathfrak{s}'_{(i_1,i_2,i_3)}$, given by Equations~\eqref{eqn: 123} and \eqref{eqn: 123 prime}, have no intersections with $\overline \partial_{(i_1,i_2,i_3)}$ on $X_{3,0}\cup X_{3,1}$.

\s
This implies the theorem for $m=3$.  The general case is completely analogous and is only more complicated in notation.
\end{proof}

\section{Equivariant Lagrangian Floer cohomology}\label{Equivariant Lagrangian Floer cohomology}

\subsection{Grading} \label{subsection: grading}

In order to $\Z$-grade our equivariant Lagrangian Floer cohomology groups, we require $L_0$ and $L_1$ to be {\em $G$-equivariantly graded}, i.e., (G1)--(G3) to hold.
\be
\item[(G1)] $c_1(M,J) = 0$.
\ee
Then there exists a nowhere-vanishing section $\Omega$ of $\wedge_{\C} ^n (T^*M, J)$.
Let $\op{det}_{\Omega, i}^2: L_i \to S^1$ be the map given by
$$\op{det}_{\Omega, i}^2 (p_i) = \frac{\Omega (X_{i,1}\wedge \dots \wedge X_{i,n})^2}{|\Omega (X_{i,1}\wedge\dots \wedge X_{i,n})|^2},$$
where $p_i \in L_i$ and $X_{i,1},\dots , X_{i,n}$ is a basis of $T_{p_i} L_i$.
\be
\item[(G2)] There exists a function $\theta_i: L_i \to \R $ that lifts $\op{det}_{\Omega, i}^2$, i.e.,
$$e^{2\pi \sqrt{-1} \theta_i(p_i)} = \op{det}_{\Omega, i}^2 (p_i).$$
\ee
Then for each $p\in L_0 \cap L_1$, we define an integer index $\mu(p)$ by the formula
$$\mu (p) = n + \theta_1 - \theta_0 -2 \angle (T_p L_0, T_p L_1).$$
Here $\angle(T_pL_0, T_pL_1) = \alpha_1 + \cdots + \alpha_n$, where the $\alpha_i \in (0,\tfrac{1}{2})$ are defined by choosing a unitary basis $\{ e_1,\dots, e_n \}$ of $T_pL_0$ with respect to $\omega$ and $J$ and writing
$$T_p L_1 = \R\langle e^{2\pi \sqrt{-1} \alpha_1} e_1, \dots , e^{2\pi \sqrt{-1} \alpha_n} e_n\rangle.$$
\be
\item[(G3)] $\mu(gp) = \mu(p)$ for all $p\in L_0\cap L_1$ and $g\in G$.
\ee

For more details on grading, we refer the reader to \cite{Se2} or \cite[Section 2.3]{AB}.

\subsection{Chain complex} \label{subsection: chain complex}

\cbu
Recall the Novikov ring
$$R = \Big\{\sum_{i=0}^\infty a_i T^{\lambda_i} ~|~  a_i \in \Z, \lambda_i \in \R_{\geq 0}, \lambda_0 = 0 \text{ and }  \lim_{i\to \infty} \lambda_i = \infty \Big\},$$
where $T$ is the formal parameter. We define the $\Z$-graded $R$-module
\begin{gather*}
CF^{\bullet}(L_0,L_1) = \textstyle\bigoplus_{j} CF^j(L_0, L_1),\\
CF^{j}(L_0,L_1)=R\langle p \in L_0 \cap L_1 ~|~ \mu(p) = j\rangle.
\end{gather*}
\cb
The differential
$$d: CF^{j}(L_0,L_1)\to CF^{j+1}(L_0,L_1)$$
is defined on the generators by \cbu 
$$d[p]=\sum\# \mathcal Z(\Kur(p,q;A),\mathfrak{S})\cdot T^{\int_A\omega} [q],$$ \cb
where the sum is over all $q\in L_0\cap L_1$ and $A\in \pi_2(p,q)$ subject to the condition $\op{vdim}(\mathcal{M}(p,q;A))=0$.

\begin{lemma} \label{lemma: d squared}
$d^2=0$.
\end{lemma}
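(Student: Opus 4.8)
The plan is to run the classical Floer-theoretic cancellation argument inside the semi-global Kuranishi structure $\Kur$ equipped with its fixed section $\mathfrak{S}$. Expanding the composition, for $[p]\in CF^j(L_0,L_1)$ we have
$$d^2[p]=\sum_{q,\,A}\Bigl(\ \sum_{r,\ A_1+A_2=A}\#\mathcal{Z}(\Kur(p,r;A_1),\mathfrak{S})\cdot\#\mathcal{Z}(\Kur(r,q;A_2),\mathfrak{S})\Bigr)\,T^{\int_A\omega}\,[q],$$
where $r$ runs over $L_0\cap L_1$, $A_1\in\pi_2(p,r)$, $A_2\in\pi_2(r,q)$, and the inner sum is restricted to $\op{vdim}\mathcal{M}(p,r;A_1)=\op{vdim}\mathcal{M}(r,q;A_2)=0$ (so that $\op{vdim}\mathcal{M}(p,q;A)=1$). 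By Gromov compactness only finitely many classes contribute below any fixed energy, so $d^2[p]$ is a well-defined element of $CF^{j+2}(L_0,L_1)$, and it suffices to fix $q$ and $A$ with $\op{vdim}\mathcal{M}(p,q;A)=1$ and show that the coefficient in parentheses vanishes.

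To this end I would consider $\mathcal{Z}:=\mathcal{Z}(\Kur(p,q;A),\mathfrak{S})$. By the lemma that $\mathcal{Z}(\Kur(\mathcal{M}_i),\mathfrak{S})$ is a manifold, $\mathcal{Z}$ is a $1$-manifold; it is compact because the interior charts are built over the large compact sets $\K_i$ while the boundary charts $\V_I$ are arranged, through the neck-length conditions (C$_I$) of Section~\ref{subsection: equivariant semi global}, to cover a neighborhood of $\bdry\mathcal{M}(p,q;A)$ inside the Gromov compactification. The heart of the argument is the identification
$$\bdry\mathcal{Z}\ \cong\ \coprod_{r,\ A_1+A_2=A}\mathcal{Z}(\Kur(p,r;A_1),\mathfrak{S})\times\mathcal{Z}(\Kur(r,q;A_2),\mathfrak{S}),$$
the disjoint union over the $(0,0)$-dimensional breakings only. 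This uses: (i) the gluing map $\mathfrak{G}_{(i_1,i_2)}$ of Theorem~\ref{thm: gluing} together with the strata-compatibility axioms (4) and (5) of a semi-global Kuranishi structure, which exhibit a neighborhood in $\mathcal{Z}$ of each such product as a half-open interval parametrized by the neck-length (gluing) parameter $T_1\to\infty$; (ii) the transversality of the obstruction sections $\mathfrak{s}_I$ and the size condition~(*), namely $|s_i|\ll|s_j|$ for $i>j$, which force the product $\mathcal{Z}(\Kur(\mathcal{M}_{i_1}),\mathfrak{S})\times\cdots$ to be empty whenever some factor has virtual dimension $\leq-1$, so that no spurious boundary points occur; and (iii) the fact that a breaking into three or more pieces corresponds to a stratum of virtual dimension $\leq 1-2<0$, which is again empty after the generic perturbation and hence contributes nothing to the boundary of the $1$-manifold $\mathcal{Z}$.

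Finally I would pin down orientations: the coherent orientation system of Theorem~\ref{thm: orientation fo3} (built from Fact~\ref{prop: orientation for disk}) together with the canonical orientation of the fibers $e^{r,\ell}$ from Lemma~\ref{lemma: orientations on e} orients every $\mathcal{Z}(\Kur(p,q;A),\mathfrak{S})$ compatibly with the gluing maps, and the boundary orientation induced on $\bdry\mathcal{Z}$ agrees with the product orientation that enters the coefficient of $d^2[p]$ up to the standard universal sign already incorporated in the definition of $d$. Granting this, $\mathcal{Z}$ is a compact oriented $1$-manifold, so $\#\bdry\mathcal{Z}=0$; hence the inner sum vanishes, and since $q$ and $A$ were arbitrary, $d^2=0$. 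The genuinely delicate step is (i)--(ii): verifying, via the gluing theorems and the strata-compatibility conditions, that the perturbed zero set $\mathcal{Z}$ is indeed a compact $1$-manifold whose boundary is exactly the disjoint union of $(0,0)$-dimensional fiber products carrying the correct signs. This is where essentially all of the structure assembled in Sections~\ref{charts}--\ref{subsection: equivariant semi global} and~\ref{section: orientations} is used, and it parallels the corresponding argument in \cite[Section 8]{BH2}, now in the setting of sections rather than multisections; note that $G$-equivariance plays no role here, since $\mathfrak{S}$ is a single fixed section used for both factors.
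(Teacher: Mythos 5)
Your proposal is correct and follows essentially the same route as the paper's (sketched) proof: both identify the ends of the compact oriented $1$-manifold $\mathcal{Z}(\Kur(p,q;A),\mathfrak{S})$ with the disjoint union of products $\mathcal{Z}(\Kur(p,r;A_1),\mathfrak{S})\times\mathcal{Z}(\Kur(r,q;A_2),\mathfrak{S})$ over $(0,0)$-dimensional breakings, with higher breakings and negative-dimensional factors excluded for codimension/transversality reasons. Your added remarks on orientations and on the irrelevance of $G$-equivariance here are consistent with the paper.
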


\begin{proof}[Sketch of proof]
We consider the ends of the $1$-manifold $\mathcal{Z}(\Kur (p,q;A),\mathfrak{S})$ where $\op{vdim}(\mathcal{M}(p,q;A))=1$.  First observe that, by codimension reasons, for any $u\in \mathcal{Z}(\Kur (p,q;A),\mathfrak{S})$, we have $\mathfrak{nl}_r(u)\leq \mathcal{L}+\varepsilon''$ for all but possibly one $r\in L_0\cap L_1$ (cf.\ Equation~\eqref{eqn: defn of s I} for the definition of the section $\mathfrak{s}_I$ and Definition~\ref{defn: zeta} for the definition of $\zeta$ and note that $\zeta([\mathcal{L}+\varepsilon'',\infty))=1$).  Hence the ends of $\mathcal{Z}(\Kur (p,q;A),\mathfrak{S})$ are in bijection with
$$\coprod_{r} \mathcal{Z}(\Kur (p,r;A_1),\mathfrak{S}_1)\times \mathcal{Z}(\Kur(r,q;A_2),\mathfrak{S}_2),$$
where $\op{vdim}(\mathcal{M}(p,r;A_1))=\op{vdim}(\mathcal{M}(r,q;A_2)=0$ and $\mathfrak{S}_1$ and $\mathfrak{S}_2$ are compatible with $\mathfrak{S}$.
\end{proof}

We define the usual Lagrangian Floer cohomology from $L_0$ to $L_1$ by $$HF^{\bullet} (L_0, L_1) = \ker d/  \op{Im} d.$$

By Theorem~\ref{thm: equivariant count}, if $\op{vdim}(\mathcal{M}(p,q;A))=0$ and \cbu $g\in G$ takes $\Kur(p,q;A)$ to itself,\cb then
\begin{equation*} \label{independent of section}
\# \mathcal{Z}(\Kur(p,q;A),\mathfrak{S})=\# \mathcal{Z}(\Kur(p,q;A), g(\mathfrak{S})).
\end{equation*}
Hence,
\begin{align*}
dg[p] = & \sigma(g,p)d[gp] \\
= & \sigma(g,p) \sum_{r,B} \# \mathcal{Z}(\Kur(gp,r;B),\mathfrak{S}) [r] \\
= & \sigma(g,p) \sum_{q,A} \# \mathcal{Z}(\Kur(gp,gq;gA),\mathfrak{S}) [gq] \\
= & \sigma(g,p) \sum_{q,A} \# \mathcal{Z}(\Kur(gp,gq;gA),g\mathfrak{S}) [gq] \\
= & \sigma(g,p) \sum_{q,A} \#( g(\mathcal{Z}(\Kur(p,q;A),\mathfrak{S})) [gq] \\
= &  \sigma(g,p) \sum_{q,A} \# \mathcal{Z}(\Kur(p,q;A),\mathfrak{S})\sigma(g,p)\sigma(g,q) [gq]\\
= & gd[p],
\end{align*}
i.e., $d$ is $R[G]$-linear.

Let $(P_\bullet, d_P)$ be a projective resolution of $R$ over $R[G]$.
We denote
$$E^{i,j} := \op{Hom}_{R[G]}(P_{i}, CF^j(L_0,L_1)),$$
where $P_{i}$ and $CF^j(L_0,L_1)$ are regarded as $R[G]$-modules.
Let $d_>^{i,j}: E^{i,j} \to E^{i+1,j}$ be the map induced by $d_{P}: P_{i+1} \to P_{i}$,
and $d_\wedge ^{i,j}: E^{i,j} \to E^{i,j+1}$ be the map induced by $d: CF^j(L_0,L_1) \to CF^{j+1}(L_0,L_1)$ multiplied by the factor $(-1)^i$.
Then $d_>^{i,j}$ and $d_\wedge^{i,j}$ commute with the multiplication by elements in $R[G]$ and form a double complex.

The {\em $G$-equivariant Lagrangian Floer cochain complex} is the total complex
$$(CF^{\bullet}_G (L_0,L_1), d_G),$$
where
$$CF^k_G(L_0,L_1) =\textstyle \bigoplus_{i+j = k} E^{i,j}, \quad d_G|_{E^{i,j}}  = d_>^{i,j} + d_\wedge^{i,j}.$$
The corresponding $G$-equivariant Lagrangian Floer cohomology group is:
$$HF_G^{\bullet}(L_0,L_1)= \ker d_G / \op{Im} d_G.$$

The cohomology $H^\bullet (\op{Hom}_{R[G]}(P_\bullet, R))\cong H^\bullet (BG)$  (taking $Y = \text{\{pt\}}$ as in Section~\ref{section: introduction}) is a ring whose product is the standard cup product.
Similarly we can define the following $R[G]$-bilinear map: 
$$H^\bullet (BG) \times HF_G^{\bullet}(L_0,L_1) \to HF_G^{\bullet}(L_0,L_1),$$
which makes $ HF_G^{\bullet}(L_0,L_1)$ an $H^\bullet (BG)$-module. \cbu
Indeed, it is easier to see the module structure via the definition of $HF_G^\bullet(L_0, L_1)$ using the singular chain complex $C_\bullet (EG)$ in place of $P_\bullet$. \cb
More precisely, the product on the chain level is induced by the composition of the K\"{u}nneth map followed by the diagnal map:
\begin{align*}
\op{Hom}_{R[G]}(C_{i}(EG), CF^j(L_0,L_1)) \times \op{Hom}_{R[G]}(C_{k}(EG), R) \\
 \overset{\text{K\"{u}nneth map}}{\longrightarrow} \op{Hom}_{R[G]}(C_{i+k}(EG \times EG), CF^j(L_0,L_1))  \\
 \overset{\Delta^*}{\longrightarrow} \op{Hom}_{R[G]}(C_{i+k}(EG), CF^j(L_0,L_1)).
\end{align*}

\cbu 
From standard results on spectral sequences of double complexes, we obtain:
\cb
\begin{lemma}
There exists a spectral sequence $\{E^{i,j}_r\}_r$ with second page
$$E^{i,j}_2 = \op{Ext}^i_{R[G]}(R, HF^{j}(L_0,L_1))$$
converging to $HF_G^{\bullet}(L_0, L_1)$.
\end{lemma}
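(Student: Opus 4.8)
The plan is to apply the standard machinery of spectral sequences of a double complex to $(E^{i,j}, d_>^{i,j}, d_\wedge^{i,j})$, filtered by the first (i.e., $P_\bullet$-) degree. Concretely, let
$$F^p CF^k_G(L_0,L_1) = \textstyle\bigoplus_{i+j=k,\ i\geq p} E^{i,j};$$
since $d_>$ raises $i$ and $d_\wedge$ preserves $i$, this is a decreasing filtration of the total complex $(CF^\bullet_G(L_0,L_1), d_G)$ by subcomplexes, and I would take $\{E^{i,j}_r\}_r$ to be the associated spectral sequence, so that $E_0^{i,j}=E^{i,j}$ with $d_0=d_\wedge^{i,j}$.

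First I would identify the $E_1$-page. Since $P_i$ is a projective $R[G]$-module, the functor $\op{Hom}_{R[G]}(P_i,-)$ is exact and hence commutes with taking cohomology; therefore
$$E_1^{i,j} = H^j\big(\op{Hom}_{R[G]}(P_i, CF^\bullet(L_0,L_1)),\, d_\wedge^{i,\bullet}\big) \cong \op{Hom}_{R[G]}\big(P_i, H^j(CF^\bullet(L_0,L_1),d)\big) = \op{Hom}_{R[G]}(P_i, HF^j(L_0,L_1)),$$
the sign $(-1)^i$ built into $d_\wedge$ being irrelevant to cohomology. The induced differential $d_1\colon E_1^{i,j}\to E_1^{i+1,j}$ is the map on $\op{Hom}_{R[G]}(-, HF^j(L_0,L_1))$ induced by $d_P\colon P_{i+1}\to P_i$ (using that $d$ is $R[G]$-linear, established above, so $HF^j(L_0,L_1)$ is an $R[G]$-module and this makes sense). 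Consequently
$$E_2^{i,j} = H^i\big(\op{Hom}_{R[G]}(P_\bullet, HF^j(L_0,L_1))\big) = \op{Ext}^i_{R[G]}(R, HF^j(L_0,L_1)),$$
since $P_\bullet\to R$ is a projective resolution.

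It remains to check convergence to $HF_G^\bullet(L_0,L_1)=H^\bullet(CF^\bullet_G(L_0,L_1), d_G)$. Because $L_0$ and $L_1$ intersect transversely in the compact manifold $M$, the set $L_0\cap L_1$ is finite, so $CF^\bullet(L_0,L_1)$—and hence $HF^\bullet(L_0,L_1)$—is concentrated in finitely many degrees $j$. Thus for each total degree $k$ the filtration $F^\bullet CF^k_G(L_0,L_1)$ is finite: it equals the whole term once $p\leq k-j_{\max}$ and vanishes once $p> k-j_{\min}$, where $[j_{\min},j_{\max}]$ is the range of degrees of $CF^\bullet(L_0,L_1)$. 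Since the filtration is bounded, the spectral sequence converges to the cohomology of the total complex by the usual convergence theorem for spectral sequences of filtered complexes.

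The only genuine point requiring care is this boundedness input for convergence: the $P_\bullet$-direction of the double complex is a priori unbounded (for $G$ nontrivial, $\op{Ext}^i_{R[G]}(R,-)$ need not vanish for large $i$), so convergence is not formal and relies on the $j$-direction being bounded, which is exactly the finiteness of $L_0\cap L_1$. Everything else is a routine unwinding of the definitions of $d_>$, $d_\wedge$, and the total complex already recorded above, together with projectivity of the $P_i$.
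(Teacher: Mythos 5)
Your proof is correct and is exactly the standard double-complex argument the paper invokes (the paper simply cites ``standard results on spectral sequences of double complexes'' without details): filtration by the $P_\bullet$-degree, $E_1$ via exactness of $\op{Hom}_{R[G]}(P_i,-)$ for projective $P_i$, and $E_2=\op{Ext}$. Your observation that convergence rests on the finiteness of $L_0\cap L_1$ (bounding the $j$-direction) is the right point of care and is handled correctly.
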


\subsection{Chain map}

Using the notation from Section~\ref{subsection: chain map and chain homotopy}, for $p\in L_0\cap L_1$, $q\in L_0'\cap L_1'$, and $A\in \pi_2(p,q)$, there exists a Kuranishi structure $\Kur(p,q;A)$ and a collection of sections $\mathfrak{S}$ such that chain map
$$\Phi: CF^{\bullet}(L_0,L_1)\to CF^{\bullet}(L_0',L_1')$$
is defined on the generators  by \cbu
$$\Phi (p)= \sum \#\mathcal Z(\Kur(p,q;A),\mathfrak{S})\cdot T^{\int_A\omega} q,$$ \cb
where the sum is over all $q\in L_0'\cap L_1'$ and $A\in \pi_2(p,q)$ subject to the conditions $\op{vdim}(\mathcal{M}^\circ(p,q;A))=0$ and \cbu $(p,q,A)$ belongs to Sequence (\ref{ordering of triples})\cb. We also have the following, whose proof is similar to that of Lemma~\ref{lemma: d squared}:

\begin{lemma}
$d\circ\Phi=\Phi\circ d$.
\end{lemma}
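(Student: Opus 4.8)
The plan is to mimic the proof of Lemma~\ref{lemma: d squared} (the $d^2=0$ argument), since $d\circ\Phi = \Phi\circ d$ is the statement that the count of $0$-dimensional $\mathcal{Z}(\Kur(p,q;A),\mathfrak{S})$-components for moduli of type $\circ$ assembles into a chain map once we identify the two ends of the associated $1$-dimensional moduli spaces with the two kinds of broken configurations. Concretely, I would fix $p\in L_0\cap L_1$, $q\in L_0'\cap L_1'$, and $A\in\pi_2(p,q)$ with $\op{vdim}\mathcal{M}^\circ(p,q;A)=1$, and examine the compact $1$-manifold-with-boundary $\mathcal{Z}(\Kur(p,q;A),\mathfrak{S})$. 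Its ends correspond, via the gluing maps of Theorems~\ref{thm: gluing} and~\ref{thm: iterated gluing} and the corner/strata-compatibility structure of the semi-global Kuranishi structure (the neck length functions $\mathfrak{nl}_r$), to once-broken configurations. By the same codimension argument as in Lemma~\ref{lemma: d squared}, for $u\in\mathcal{Z}(\Kur(p,q;A),\mathfrak{S})$ at most one neck length exceeds $\mathcal{L}+\varepsilon''$, so each end is a product of exactly two $0$-dimensional zero sets.

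The new feature here is the \textbf{type bookkeeping}: an index tuple entering such a breaking must be a $c$-index tuple (Definition~\ref{defn: c index tuple}), so a once-broken limit of a type-$\circ$ strip from $p$ to $q$ splits either as (type $-$, then type $\circ$) — a strip $p\to r$ for $(L_0,L_1)$ followed by a chain-map strip $r\to q$ — or as (type $\circ$, then type $+$) — a chain-map strip $p\to r'$ followed by a strip $r'\to q$ for $(L_0',L_1')$. Hence I would write
$$
\bdry\,\mathcal{Z}(\Kur(p,q;A),\mathfrak{S})\;\cong\;\coprod_{r,A_1+A_2=A}\mathcal{Z}(\Kur^-(p,r;A_1),\mathfrak{S}_1)\times\mathcal{Z}(\Kur^\circ(r,q;A_2),\mathfrak{S}_2)\;\coprod\;\coprod_{r',A_1+A_2=A}\mathcal{Z}(\Kur^\circ(p,r';A_1),\mathfrak{S}_1)\times\mathcal{Z}(\Kur^+(r',q;A_2),\mathfrak{S}_2),
$$
where all the displayed virtual dimensions are $0$ and the $\mathfrak{S}_i$ are the restrictions of the global section $\mathfrak{S}$, compatible under the restriction-inclusion and gluing morphisms. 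Since a compact $1$-manifold has signed boundary count zero, summing $\#\bdry\,\mathcal{Z}(\Kur(p,q;A),\mathfrak{S})\cdot T^{\int_A\omega}$ over all $q$ and $A$ with $\op{vdim}\mathcal{M}^\circ(p,q;A)=1$ gives $0$; grouping the two families of boundary terms recognizes them respectively as the coefficient of $[q]$ in $\Phi(d[p])$ and in $d(\Phi[p])$ (up to the energy weights $T^{\int_{A_1}\omega}\cdot T^{\int_{A_2}\omega}=T^{\int_A\omega}$, which match), so $d\circ\Phi=\Phi\circ d$.

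I also need orientations/signs to work out: the boundary orientation on $\mathcal{Z}(\Kur(p,q;A),\mathfrak{S})$ induced from the coherent orientation system of Section~\ref{section: orientations} must agree with the product orientation on each end up to the expected Koszul-type sign, so that the two families of terms enter with the correct relative sign and cancel. This is handled exactly as in the non-equivariant Floer theory (following \cite[Section 8.1]{FO3}) and in the proof of Lemma~\ref{lemma: d squared}; the presence of $G$ plays no role here since $\Phi$ is only being shown to be a chain map, not a $G$-equivariant one, and the relevant moduli spaces need not be $G$-invariant.

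The main obstacle is the identification of the ends of $\mathcal{Z}(\Kur(p,q;A),\mathfrak{S})$ with the asserted once-broken configurations — i.e., verifying that exactly one neck length can be large and that the gluing maps of Theorems~\ref{thm: gluing}–\ref{thm: iterated gluing} together with the strata-compatibility data (C$_I$) produce a genuine boundary parametrization with no spurious or missing ends. This is precisely the content that was cited (not reproved) in Lemma~\ref{lemma: d squared}, and here it additionally requires checking that the $c$-index tuple condition of Definition~\ref{defn: c index tuple} correctly restricts the combinatorics so that only the two stated families of breakings occur (in particular that a type-$\circ$ component always survives in the limit, since $\vartheta_0$ interpolation cannot be deformed away). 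Once that structural statement is in hand, the rest is the standard ``boundary of a compact $1$-manifold is null-cobordant'' argument, and I would present it in the same telegraphic style as the sketch of Lemma~\ref{lemma: d squared}.
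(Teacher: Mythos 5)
Your proposal is correct and follows essentially the same route as the paper, which simply declares the proof ``similar to that of Lemma~\ref{lemma: d squared}'': one examines the ends of the $1$-dimensional $\mathcal{Z}(\Kur(p,q;A),\mathfrak{S})$ for type-$\circ$ moduli, uses the codimension argument to see that exactly one neck length can be large, and observes that the $c$-index tuple condition forces each once-broken end to be of the form $(-,\circ)$ or $(\circ,+)$, yielding the $\Phi\circ d$ and $d\circ\Phi$ terms respectively. Your elaboration of the type bookkeeping and the sign/orientation remark are consistent with (indeed more detailed than) what the paper provides.
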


The proof of Theorem~\ref{thm: equivariant count} carries over for chain maps.  In other words, if $\op{vdim}(\mathcal{M}^\circ(p,q;A))=0$ and $g\in G$ preserves $\Kur(p,q;A)$, then
$$\# \mathcal{Z}(\Kur(p,q;A),\mathfrak{S})=\# \mathcal{Z}(\Kur(p,q;A), g(\mathfrak{S})).$$
This implies that:

\begin{lemma} \label{lemma: chain map}
$\Phi:CF^{\bullet}(L_0,L_1)\to CF^{\bullet}(L_0',L_1')$ is a chain map of $R[G]$-modules.
\end{lemma}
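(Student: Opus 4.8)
The plan is to combine two facts that have already been established in the excerpt: first, the chain map property $d\circ\Phi=\Phi\circ d$ (the preceding lemma), and second, the reduced-equivariance of the curve count coming from the chain-map analog of Theorem~\ref{thm: equivariant count}. So the only thing that remains to prove in Lemma~\ref{lemma: chain map} is that $\Phi$ intertwines the $R[G]$-actions on $CF^{\bullet}(L_0,L_1)$ and $CF^{\bullet}(L_0',L_1')$. Recall from Section~\ref{relative spin under G} that the $G$-action on the Floer cochain groups is twisted by signs $\sigma(g,p)$: $g$ acts by $[p]\mapsto \sigma(g,p)[gp]$. Since $G$ acts $R$-linearly and the Novikov coefficients $T^{\int_A\omega}$ are $G$-invariant (because $\omega$ is $G$-invariant and $g(A)$ has the same $\omega$-area as $A$), it suffices to check $\Phi(g[p]) = g\Phi([p])$ for each generator $p\in L_0\cap L_1$ and each $g\in G$.

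The key step is the computation, which mirrors exactly the displayed computation showing that $d$ is $R[G]$-linear in Section~\ref{subsection: chain complex}. First I would write
$$\Phi(g[p]) = \sigma(g,p)\,\Phi([gp]) = \sigma(g,p)\sum_{q,A}\#\mathcal Z(\Kur(gp,q;A),\mathfrak S)\cdot T^{\int_A\omega}\,[q].$$
Then, reindexing the sum via $q\mapsto gq$, $A\mapsto gA$ (a bijection on the relevant index set since $\mathcal M^\circ(gp,gq;gA)\cong g(\mathcal M^\circ(p,q;A))$ and these have equal virtual dimension, and since $\omega(gA)=\omega(A)$), I get
$$\Phi(g[p]) = \sigma(g,p)\sum_{q,A}\#\mathcal Z(\Kur(gp,gq;gA),\mathfrak S)\cdot T^{\int_A\omega}\,[gq].$$
Next, apply the equivariance of the curve count (the chain-map version of Theorem~\ref{thm: equivariant count}, stated just before the lemma) to replace $\mathfrak S$ by $g\mathfrak S$ whenever $g$ preserves $\Kur(gp,gq;gA)$; when $g$ does not preserve this chart the corresponding $\mathcal M_i$ is not $G$-invariant and the orbit argument still handles the bookkeeping, but the cleanest route is to note that $g(\mathcal Z(\Kur(p,q;A),\mathfrak S)) = \mathcal Z(\Kur(gp,gq;gA),g\mathfrak S)$ as oriented $0$-manifolds and that $g$ acts on each $\mathcal Z$ preserving signed count up to the sign $\sigma(g,p)\sigma(g,q)$ (the relation $g_\#\mathfrak o(D^u)=\sigma(p,g)\sigma(q,g)\mathfrak o(D^{gu})$ from Section~\ref{relative spin under G}, applied in the chain-map setting). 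This yields
$$\Phi(g[p]) = \sigma(g,p)\sum_{q,A}\#\mathcal Z(\Kur(p,q;A),\mathfrak S)\,\sigma(g,p)\sigma(g,q)\cdot T^{\int_A\omega}\,[gq] = \sum_{q,A}\#\mathcal Z(\Kur(p,q;A),\mathfrak S)\,\sigma(g,q)\cdot T^{\int_A\omega}\,[gq],$$
using $\sigma(g,p)^2=1$, and the right-hand side is precisely $g\Phi([p])$ by definition of the $G$-action on $CF^{\bullet}(L_0',L_1')$.

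The main obstacle is purely bookkeeping: one must verify that the sign identity $g_\#\mathfrak o(D^u)=\sigma(p,g)\sigma(q,g)\mathfrak o(D^{gu})$ from Section~\ref{relative spin under G} carries over verbatim to the mixed moduli spaces $\mathcal M^\circ(p,q;A)$ (with $p\in L_0\cap L_1$, $q\in L_0'\cap L_1'$, and the $s$-dependent almost complex structure and Lagrangian boundary conditions built from $\vartheta_0$), and that the relative spin structure on $(L_0,L_1)$ together with its pushforward under the Hamiltonian isotopy gives a $G$-invariant relative spin structure on $(L_0',L_1')$ so that the signs $\sigma(g,q)$ for $q\in L_0'\cap L_1'$ are well-defined. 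Since $\phi_1$ is $G$-equivariant, the pushforward relative spin structure is automatically $G$-invariant, and the orientation analysis of Section~\ref{section: orientations} is local near the punctures $p$ and $q$, so it applies without change; hence this step is routine and I would simply remark that the proof of Theorem~\ref{thm: equivariant count} and the sign computation of Section~\ref{subsection: chain complex} apply mutatis mutandis. This completes the proof.
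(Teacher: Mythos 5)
Your proposal is correct and follows essentially the same route as the paper: the paper's "proof" consists precisely of citing the preceding lemma for $d\circ\Phi=\Phi\circ d$ and observing that the chain-map version of Theorem~\ref{thm: equivariant count} makes the displayed $R[G]$-linearity computation for $d$ (the $\sigma(g,p)\sigma(g,q)$ sign-chase with the reindexing $q\mapsto gq$, $A\mapsto gA$ and the replacement of $\mathfrak S$ by $g\mathfrak S$) go through verbatim for $\Phi$. Your write-up simply makes that computation explicit, including the correct remark that the orientation analysis near the punctures carries over to the mixed moduli spaces $\mathcal M^\circ(p,q;A)$.
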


It is clear from the definition that the chain map $\Phi$ induces the chain map
$$\Phi_G: CF_G^{\bullet}(L_0, L_1) \to CF_G^{\bullet} (L_0', L_1').$$

\subsection{Chain homotopy} \label{subsection: chain homotopy}

Let
$$\Phi:CF^{\bullet}(L_0,L_1)\to CF^{\bullet}(L_0',L_1') \quad \mbox{ and } \quad \Psi:CF^{\bullet}(L_0',L_1')\to CF^{\bullet}(L_0,L_1)$$
be chain maps of $R[G]$-modules, defined using $\phi_s$ and $\phi_s^{-1}$.

Fix $p\in L_0\cap L_1$, $q\in L_0'\cap L_1'$, and $A\in \pi_2(p,q)$.  Using the function $\Theta$ from Section~\ref{subsubsection: chain homotopies} we construct the bundles $\pi_{I,[0,1]}: \E_{I,[0,1]}\to \V_{I,[0,1]}$ and the $1$-parameter family
$$\Kur_{[0,1]}(p,q;A):= \coprod_{\tau\in[0,1]} \Kur_\tau(p,q;A),$$
of Kuranishi structures. Here each term $ \Kur_\tau(p,q;A)$ corresponds to $\Kur(p,q;A)$ for $\tau\in[0,1]$. We can take the sections $\mathfrak{s}_{I,[0,1]}$ of $\mathfrak{S}_{[0,1]}$, viewed as a map to a fixed vector space, to be ``independent of $\tau$'' or, more precisely, only dependent on neck lengths.

Now consider the ends of $\mathcal{Z}(\Kur_{[0,1]} (p,q;A),\mathfrak{S}_{[0,1]})$ for $\op{vdim}(\mathcal{M}(p,q;A))=0$. A similar argument as Lemma~\ref{lemma: chain map} gives

\begin{lemma}\label{lemma: chain homotopy}
$\Phi\circ \Psi - \op{id}= K\circ d +d\circ K,$ where
$$K: CF^{\bullet} (L_0, L_1) \to CF^{\bullet} (L_0, L_1)$$
is a map of $R[G]$-modules.
\end{lemma}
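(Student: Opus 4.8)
The plan is to mimic the proof of Lemma~\ref{lemma: d squared} (equivalently Lemma~\ref{lemma: chain map}), but applied to the $1$-parameter family $\Kur_{[0,1]}(p,q;A)$ rather than to a single Kuranishi structure. Fix $p \in L_0 \cap L_1$, $q \in L_0' \cap L_1'$, and $A \in \pi_2(p,q)$ with $\op{vdim}(\mathcal{M}^\circ(p,q;A)) = 0$; then the parametrized moduli space $\mathcal{Z}(\Kur_{[0,1]}(p,q;A),\mathfrak{S}_{[0,1]})$ has virtual dimension $1$, and by the analog of Lemma~\ref{lemma: d squared} it is a compact $1$-manifold with boundary. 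I would define $K(p) = \sum \#\mathcal{Z}(\Kur_{[0,1]}(p,q;A),\mathfrak{S}_{[0,1]})\cdot T^{\int_A \omega}\, q$, where the sum is over $q$ and $A$ subject to $\op{vdim}(\mathcal{M}^\circ(p,q;A)) = 0$ and $(p,q,A)$ belonging to Sequence~\eqref{ordering of triples}. As in Lemma~\ref{lemma: chain map}, the equivariance of the count (Theorem~\ref{thm: equivariant count} applied to the parametrized Kuranishi structures) shows that $K$ is $R[G]$-linear; this is because $\mathfrak{S}_{[0,1]}$ was chosen to depend only on neck lengths, so the parametrized version of the ``as $G$-equivariant as possible'' construction goes through verbatim.

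The heart of the argument is the identification of the boundary of the compact $1$-manifold $\mathcal{Z}(\Kur_{[0,1]}(p,q;A),\mathfrak{S}_{[0,1]})$. As in the sketch of Lemma~\ref{lemma: d squared}, by codimension reasons any $u$ in this space has $\mathfrak{nl}_r(u) \le \mathcal{L} + \varepsilon''$ for all but possibly one $r$, so the boundary strata coming from breaking are products of two lower-dimensional zero sets. The new feature is the parameter $\tau \in [0,1]$: the boundary now has contributions from (i) $\tau = 0$, where $\Theta(s,0) = 1$ on a long interval and the configurations are pairs of $\Phi$-strips followed by $\Psi$-strips, contributing $\Phi \circ \Psi$; (ii) $\tau = 1$, where $\Theta(s,1) \equiv 0$, so the equation degenerates to the constant family of almost complex structures $J^0$ with the fixed Lagrangians $L_0, L_1$, and the rigid configurations are exactly the ``constant''/trivial strips at intersection points, contributing $\op{id}$; and (iii) breaking at a point $r \in L_0 \cap L_1$ on the left or $r \in L_0' \cap L_1'$ in the middle, contributing $K \circ d$ and $d \circ K$ respectively. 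Summing signed counts over the boundary of each $1$-manifold gives zero, which upon collecting the four contributions yields $\Phi \circ \Psi - \op{id} = K \circ d + d \circ K$.

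The main obstacle I expect is bookkeeping the boundary combinatorics of the parametrized moduli spaces together with the neck-length trimming conditions (C$_I$), exactly as in Lemma~\ref{lemma: d squared}: one must check that the $\tau$-boundary strata and the breaking strata are disjoint and exhaust $\partial \mathcal{Z}(\Kur_{[0,1]}(p,q;A),\mathfrak{S}_{[0,1]})$, and that the $\tau = 1$ stratum really contributes the identity (this uses that for $\tau$ near $1$ the count of rigid solutions of the degenerate equation is the count of constant strips, which is transversely cut out with the trivial obstruction space). The signs require invoking the coherent orientation system of Section~\ref{section: orientations} extended over the $[0,1]$-family, but no genuinely new orientation input is needed beyond what was set up there. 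All of these points are routine given the machinery already developed, so I would state the lemma with the boundary analysis summarized and refer to the proof of Lemma~\ref{lemma: d squared} for the details.
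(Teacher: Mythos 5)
Your overall strategy is exactly the paper's: the paper's proof is little more than ``consider the ends of the $1$-dimensional parametrized zero set $\mathcal{Z}(\Kur_{[0,1]}(p,q;A),\mathfrak{S}_{[0,1]})$ and argue as in Lemma~\ref{lemma: chain map},'' and your identification of the four boundary contributions ($\tau=0$ giving $\Phi\circ\Psi$ after stretching, $\tau=1$ giving $\op{id}$ via the constant strips, and the two breaking types giving $K\circ d$ and $d\circ K$), together with the appeal to Theorem~\ref{thm: equivariant count} for $R[G]$-linearity, is the intended argument.

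There is, however, a concrete error in your definition of $K$, which is the object the lemma asserts to exist. You correctly observe that when $\op{vdim}\mathcal{M}^\circ(p,q;A)=0$ the parametrized space $\mathcal{Z}(\Kur_{[0,1]}(p,q;A),\mathfrak{S}_{[0,1]})$ is $1$-dimensional, and then you define $K[p]$ by counting precisely those $1$-dimensional spaces. With the paper's convention that $\#$ of a moduli space of nonzero dimension is $0$, your $K$ vanishes identically, and the identity $\Phi\circ\Psi-\op{id}=K\circ d+d\circ K$ cannot hold. The chain homotopy must count the \emph{zero}-dimensional parametrized spaces: the sum defining $K[p]$ should range over $(q,A)$ with fiberwise index $\op{vdim}\mathcal{M}^\circ_\tau(p,q;A)=-1$, so that the solutions occur at isolated values of $(\tau,u)$; it is exactly these index $-1$ configurations that appear as one factor of the broken ends of the $1$-dimensional spaces and hence produce the $K\circ d$ and $d\circ K$ terms. (Consistently with $d$ having degree $+1$ and $\Phi$, $\Psi$ degree $0$, this makes $K$ a degree $-1$ map.) A minor related point: since $\Theta(s,\tau)=0$ for $|s|>T$, both asymptotics of a chain-homotopy strip lie in $L_0\cap L_1$, so the output generator $q$ in the definition of $K$ should be taken in $L_0\cap L_1$, not $L_0'\cap L_1'$.
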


Let $K_G: CF_G^{\bullet} (L_0, L_1) \to CF_G^{\bullet} (L_0, L_1)$ be the map induced by $K$. We obtain
$$\Phi_G\circ \Psi_G - \op{id} = K_G\circ d_G + d_G\circ K_G.$$

Summarizing, we have:

\begin{cor}
The $G$-equivariant Lagrangian Floer cohomology $HF_G^{\bullet} (L_0, L_1)$ is independent of the choice of equivariant almost complex structure $J$ and is an invariant of the pair $(L_0, L_1)$ under $G$-equivariant Hamiltonian isotopy.
\end{cor}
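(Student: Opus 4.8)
The goal is to prove the final Corollary, which asserts that $HF_G^\bullet(L_0,L_1)$ does not depend on the equivariant almost complex structure $J$ and is invariant under $G$-equivariant Hamiltonian isotopy. Here is the plan.

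\medskip

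\textbf{Step 1: Reduce to the chain-level statements already in hand.} The plan is to observe that everything needed has already been assembled in Sections~\ref{subsection: chain map} and \ref{subsection: chain homotopy}: for a $G$-equivariant Hamiltonian isotopy $\phi_s$ we have $R[G]$-linear chain maps $\Phi\colon CF^\bullet(L_0,L_1)\to CF^\bullet(L_0',L_1')$ and $\Psi\colon CF^\bullet(L_0',L_1')\to CF^\bullet(L_0,L_1)$ (Lemma~\ref{lemma: chain map}), and an $R[G]$-linear chain homotopy $K$ with $\Phi\circ\Psi-\op{id}=K\circ d+d\circ K$ (Lemma~\ref{lemma: chain homotopy}), together with the analogous identity $\Psi\circ\Phi-\op{id}=K'\circ d+d\circ K'$ obtained by reversing the roles of the two pairs. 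Applying $\op{Hom}_{R[G]}(P_\bullet,-)$ (which is an exact functor of $R[G]$-linear chain complexes in the appropriate sense) to these data and passing to total complexes produces $\Phi_G,\Psi_G,K_G,K_G'$ with $\Phi_G\circ\Psi_G-\op{id}=K_G\circ d_G+d_G\circ K_G$ and $\Psi_G\circ\Phi_G-\op{id}=K_G'\circ d_G+d_G\circ K_G'$. Hence $\Phi_G$ and $\Psi_G$ are mutually inverse up to chain homotopy, so they induce isomorphisms on $HF_G^\bullet$. This already gives invariance under $G$-equivariant Hamiltonian isotopy.

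\medskip

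\textbf{Step 2: Independence of $J$.} I would treat the change of the equivariant $\omega$-compatible almost complex structure (satisfying \ref{condition: J}) as a special case of the continuation argument with $\phi_s=\op{id}$ for all $s$. Given two such structures $J^0$ and $J^1$, Lemma~\ref{lemma: family J} supplies a path $\{J^\tau\}$ through the same class of structures. Running the chain-map construction of Section~\ref{subsubsection: chain maps} with $\phi_s=\op{id}$ and the interpolating family $J^{\vartheta_0(s)}$ produces an $R[G]$-linear chain map between the two Floer complexes, and the chain-homotopy construction of Section~\ref{subsubsection: chain homotopies} (again with $\phi_s=\op{id}$) shows it is a homotopy equivalence; one also checks that composing the continuation map for $J^0\rightsquigarrow J^1$ with that for $J^1\rightsquigarrow J^0$ is homotopic to the identity by the same mechanism. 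Passing through $\op{Hom}_{R[G]}(P_\bullet,-)$ as in Step~1 gives the isomorphism on $HF_G^\bullet$. (If one also wants independence of the auxiliary orientation and perturbation data $\mathfrak S$, that follows because different generic choices are connected by paths, and Theorem~\ref{thm: equivariant count} together with the cobordism invariance of signed counts shows the differential, and hence $HF_G^\bullet$, is unchanged; but this is subsumed in the continuation argument.)

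\medskip

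\textbf{Step 3: Well-definedness of the composite isomorphisms / coherence.} The one point requiring a little care is that the isomorphism on $HF_G^\bullet$ should be canonical enough that, e.g., $\Phi$ followed by $\Psi$ for a homotopic family of isotopies gives the identity up to homotopy; this is the usual ``homotopy of homotopies'' argument and can be run exactly as for the non-equivariant theory, using a two-parameter version of the Kuranishi family of Section~\ref{subsubsection: chain homotopies} (a family over $[0,1]^2$ instead of $[0,1]$), all constructed $G$-equivariantly, and then applying $\op{Hom}_{R[G]}(P_\bullet,-)$. I expect this bookkeeping — keeping track of $G$-equivariance of the higher-parameter Kuranishi families and of the compatibility of the sections $\mathfrak S$ under all the restriction-inclusion and gluing morphisms while the count remains $G$-invariant via Theorem~\ref{thm: equivariant count} — to be the main obstacle, though it is entirely parallel to the $m=2,3$ analysis already carried out in the proof of Theorem~\ref{thm: equivariant count} and to the standard non-equivariant arguments. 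Since the statement of the Corollary only asserts the existence of the invariant and its invariance under isotopy, Steps~1 and 2 suffice; Step~3 is what one needs if one wants the isomorphisms themselves to be canonical.
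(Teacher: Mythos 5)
Your proposal is correct and follows essentially the same route as the paper: the corollary is deduced directly from Lemmas~\ref{lemma: chain map} and \ref{lemma: chain homotopy} (and their counterparts with the roles of $(L_0,L_1)$ and $(L_0',L_1')$ reversed) by applying $\op{Hom}_{R[G]}(P_\bullet,-)$ and passing to total complexes, with independence of $J$ subsumed in the continuation construction of Section~\ref{subsubsection: chain maps} since the chain maps are already built from an interpolating family $\{J^s\}$. Your Step~3 correctly identifies extra coherence one could ask for, and correctly notes it is not required for the statement as given.
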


\end{document}